\tikzstyle arrowstyle=[scale=1]
\tikzstyle directed=[postaction={decorate,decoration={markings,mark=at position .65 with {\arrow[arrowstyle]{stealth}}}}]
\newtheorem{theorem}{Theorem}[section]
\newtheorem{lemma}[theorem]{Lemma}
\newtheorem{corollary}[theorem]{Corollary}
\newtheorem{remark}[theorem]{Remark}
\newtheorem{example}[theorem]{Example}
\newtheorem{definition}[theorem]{Definition}
\def\O{{\cal O}}
\def\o{{\mathfrak o}}
\def\ZZ{{\mathbb{Z}}}
\def\CP{{\mathcal P}}
\def\CQ{{\mathcal Q}}
\def\CL{{\mathcal L}}
\def\CD{{\mathcal D}}
\def\CK{{\mathcal K}}
\newcommand \Jac {\mathop{\rm Jac}}
\newcommand \nin {\not \in}
\begin{document}
%\begin{abstract}
%In this paper we consider the critical group of finite connected graphs which admit harmonic actions by the dihedral group $D_n$, extending earlier work by the author and Criel Merino.  In particular, we show that the critical group of such a graph can be decomposed in terms of the critical groups of the quotients of the graph by certain subgroups of the automorphism group.  This is analogous to a theorem of Kani and Rosen which decomposes the Jacobians of algebraic curves with a $D_n$-action.
%\end{abstract}
%
%\begin{keyword}
%Graph Jacobians \sep Automorphism Groups \sep Algebraic Curves \MSC[2010]{05E18, 05C50, 14H37}
%\end{keyword}
%
%\maketitle

\begin{center}
{\bf Critical Groups of Graphs with Dihedral Actions II}

\vskip .25in

{\bf Darren B Glass}\\
{\small Department of Mathematics, Gettysburg College, Gettysburg PA 17325}\\
{\tt dglass@gettysburg.edu}\\
\end{center}

\noindent {\bf Abstract} 

In this paper we consider the critical group of finite connected graphs which admit harmonic actions by the dihedral group $D_n$, extending earlier work by the author and Criel Merino.  In particular, we show that the critical group of such a graph can be decomposed in terms of the critical groups of the quotients of the graph by certain subgroups of the automorphism group.  This is analogous to a theorem of Kani and Rosen which decomposes the Jacobians of algebraic curves with a $D_n$-action.

\section{Introduction}

This note  picks up where the author's previous article with Criel Merino \cite{GM} left off.  In particular, that article added to the growing literature (see, for example: \cite{BN1},\cite{Corry1},\cite{Primer}) exploring the analogy between the Jacobians of curves and the Jacobians of graphs, also known as critical groups.  Those papers, and others in the literature, prove theorems about Jacobians of graphs that are equivalent to theorems from algebraic geometry such as the Riemann-Roch Theorem and the Hurwitz bound on the size of the automorphism group.  Our article looked at a theorem of Kani and Rosen \cite{KR} that shows a relationship between the Jacobians of curves that admit certain group actions and the Jacobians of the quotients of that curve and explored whether their theorem carried over to the graph theoretic setting.  In particular, we showed how to decompose the Jacobian of a graph that had a harmonic action by the dihedral group $D_n$ in terms of the Jacobians of its quotients.  However, our results required the additional hypothesis that the $D_n$-orbits of the vertices each had precisely $n$ or $2n$ elements.  This hypothesis is very restrictive, and in this note, we remove that condition and prove the following theorem about Jacobians of graphs that admit a harmonic action of $D_n$ independent of the size of the orbits.

\begin{theorem}\label{T:Main}
Let $G$ be a graph admitting a harmonic action of the dihedral group $D_n$ generated by the involutions $\sigma_1$ and $\sigma_2$, and define an orbit of the vertices to be inertial if any element of the orbit is fixed by either $\sigma_1$ or $\sigma_2$.  For all primes $p \nmid 2n$, we have that the $p$-Sylow subgroups of the two groups \[\Jac(G/\sigma_1) \oplus \Jac(G/\sigma_2) \oplus \Jac(G/\sigma_1\sigma_2)\oplus \ZZ/n\ZZ\] \noindent and \[\Jac(G) \oplus \Jac(G/D_n)^2\oplus \left(\bigoplus_{\O \text{ inertial}}( \ZZ/\frac{n}{|\O|}\ZZ)\right)\]
\noindent are isomorphic.  If $p|2n$ then these $p$-Sylow subgroups have the same order but may not be isomorphic.
\end{theorem}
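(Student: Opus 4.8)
Throughout write $\tau=\sigma_1\sigma_2$, so that $\langle\tau\rangle$ is the cyclic rotation subgroup of order $n$ and $G/\sigma_1\sigma_2=G/\langle\tau\rangle$, and let $A$ and $B$ denote the first and second groups displayed in the statement. The plan is to pass from critical groups to cokernels of Laplacians and to exploit the idempotent identity underlying the Kani--Rosen decomposition \cite{KR}. For a subgroup $K\le D_n$ put $\varepsilon_K=\frac{1}{|K|}\sum_{k\in K}k$; the identity I want is
\[\varepsilon_{\langle\sigma_1\rangle}+\varepsilon_{\langle\sigma_2\rangle}+\varepsilon_{\langle\tau\rangle}=\varepsilon_{\{1\}}+2\varepsilon_{D_n}\]
in $\Q[D_n]$, which I would verify by pairing each side with an arbitrary irreducible character, i.e.\ by checking
\[\dim\rho^{\langle\sigma_1\rangle}+\dim\rho^{\langle\sigma_2\rangle}+\dim\rho^{\langle\tau\rangle}=\dim\rho+2\dim\rho^{D_n}\]
for every irreducible representation $\rho$ of $D_n$; this is a short case analysis over the one- and two-dimensional irreducibles, treating $n$ odd and $n$ even separately. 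The proof then splits according to whether $p\nmid 2n$ or $p\mid 2n$, and the organizing remark is that every finite correction group in the statement is annihilated by $n$. Hence for $p\nmid 2n$ the summands $\ZZ/n\ZZ$ and $\ZZ/\frac{n}{|\O|}\ZZ$ have trivial $p$-Sylow subgroup and disappear, whereas for $p\mid 2n$ I will only need to match orders.

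For a prime $p\nmid 2n$ I would localize everything at $p$. Then $|D_n|=2n$ is a unit, $\ZZ_{(p)}[D_n]$ contains every $\varepsilon_K$, and Maschke's theorem splits the finite $\ZZ_{(p)}[D_n]$-module $\Jac(G)_{(p)}$ into isotypic components. The lemma to establish is that for each relevant $K$,
\[\Jac(G/K)_{(p)}\ \cong\ \bigl(\Jac(G)_{(p)}\bigr)^{K}=\varepsilon_K\,\Jac(G)_{(p)}.\]
I would prove this with the pullback and pushforward maps attached to the harmonic quotient $\pi_K\colon G\to G/K$ in the manner of \cite{GM}: harmonicity yields $\pi_{K,*}\pi_K^{*}=\deg\pi_K$ and $\pi_K^{*}\pi_{K,*}=\sum_{k\in K}k$, while a local-degree computation gives $\deg\pi_K=|K|$ (the multiplicity at a vertex equals the order of its stabilizer, so orbit--stabilizer makes the fibre degrees sum to $|K|$). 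As $|K|$ divides $2n$ and is a unit at $p$, these relations make $\pi_K^{*}$ and $\frac{1}{|K|}\pi_{K,*}$ mutually inverse isomorphisms onto the $K$-invariants. Feeding this into the displayed idempotent identity and comparing isotypic multiplicities through the character identity above gives an isomorphism $A_{(p)}\cong B_{(p)}$, since the deleted correction terms contribute nothing at $p$; this is the asserted isomorphism of $p$-Sylow subgroups.

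For $p\mid 2n$ the idempotents leave $\ZZ_{(p)}[D_n]$ and Maschke fails, so instead I would prove the single global identity $|A|=|B|$; together with the isomorphisms already obtained at every $p\nmid 2n$ this forces the remaining $p$-Sylow subgroups to have equal order. To compute $|A|$ and $|B|$ I would work over $\Q$, where the idempotents always exist, and block-diagonalize the $D_n$-equivariant Laplacian $L_G$ along the isotypic decomposition of $\Q^{V(G)}$. The restriction of $L_G$ to the $K$-invariants is, up to the diagonal scaling by local degrees (stabilizer orders) inherent in the harmonic quotient, the Laplacian computing $\Jac(G/K)$; invoking the matrix--tree theorem in the form $|\Jac(X)|\,|V(X)|=\prod_{\lambda\neq0}\lambda$ turns the isotypic block determinants into the quotient orders. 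The Kani--Rosen identity makes the generic blocks cancel between $A$ and $B$, the zero-eigenspaces balance because each connected quotient contributes a single kernel vector and $1+1+1=1+2$, and what survives is exactly a product of local-degree determinants. Matching this residue against the orders $n$ and $\prod_{\O}\frac{n}{|\O|}$ of the correction groups, orbit by orbit, completes the order computation.

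The step I expect to be the main obstacle is precisely this residual local-degree bookkeeping, together with the normalization $\deg\pi_K=|K|$ feeding the main lemma. Both depend on pinning the local degrees of the harmonic morphisms to the stabilizer orders for orbits of \emph{arbitrary} size---exactly the generality that \cite{GM} sidestepped by assuming all vertex orbits have $n$ or $2n$ elements. Once the multiplicities are identified, the argument at $p\nmid 2n$ is clean because those multiplicities are units; the real difficulty is at $p\mid 2n$, where the correction groups $\ZZ/n\ZZ$ and $\ZZ/\frac{n}{|\O|}\ZZ$ must be produced on the nose from the determinants of the local-degree scalings, with the inertial orbits (those whose stabilizer contains a reflection, so $|\O|\mid n$) supplying the factors $\frac{n}{|\O|}$.
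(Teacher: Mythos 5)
Your proposal is correct in outline, but it reaches Theorem \ref{T:Main} by a genuinely different route than the paper. The paper never leaves the divisor-theoretic setting: it feeds $\Jac(G)$ and $\bigoplus_i\Jac(H_i)$ into the exact sequences of Theorem \ref{T:Exact} and then computes the three correction groups $\CD/\CP$ (Theorem \ref{T:DP}), $\CK$ (Theorem \ref{T:K}) and $\CL/\CL'$ (Theorem \ref{T:LL'}) by an explicit orbit-by-orbit analysis of which divisors are sums of pullbacks, finishing with the bookkeeping of $2n$-equivalence. You instead localize: at $p\nmid 2n$ you use the relations $\pi_{K,*}\pi_K^*=|K|$ and $\pi_K^*\pi_{K,*}=\sum_{k\in K}k$ to identify $\Jac(G/K)_{(p)}$ with the invariants $\bigl(\Jac(G)_{(p)}\bigr)^K$, then let the dihedral character identity distribute isotypic pieces; at $p\mid 2n$ you only match orders by an equivariant determinant computation. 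Your route is much closer to Kani--Rosen's original argument and avoids essentially all of the paper's case analysis (Types I/II/III, parity of $n$ and of $\kappa$) at the good primes; the paper's route produces explicit finite correction groups and exact sequences that carry structural information even at the bad primes, which is what its intermediate theorems and examples exploit.

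Two points need repair or completion. First, your displayed identity is \emph{false} as an equality in $\Q[D_n]$: already for $n=3$ the left side has coefficient $\tfrac12$ on $\sigma_1$ and no term at the third reflection, while the right side has coefficient $\tfrac13$ at every reflection. What is true---and what your character computation actually verifies---is the Kani--Rosen equivalence: both sides have the same trace in every representation, since $\mathrm{tr}_V(\varepsilon_K)=\dim V^K$. That is all your isotypic-multiplicity argument uses (together with the standard fact that finite modules over $\ZZ_{(p)}[D_n]$, $p\nmid 2n$, are determined by isotypic data and that invariants scale these data by $\dim V^K$), so the proof survives, but the claim must be stated as a character identity rather than an identity of group-ring elements. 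Second, the local-degree bookkeeping you flag as the main risk does close, and it is worth recording why. If $S$ is the $0$--$1$ matrix with $S_{v,w}=1$ exactly when $v$ lies over $w\in V(G/K)$, harmonicity gives $S^tL_GS=|K|\,L_{G/K}$, from which the product of the nonzero eigenvalues of $L_G$ on the $K$-invariant subspace equals $|\Jac(G/K)|\cdot\frac{|V(G)|}{|K|}\cdot\prod_w s_w$, where $s_w$ are the vertex stabilizer orders. Inserting this into the eigenvalue identity forced by the character relation, the powers of $|V(G)|$ cancel and the residue is a ratio of stabilizer products, computed orbit by orbit: a Type III orbit contributes $\bigl(\tfrac{2n}{|\O|}\bigr)^2$ to both sides and cancels, while an inertial orbit contributes $\bigl(\tfrac{2n}{|\O|}\bigr)^2$ against $2^{f_1+f_2}\cdot\tfrac{n}{|\O|}$, where $f_i$ is the number of $\sigma_i$-fixed vertices of $\O$. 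The identity therefore comes down to the fact that every inertial orbit, of arbitrary index, has exactly two reflection-fixed vertices in total ($f_1+f_2=2$; this is the content of Remark \ref{R:suborbit}), and that $\langle\tau\rangle$ acts transitively on inertial orbits but splits non-inertial ones in two. These are the facts your sketch leaves implicit and would have to be proved to complete the $p\mid 2n$ case.
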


We believe this theorem is of  interest largely because it helps strengthen the connection between the study of Jacobians of graphs and Jacobians of curves.  Moreover, it is computationally useful because the quotient graphs $G/\sigma_1,G/\sigma_2$, and  $G/\sigma_1\sigma_2$  all have fewer vertices than $G$, and therefore computing their Jacobians directly from the Laplacian matrix will be faster.  When $n$ is odd we will get a further efficiency from the fact that $\sigma_1$ and $\sigma_2$ are conjugate elements and thus  $\Jac(G/\sigma_1) \cong \Jac(G/\sigma_2)$.  In particular, given that best algorithms for computing the critical group of a graph with $k$ vertices take somewhat less than $O(k^3)$ time \cite{KV}, this approach could speed up the computation by a factor of roughly $8$.

We note that when $n=2$, we are looking at the case where our graph admits a harmonic action of the Klein-Four group, and Theorem \ref{T:Main} simplifies as follows:

\begin{corollary}\label{T:Intro}
Let $K = \{id, \sigma_1,\sigma_2,\sigma_3\} \cong (\ZZ/2\ZZ)^2$  and let $G$ be a graph which admits a harmonic $K$-action so that there are exactly $\o$ points fixed by the entire group. Then for any prime $p \ne 2$, the $p$-part of the finite abelian group $\Jac(G) \oplus (\Jac(G/K))^2 $ is isomorphic to the $p$-part of the direct sum $\Jac(G/\sigma_1) \oplus \Jac(G/\sigma_2) \oplus \Jac(G/\sigma_3)$.  Moreover, if the $2$-part of $\Jac(G/\sigma_1) \oplus \Jac(G/\sigma_2) \oplus \Jac(G/\sigma_3)$ is of order $2^n$ then the $2$-part of $\Jac(G)\oplus (\Jac(G/K))^2 $ has order $2^{n-\o+1}$.
\end{corollary}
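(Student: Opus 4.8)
The plan is to obtain Corollary~\ref{T:Intro} as the specialization of Theorem~\ref{T:Main} to the Klein-four group, taking the dihedral group there to be $D_2=K$; the theorem's parameter then equals $2$, which should not be confused with the exponent $n$ occurring in the corollary's statement. First I would record the identifications that make this legitimate: $D_2$ is precisely $K$, its two generating involutions are $\sigma_1$ and $\sigma_2$, the product $\sigma_1\sigma_2$ is the remaining nontrivial element $\sigma_3$, and $D_n=K$, so that $G/\sigma_1\sigma_2=G/\sigma_3$ and $G/D_n=G/K$. With these substitutions the two groups of Theorem~\ref{T:Main} become $\Jac(G/\sigma_1)\oplus\Jac(G/\sigma_2)\oplus\Jac(G/\sigma_3)\oplus\ZZ/2\ZZ$ and $\Jac(G)\oplus(\Jac(G/K))^2\oplus\bigl(\bigoplus_{\O\text{ inertial}}\ZZ/\tfrac{2}{|\O|}\ZZ\bigr)$.

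The odd-prime statement is then immediate. For $p\neq 2$ both extra summands are $2$-groups: $\ZZ/2\ZZ$ plainly, and each inertial summand $\ZZ/\tfrac{2}{|\O|}\ZZ$ is either trivial or $\ZZ/2\ZZ$, since every inertial orbit has size dividing $2$. Hence these summands vanish upon passing to $p$-Sylow subgroups, and the isomorphism of $p$-parts guaranteed by Theorem~\ref{T:Main} reduces to exactly the asserted isomorphism between the $p$-part of $\Jac(G)\oplus(\Jac(G/K))^2$ and the $p$-part of $\Jac(G/\sigma_1)\oplus\Jac(G/\sigma_2)\oplus\Jac(G/\sigma_3)$.

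For the prime $2$ I would instead invoke the equal-order half of Theorem~\ref{T:Main}, after which the argument is pure bookkeeping of the orders of the extra summands. The summand $\ZZ/2\ZZ$ contributes one factor of $2$. On the other side, an inertial orbit $\O$ contributes nontrivially exactly when $\tfrac{2}{|\O|}=2$, i.e. when $|\O|=1$; an orbit of size one is a vertex fixed by all of $K$, and there are precisely $\o$ of these, while the inertial orbits of size $2$ contribute the trivial group. Writing $2^n$ for the order of the $2$-part of $\Jac(G/\sigma_1)\oplus\Jac(G/\sigma_2)\oplus\Jac(G/\sigma_3)$ and equating $2$-part orders gives $2^{n}\cdot 2=\bigl|\,2\text{-part of }\Jac(G)\oplus(\Jac(G/K))^2\,\bigr|\cdot 2^{\o}$, so that the $2$-part of $\Jac(G)\oplus(\Jac(G/K))^2$ has order $2^{\,n-\o+1}$, as claimed.

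The one point requiring genuine care, and the step I expect to be the main obstacle, is the dictionary between fixed points and the nontrivial torsion summands: verifying that the inertial orbits of size one are exactly the $\o$ vertices fixed by the whole group $K$, so that none are over- or under-counted, and confirming that for the action of $D_2$ every inertial orbit has size dividing $2$, so that $\tfrac{2}{|\O|}$ is a well-defined integer equal to $1$ or $2$. Once this correspondence is nailed down, both assertions of the corollary drop straight out of Theorem~\ref{T:Main}.
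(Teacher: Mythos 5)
Your proposal is correct and follows essentially the same route as the paper: the paper likewise obtains the corollary by specializing the main theorem to $n=2$ (via Theorem \ref{T:Equiv} in Section 6.1), observing that the only inertial orbits contributing nontrivial summands $\ZZ/\tfrac{2}{|\O|}\ZZ$ are the $\o$ singleton orbits fixed by all of $K$, and then equating $2$-part orders exactly as you do.
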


In the next section we introduce notation that we will use in our proof and discuss the general structure of our argument, which is very similar to the techniques used in \cite{GM}.  The following sections prove some technical results about sets of divisors on graphs that admit harmonic dihedral actions.  Section \ref{S:summary} combines these results in order to describe the Jacobian of our graph in terms of the Jacobians of its quotients.   A final section gives additional examples, including a proof of Corollary \ref{T:Intro}.

\section{Notation and Structure}\label{S:Notation}

Throughout this paper, we assume that $G$ is a graph that admits a harmonic action of the dihedral group $D_n$ generated by two involutions $\sigma_1$ and $\sigma_2$  and we set $\tau = \sigma_1\sigma_2$ so that $\tau$ is an element of order $n$.  We recall that the action is harmonic if anytime an element of the group fixes an edge it switches the two vertices that are endpoints of that edge.  Given a $D_n$-orbit $\O$ of vertices on the graph $G$, we wish to define the following two invariants: the type and the index.
\begin{definition}
For each orbit $\O$ of the vertices of $G$ under a harmonic $D_n$-action, we define it to be either Type I, II, or III as follows:
\begin{itemize}
\item A Type I orbit is an orbit $\O$ so that $\sigma_2$ fixes some element of $\O$.  We let $t_1$ be the number of orbits of Type I.
\item A Type II orbit is an orbit $\O$ so that $\sigma_2$ does NOT fix any element, but $\sigma_1$ does fix an element of $\O$. We let $t_2$ be the number of orbits of Type II.
\item A Type III orbit is be an orbit $\O$ so that neither $\sigma_1$ or $\sigma_2$ fix any elements of $\O$. We let $t_3$ be the number of orbits of Type III.
\end{itemize}
\end{definition}

Going along with the definition given in Theorem \ref{T:Main}, we will refer to Type I and Type II orbits jointly as `inertial', and we note that these are orbits on which the subgroup $\langle \tau \rangle$ acts transitively.  Similarly, we refer to Type III orbits as `non-inertial' and observe that the subgroup $\langle \tau \rangle$ splits these orbits into two suborbits of equal size.  In particular, if an orbit is inertial then the order of the orbit must be a divisor of $n$, and we define the index of $\O$ to be $k_\O= n/|\O|$.  On the other hand, the order of a non-inertial orbit will be twice a divisor of $n$, and we define the index to be $k_\O=2n/|\O|$.    When it is clear which orbit we are talking about, we will often drop the subscripts and just write $k$ for the index.  For a given graph $G$, we will find it useful to set $\kappa$ to be the least common multiple of the indices $k_\O$.

We note that if $n$ is odd then there are no Type II orbits, as $\sigma_1$ and $\sigma_2$ are conjugates of one another, and in particular $\tau^i\sigma_1  = \sigma_2\tau^i$ for some $i$.  Thus, if $\sigma_1(v)=v$ then $\sigma_2(\tau^i(v)) = \tau^i(v)$, meaning that $\sigma_2$ fixes an element in the same orbit as $v$.  A similar argument holds showing that there are no Type II orbits when $n$ is even and $\O$ is an orbit so that $n/k_\O$ is odd.  Moreover, without loss of generality we assume that if there are any Type II orbits then there are also Type I orbits; otherwise, we switch the labels of $\sigma_1$ and $\sigma_2$.

\begin{remark}\label{R:suborbit}
In what follows, it will be helpful to set the following notation.
\begin{itemize}
\item Any Type I orbit of index $k$ can be denoted by the set $\{z_i\}_{i=1}^{n/k}$ where $\sigma_1(z_i)=z_{n/k+1-i}$ and $ \sigma_2(z_i)=z_{n/k+2-i}$, where the subscripts should all be taken mod $n/k$.  In particular, it will have one fixed point under each involution  if $\frac{n}{k}$ is odd and two under $\sigma_2$ and none under $\sigma_1$ if $\frac{n}{k}$ is even.
\item Any Type II orbit of index $k$ can be denoted by the set $\{w_i\}_{i=1}^{n/k}$ where $\sigma_1(w_i)=w_{n/k-i}$ and $\sigma_2(w_i)=w_{n/k+1-i}$ with the subscripts again being taken mod $n/k$.  In particular, $\sigma_2$ will not fix any points while $\sigma_1$ will fix two points.
\item Any Type III orbit can be split into two suborbits $\{x_i\}_{i=1}^{n/k}, \{y_i\}_{i=1}^{n/k}$, where  $\sigma_1(x_i)=y_{n/k +1-i}$ and $\sigma_2(x_i) = y_{n/k +2-i}$, so the reflections take elements of one suborbit to the other, and the `rotations' generated by $\tau=\sigma_1\sigma_2$ fix each of the suborbits.
\end{itemize}
We illustrate this notation in Figure \ref{F:orbits}, which shows graphs admitting $D_3$ and $D_4$ actions.  In each case, $\sigma_1$ denotes reflection in the vertical axis and $\sigma_2$ is reflection in the diagonal line indicated.  The graphs each have one orbit of each Type, all of index one and labeled with the above notation.

\begin{figure}[!htbp]
\centering
\subfloat[Graph with $D_3$ action]{\begin{tikzpicture}
  [scale=1,auto=left,every node/.style={state,minimum size=.6cm,fill=blue!20}]
  \node(w1) at (.86,.5) {$w_1$};
  \node(w2) at (0,-1){$w_2$};
  \node(w3) at (-.86,.5){$w_3$};
  \node(x1) at (1.1,2.2) {$x_1$};
  \node(y1) at (2.5,0){$y_1$};
  \node(x2) at (1,-2.2){$x_2$};
  \node(y2) at (-1,-2.2) {$y_2$};
  \node(x3) at (-2.5,0){$x_3$};
  \node(y3) at (-1.1,2.2){$y_3$};
  \foreach \from/\to in {w1/w2,w2/w3,w3/w1,w1/x1,w1/y1,x1/y3,w2/x2,w2/y2,x2/y1,w3/x3,w3/y3,x3/y2}
    \draw (\from) -- (\to);
      \draw[dotted] (0,2.7) -- (0,-2.7) ;
    \draw[dotted] (-2.5,-1.5) -- (2.5,1.5);
\end{tikzpicture}} \qquad
\subfloat[Graph with $D_4$ action]{\begin{tikzpicture}
  [scale=1.2,auto=left,every node/.style={state,minimum size=.6cm,fill=blue!20}]
  \node(z1) at (1,1) {$z_1$};
  \node(z2) at (1,-1){$z_2$};
  \node(z3) at (-1,-1){$z_3$};
  \node(z4) at (-1,1) {$z_4$};
  \node(w1) at (1,0){$w_1$};
  \node(w2) at (0,-1){$w_2$};
  \node(w3) at (-1,0) {$w_3$};
  \node(w4) at (0,1){$w_4$};
  \node(x1) at (.5,2) {$x_1$};
  \node(x2) at (2,-.5){$x_2$};
  \node(x3) at (-.5,-2){$x_3$};
  \node(x4) at (-2,.5) {$x_4$};
  \node(y1) at (2,.5){$y_1$};
  \node(y2) at (.5,-2){$y_2$};
  \node(y3) at (-2,-.5) {$y_3$};
  \node(y4) at (-.5,2){$y_4$};
  \foreach \from/\to in {x1/y1,y1/x2,x2/y2,y2/x3,x3/y3,y3/x4,x4/y4,y4/x1, z1/w1,w1/z2,z2/w2,w2/z3,z3/w3,w3/z4,z4/w4,w4/z1,w4/x1,w3/x4,w2/x3,w1/x2,w1/y1,w2/y2,w3/y3,w4/y4}
    \draw (\from) -- (\to);
      \draw[dotted] (0,2.5) -- (0,-2.5) ;
    \draw[dotted] (-1.8,-1.8) -- (1.8,1.8);
\end{tikzpicture}}
\caption{Graphs with harmonic actions}
\label{F:orbits}
\end{figure}
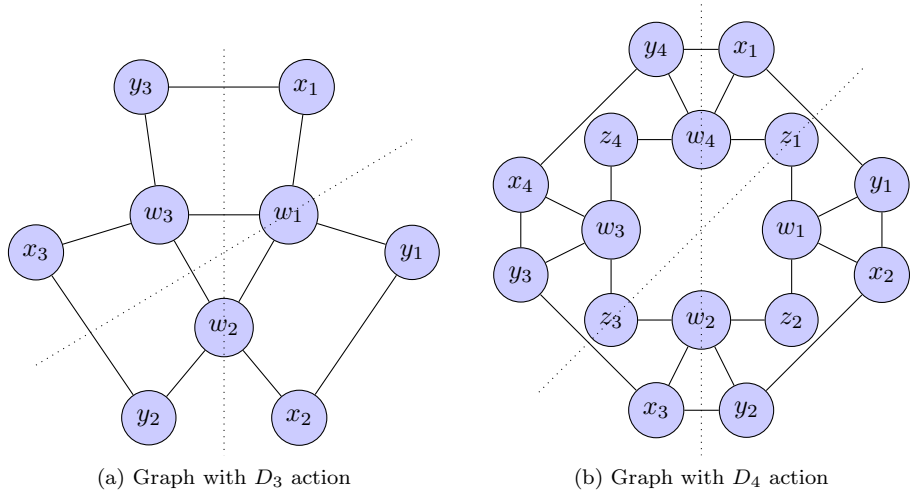

\end{remark}

\begin{example}\label{E:K44}
Throughout this note, we will consider the running example of the complete bipartite graph $K_{4,4}$, as pictured in Figure \ref{F:K44}.  While the Jacobian of this graph is easy to compute from first principles, we believe it will be useful to keep in mind when working through our results.  With vertices labelled as in the picture, we define involutions on the vertices by letting $\sigma_1 = (z_1 \, z_4) (z_2 \, z_3) (x \, y)$ and $\sigma_2 = (z_2 \, z_4)(w_1 \, w_2) (x \, y)$. One can compute that the product $\tau=\sigma_2\sigma_1 = (z_1 \, z_2 \, z_3 \, z_4) (w_1 \,w_2)$ has order four, and therefore the group generated by $\langle \sigma_1,\sigma_2 \rangle$ is isomorphic to the dihedral group $D_4$.

\begin{figure}[!htbp]
\begin{center}
\begin{tikzpicture}
  [scale=1,auto=left,every node/.style={state,minimum size=.8cm,fill=blue!20}]
  \node(x1) at (-1,1) {$z_4$};
  \node(x2) at (-1,2){$z_3$};
  \node(x3) at (-1,3){$z_2$};
  \node(x4) at (-1,4) {$z_1$};
  \node(y1) at (1,1){$w_2$};
  \node(y2) at (1,2){$w_1$};
  \node(z1) at (1,3) {$y$};
  \node(z2) at (1,4){$x$};
  \foreach \from/\to in {x1/y1,x1/y2,x1/z1,x1/z2,x2/y1,x2/y2,x2/z1,x2/z2,x3/y1,x3/y2,x3/z1,x3/z2,x4/y1,x4/y2,x4/z1,x4/z2}
    \draw (\from) -- (\to);
\end{tikzpicture}
\caption{$D_4$ action on $K_{4,4}$}
\label{F:K44}
\end{center}
\end{figure}
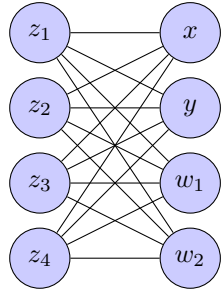

It is straightforward to check that this group action is harmonic. The vertices of $K_{4,4}$ have three orbits under this group action:
\begin{itemize}
\item $\{z_1,z_2,z_3,z_4\}$ is an orbit of Type I and index $k=1$.
\item $\{w_1,w_2\}$ is an orbit of Type II and index $k = 2$.
\item $\{x,y\}$ is an orbit of Type III and index $k= 4$.
\end{itemize}
Moreover, we see that $\kappa = lcm(1,2,4)=4$.
\end{example}

One consequence of a morphism $\phi:G \rightarrow H$ between graphs being harmonic is that for any vertex $w \in V(H)$ and $v \in V(G)$ so that $\phi(v)=w$ we have that all edges of $H$ adjacent to $w$ have the same number of preimages in $G$ that are adjacent to $v$; following Baker and Norine in \cite{BN2} we call this the horizontal multiplicity of $\phi$ at $w$ and denote it by $m_\phi(w)$.  Recalling that a divisor on a graph can be thought of as a function $\delta$ from the vertices of the graph to $\ZZ$, we define the pullback map from the set of divisors on $H$ to the set of divisors on $G$ by setting $\displaystyle \phi^*(\delta)=\sum_{w \in V(H)} \sum_{\substack{v \in V(G) \\ \phi(v)=w}} m_\phi(w) (\delta(w) )(v)$.

\begin{definition}\label{D:sets} Let $G$ be a graph that admits a harmonic $D_n$-action.  We define the following sets of divisors on the graph $G$:
\begin{itemize}
\item $\CD$ is the set of divisors $\delta$ so that $\sum_{v \in G} \delta(v)=0$.
\item $\CP_0$ is the set of divisors $\delta$ that can be viewed as pullbacks of divisors of degree zero on the quotient graph $\hat{G} = G/D_n $.  In particular, it follows from \cite[Lemma 3.2]{GM} that $\CP_0$ is the set of divisors of total degree zero whose values are constant on each orbit $\O$ and whose value on the vertices of $\O$ are all multiples of $2n/|\O|$.
\item For $i=1,2$ we set $\CP_i$  to be the set of divisors that can be viewed as pullbacks of divisors of degree zero on the quotient graph $H_i = G/\langle \sigma_i \rangle $.  In particular, $\CP_i$ will consist of divisors of degree zero so that $\delta(v)=\delta(\sigma_i(v))$ for all $v$ and $\delta(v)$ is even if $\sigma_i(v)=v$.
\item  $\CP_3$  is the set of divisors $\delta$ that can be viewed as pullbacks of divisors of degree zero on the quotient graph $H_3 = G/\langle \tau \rangle $ where $\tau=\sigma_1\sigma_2$. We have that $\delta \in \CP_3$ if and only if $\delta$ is of total degree zero, $\delta(v)$ is constant on all orbits of Type I and II  and constant on each of the $x_i$ and $y_i$ suborbits of any orbit of Type III and, moreover, that the value of $\delta(v)$ is a multiple of the index of the orbit containing $v$ for all vertices.
\item We set $\CP = \CP_1+\CP_2+\CP_3$.
\end{itemize}
\end{definition}

\begin{example}\label{E:PiK}
We illustrate these definitions by computing these sets for the bipartite graph with the $D_4$ action described in Example \ref{E:K44}.  Figure \ref{F:PiK} illustrates the symmetry conditions for the sets $\CP_i$; note that a divisor will be in the set $\CP_i$ exactly when it takes this form (where all of the letters represent integer values) and has total degree equal to zero.  Understanding the set $\CP = \CP_1+\CP_2+\CP_3$ is in general more subtle and will be the main topic of Section \ref{S:sums}.

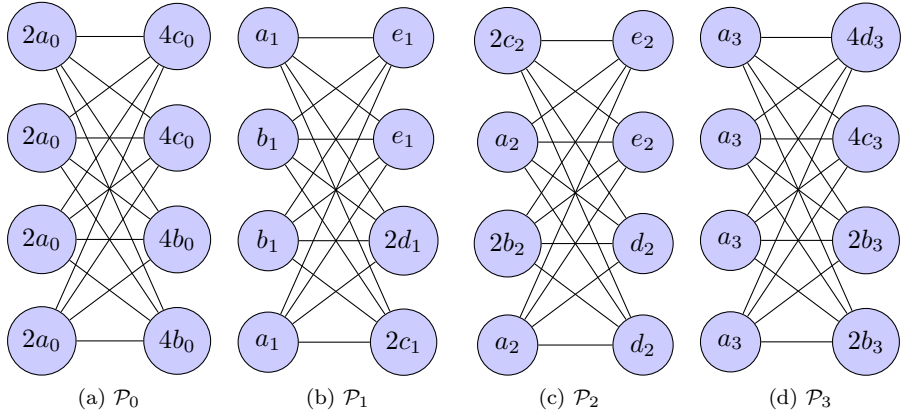
\begin{figure}[!htbp]
\centering
\subfloat[$\CP_0$]{\begin{tikzpicture}
  [scale=.9,auto=left,every node/.style={state,minimum size=.8cm,fill=blue!20}]
  \node(x1) at (-1,1) {$2a_0$};
  \node(x2) at (-1,2.5){$2a_0$};
  \node(x3) at (-1,4){$2a_0$};
  \node(x4) at (-1,5.5) {$2a_0$};
  \node(y1) at (1,1){$4b_0$};
  \node(y2) at (1,2.5){$4b_0$};
  \node(z1) at (1,4) {$4c_0$};
  \node(z2) at (1,5.5){$4c_0$};
  \foreach \from/\to in {x1/y1,x1/y2,x1/z1,x1/z2,x2/y1,x2/y2,x2/z1,x2/z2,x3/y1,x3/y2,x3/z1,x3/z2,x4/y1,x4/y2,x4/z1,x4/z2}
    \draw (\from) -- (\to);
\end{tikzpicture}} \quad
\subfloat[$\CP_1$]{\begin{tikzpicture}
  [scale=.9,auto=left,every node/.style={state,minimum size=.8cm,fill=blue!20}]
  \node(x1) at (-1,1) {$a_1$};
  \node(x2) at (-1,2.5){$b_1$};
  \node(x3) at (-1,4){$b_1$};
  \node(x4) at (-1,5.5) {$a_1$};
  \node(y1) at (1,1){$2c_1$};
  \node(y2) at (1,2.5){$2d_1$};
  \node(z1) at (1,4) {$e_1$};
  \node(z2) at (1,5.5){$e_1$};
  \foreach \from/\to in {x1/y1,x1/y2,x1/z1,x1/z2,x2/y1,x2/y2,x2/z1,x2/z2,x3/y1,x3/y2,x3/z1,x3/z2,x4/y1,x4/y2,x4/z1,x4/z2}
    \draw (\from) -- (\to);
\end{tikzpicture}}
 \quad
\subfloat[$\CP_2$]{
\begin{tikzpicture}
  [scale=.9,auto=left,every node/.style={state,minimum size=.8cm,fill=blue!20}]
  \node(x1) at (-1,1) {$a_2$};
  \node(x2) at (-1,2.5){$2b_2$};
  \node(x3) at (-1,4){$a_2$};
  \node(x4) at (-1,5.5) {$2c_2$};
  \node(y1) at (1,1){$d_2$};
  \node(y2) at (1,2.5){$d_2$};
  \node(z1) at (1,4) {$e_2$};
  \node(z2) at (1,5.5){$e_2$};
  \foreach \from/\to in {x1/y1,x1/y2,x1/z1,x1/z2,x2/y1,x2/y2,x2/z1,x2/z2,x3/y1,x3/y2,x3/z1,x3/z2,x4/y1,x4/y2,x4/z1,x4/z2}
    \draw (\from) -- (\to);
\end{tikzpicture}} \quad
\subfloat[$\CP_3$]{\begin{tikzpicture}
  [scale=.9,auto=left,every node/.style={state,minimum size=.8cm,fill=blue!20}]
  \node(x1) at (-1,1) {$a_3$};
  \node(x2) at (-1,2.5){$a_3$};
  \node(x3) at (-1,4){$a_3$};
  \node(x4) at (-1,5.5) {$a_3$};
  \node(y1) at (1,1){$2b_3$};
  \node(y2) at (1,2.5){$2b_3$};
  \node(z1) at (1,4) {$4c_3$};
  \node(z2) at (1,5.5){$4d_3$};
  \foreach \from/\to in {x1/y1,x1/y2,x1/z1,x1/z2,x2/y1,x2/y2,x2/z1,x2/z2,x3/y1,x3/y2,x3/z1,x3/z2,x4/y1,x4/y2,x4/z1,x4/z2}
    \draw (\from) -- (\to);
\end{tikzpicture}}

\caption{Generic elements of the sets $\CP_i$ for the $D_4$ action on $K_{4,4}$}
\label{F:PiK}
\end{figure}

\end{example}

\begin{definition}
In addition to understanding the sets of divisors, it is important to understand the operation of `firing vertices' that defines the Jacobian of a graph.  More precisely:
\begin{itemize}
\item For each vertex $v$ we define the divisor $\ell_v$ to be the divisor corresponding to `firing' at $v$.  In particular, $\ell_v(v)=-deg(v)$ while for all $w \ne v$ we set $\ell_v(w)$ equal to the number of edges between $v$ and $w$.  We define $\CL$ to be the set of divisors that can be written as integer linear combinations of the $\ell_v$, and note that $\Jac(G)$ is defined to be the quotient group $\CD/\CL$.
\item $\CL'$ is defined to be the set which is generated by those divisors on $G$ that are pullbacks of the divisors corresponding to firing vertices on $H_1, H_2$ and $H_3$.  We will explore this set more concretely in Section \ref{S:LL'}.
\end{itemize}
\end{definition}

In \cite{GM}, we show that we have the following inclusion of these groups

\[ \bfig \morphism(-100,100)/{^(->}/<400,0>[\CL'`\CP \cap \CL;]
\morphism(300,100)/{_(->}/<300,-300>[\CP \cap \CL`\CP;]
\morphism(300,100)/{^(->}/<300,300>[\CP \cap \CL`\CL;]
\morphism(600,-200)/{_(->}/<300,300>[\CP`\CP+\CL;]
\morphism(600,400)/{^(->}/<300,-300>[\CL`\CP+\CL;]
\morphism(900,100)/{^(->}/<400,0>[\CP+\CL`\CD;]
\efig \]

\noindent which allows us to use the isomorphism theorems in order to prove the following result:

\begin{theorem} \label{T:Exact}
We have the following exact sequences between the quotients of groups:
\[ 1 \rightarrow (\CP \cap \CL)/\CL' \rightarrow \CL/\CL' {\rightarrow} \CD/\CP \rightarrow \CD/(\CP + \CL) \rightarrow 1\]
\[ 1 \rightarrow (\CP \cap \CL)/\CL' \rightarrow \CP/\CL' {\rightarrow} \Jac(G) \rightarrow \CD/(\CP + \CL) \rightarrow 1\]
\[ 1 \rightarrow \CK \rightarrow \Jac(H_1) \oplus \Jac(H_2) \oplus \Jac(H_3) \rightarrow \CP/\CL' \rightarrow 1\]
\end{theorem}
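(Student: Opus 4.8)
The plan is to derive all three sequences formally from the lattice of inclusions displayed just above the statement, using only the standard isomorphism theorems together with the pullback description of $\CP$. In every case I take the connecting homomorphisms to be the maps induced by the evident inclusions of subgroups of $\CD$, so that the only real content is the identification of kernels and cokernels.

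For the first sequence I would use the map $\CL/\CL' \to \CD/\CP$ sending $x + \CL' \mapsto x + \CP$, which is well defined because $\CL' \subseteq \CP$. An element of $\CL$ dies in $\CD/\CP$ exactly when it lies in $\CP$, so the kernel is $(\CL \cap \CP)/\CL' = (\CP \cap \CL)/\CL'$, giving exactness at $\CL/\CL'$. Its image is $(\CL + \CP)/\CP$, and the third isomorphism theorem identifies the cokernel with $\CD/(\CP + \CL)$, which is precisely the final term; this yields exactness at $\CD/\CP$ and at $\CD/(\CP+\CL)$. The second sequence follows from the identical argument with the roles of $\CP$ and $\CL$ interchanged: the inclusion-induced map $\CP/\CL' \to \CD/\CL = \Jac(G)$ has kernel $(\CP \cap \CL)/\CL'$ and cokernel $\CD/(\CP + \CL)$, and the same kernel/cokernel bookkeeping applies verbatim.

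The third sequence requires genuine work with the pullback maps. Writing $\phi_i : G \to H_i$ for the three quotient morphisms, the pullback $\phi_i^*$ is additive and carries the degree-zero divisors of $H_i$ onto $\CP_i$ by the very definition of $\CP_i$, while it carries each firing divisor on $H_i$ into $\CL'$ by the definition of $\CL'$; hence $\phi_i^*(\CL_i) \subseteq \CL'$ and $\phi_i^*$ descends to a homomorphism $\Jac(H_i) \to \CP/\CL'$. Summing the three produces
\[ \Phi : \Jac(H_1) \oplus \Jac(H_2) \oplus \Jac(H_3) \to \CP/\CL', \qquad (\delta_1,\delta_2,\delta_3) \mapsto \phi_1^*(\delta_1) + \phi_2^*(\delta_2) + \phi_3^*(\delta_3) + \CL'. \]
Surjectivity of $\Phi$ is immediate from $\CP = \CP_1 + \CP_2 + \CP_3$, since every element of $\CP$ is already a sum of pullbacks from the three quotients. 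The group $\CK$ is then defined to be $\ker \Phi$, so that exactness holds by construction.

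The step I expect to be the main obstacle is not the exactness of the third sequence, which is formal, but the supporting verifications that make $\Phi$ well defined: one must check carefully that $\phi_i^*$ sends the Laplacian lattice of $H_i$ into $\CL'$ and the degree-zero divisors onto $\CP_i$, and this hinges on the horizontal-multiplicity bookkeeping for harmonic morphisms together with the explicit descriptions of the $\CP_i$ in Definition \ref{D:sets}. Pinning down $\CK$ explicitly is harder still, but since the present statement only asserts its existence as a kernel, that finer analysis can be deferred to the later sections.
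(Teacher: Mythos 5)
Your proposal is correct and takes essentially the same route as the paper: the first two sequences come from applying the isomorphism theorems to the inclusion lattice $\CL' \subseteq \CP \cap \CL \subseteq \CP, \CL \subseteq \CP + \CL \subseteq \CD$, and the third is exact by construction once $\Phi$ is seen to be well defined (pullbacks of $\CL_i$ lie in $\CL'$ by the definition of $\CL'$) and surjective (since $\CP = \CP_1 + \CP_2 + \CP_3$), with $\CK = \ker \Phi$. Your only overstatement is calling the well-definedness of $\Phi$ the ``main obstacle'': both required verifications hold essentially by the definitions of $\CP_i$ and $\CL'$, which is why the paper treats this theorem as formal and defers the real work to identifying the groups $\CD/\CP$, $\CK$, and $\CL/\CL'$ in later sections.
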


Our proof of this theorem is independent of the size of the orbits and therefore applies to the more general setting of this note.  In that paper we then use the additional hypotheses to prove specific results about the groups in these exact sequence.  We will follow a similar approach here, and subsequent sections will give explicit formulations for the finite abelian groups $\CD/\CP$ (Theorem \ref{T:DP}), $\CK$ (Theorem \ref{T:K}), and $\CL/\CL'$ (Theorem \ref{T:LL'}).  Along with the above exact sequences, these results determine the relationship between $\Jac(G)$ and $\Jac(H_1) \oplus \Jac(H_2) \oplus \Jac(H_3)$, allowing us to prove Theorem \ref{T:Main}.

\section{Sums of Pullbacks}\label{S:sums}

Our goal in this section is to understand the group $\CD/\CP$, and Theorem \ref{T:DP} will give a precise formula for this group.  To prove this theorem, we give a precise characterization of the divisors in $\CP$. We begin by considering the sum $\CP_1 + \CP_2$,  for which we need to define some auxiliary functions.

\begin{definition}
For each orbit $\O$ of vertices of $G$ under the $D_n$ action we define a function $F_\O: \{$divisors on $G\} \rightarrow \ZZ$ as follows:
\begin{itemize}
\item Let $\O = \{z_i\}$ denote a Type I  orbit of vertices of $G$ using the notation of Remark \ref{R:suborbit}.  Then $ F_\O(\delta) =  \sum_{i=1}^{n/k} 2i \delta(z_i)$.
\item If $\O=\{w_i\}$ is a Type II orbit then  $ F_\O(\delta) =  \sum_{i=1}^{n/k} (2i+1) \delta(w_i)$.
\item If $\O=\{x_i,y_i\}$ is a Type III orbit then  $F_\O(\delta) =  \sum_{i=1}^{n/k} 2i (\delta(x_i)+ \delta(y_i))$.
\end{itemize}
Note that we define $F_\O(\delta)$ as a function on the set of all divisors $\delta$ on $G$, but its value depends only on the value of the divisor at vertices in the orbit $\O$.
\end{definition}

We note the similarities between the function $F_\O$ and the function $\tau$ defined in \cite[Defn 2]{Cairns}.  The individual functions $F_\O$ can vary quite a bit, but it turns out that adding these together captures important global information about the divisor $\delta$, and we define the function $F(\delta) = \sum_\O F_\O(\delta)$.

\begin{example}
Returning to our running example, we consider a divisor $\delta_1$ on $K_{4,4}$ that is in the set $\CP_1$ as illustrated in Example \ref{E:PiK}.  Keeping in mind that we must have $a_1+b_1+c_1+d_1+e_1=0$, we compute that
\begin{eqnarray}
F(\delta_1) &=& \sum_{i=1}^{4} 2i \delta_1(z_i) + \sum_{i=1}^{2} (2i+1) \delta_1(w_i)+ 2(\delta_1(x)+ \delta_1(y))\\
&=& 2a_1+4b_1+6b_1+8a_1+3e_1+5e_1+4c_1+4d_1 \\
&=&10a_1+10b_1+8e_1+4c_1+4d_1 \\
&=&6a_1+6b_1+4e_1
\end{eqnarray}
which we note is an even number. Similarly, if $\delta_2 \in \CP_2$ we can compute that $F(\delta_2) = 8a_2+8b_2+4d_2$, which will also be even. Because the definition of $F_\O(\delta)$ is linear in $\delta$, it follows that any divisor $ \delta \in \CP_1 + \CP_2$ will also have that $F(\delta)$ is even.
\end{example}

More generally, we can prove the following result:

\begin{lemma}\label{L:global}
Let $G$ be a graph with a $D_n$ action and let $\kappa$ be the least common multiple of the indices $k_\O$ of the orbits of this action.  For every divisor $\delta \in \CP_1$  we have that $F(\delta)$ is a multiple of $2n/\kappa$.  The same result holds for divisors $\delta \in \CP_2$.
\end{lemma}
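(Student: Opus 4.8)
The plan is to compute $F_\O(\delta)$ orbit by orbit, exploiting the reflection symmetry that defines $\CP_1$, and then to assemble the global statement from the degree-zero condition. Throughout write $S_\O = \sum_{v \in \O}\delta(v)$, and recall that $\sum_\O S_\O = 0$ because $\delta$ has total degree zero. A divisor $\delta \in \CP_1$ satisfies $\delta(v) = \delta(\sigma_1(v))$ everywhere, with $\delta(v)$ even whenever $\sigma_1(v) = v$. On a Type I orbit $\{z_i\}$ this reads $\delta(z_i) = \delta(z_{n/k+1-i})$, and the involution $i \mapsto n/k+1-i$ of the index set lets me average $F_\O(\delta) = \sum_i 2i\,\delta(z_i)$ against its reindexed copy; the weights $2i$ collapse to the constant $n/k+1$, giving $F_\O(\delta) = (n/k+1)S_\O$. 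The identical averaging on a Type III orbit, using $\delta(x_i) = \delta(y_{n/k+1-i})$, again yields $F_\O(\delta) = (n/k+1)S_\O$. On a Type II orbit $\{w_i\}$ the relevant symmetry is $\delta(w_i) = \delta(w_{n/k-i})$, and here the reindexing $i \mapsto n/k-i$ wraps the boundary index $i = n/k$ onto itself with an anomalous weight; averaging therefore produces $F_\O(\delta) = (n/k+1)S_\O + (n/k)\,\delta(w_{n/k})$, where $w_{n/k}$ is one of the two $\sigma_1$-fixed vertices of the orbit.

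Two facts then close the argument. First, $S_\O$ is even for every orbit: the vertices not fixed by $\sigma_1$ pair off under the reflection and so contribute in pairs of equal value, while each $\sigma_1$-fixed vertex carries an even value by the definition of $\CP_1$ (for Type III one sees this at once from $S_\O = 2\sum_i \delta(x_i)$). The Type II correction $\delta(w_{n/k})$ is likewise even, being a value at a $\sigma_1$-fixed vertex. Second, since $k_\O \mid \kappa$ and $\kappa \mid n$, the integer $n/k_\O$ is a multiple of $n/\kappa$. Now sum over all orbits: splitting the coefficient $n/k_\O + 1$ into $n/k_\O$ and $1$, the `$+1$' parts contribute exactly $\sum_\O S_\O = 0$, while every surviving summand has the shape $(n/k_\O)\cdot(\text{even integer})$ and is therefore a multiple of $2\,(n/\kappa) = 2n/\kappa$. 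Hence $F(\delta)$ is a multiple of $2n/\kappa$.

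For $\delta \in \CP_2$ the same scheme runs with $\sigma_2$ in place of $\sigma_1$. The reflections are shifted by one (for instance $\delta(z_i) = \delta(z_{n/k+2-i})$ on Type I), so the averaging now produces the constant weight $n/k+2$ together with correction terms located at the $\sigma_2$-fixed vertices or, in the Type III case, carrying the manifestly even coefficient $2(n/k)$. One checks exactly as before that each $S_\O$ is even and that every correction is divisible by $2n/\kappa$, while the `$+2$' contributes $2\sum_\O S_\O = 0$. I expect the only real difficulty to be bookkeeping rather than ideas: one must track the index arithmetic modulo $n/k$ carefully for each of the three orbit types, pin down precisely which indices are self-paired under the reindexing (equivalently, which vertices are fixed by the relevant involution) so that the surviving correction terms are correctly identified, and verify the evenness of both $S_\O$ and those corrections in every case. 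Once these per-orbit identities are in hand, the global divisibility follows immediately from the degree-zero hypothesis together with $n/\kappa \mid n/k_\O$.
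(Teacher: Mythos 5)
Your proof is correct and follows essentially the same route as the paper: symmetrize $F_\O$ over the relevant reflection to get $F_\O(\delta)=(n/k_\O+c)S_\O$ up to correction terms supported at fixed vertices, then sum over orbits using the degree-zero condition, the evenness of each $S_\O$, and the divisibility $\frac{n}{\kappa}\mid\frac{n}{k_\O}$. In fact you are slightly more careful than the paper on one point: the paper asserts that the clean identity $F_\O(\delta)=(n/k_\O+1)\sum_{v\in\O}\delta(v)$ carries over verbatim to Type II orbits for $\delta\in\CP_1$, whereas with the paper's indexing (under which $w_{n/k}$ is $\sigma_1$-fixed) the correction term $(n/k_\O)\,\delta(w_{n/k})$ you identify genuinely appears — it is harmless, being $n/k_\O$ times an even value at a fixed vertex, but your bookkeeping is the precise version of the paper's ``the same conclusion holds'' sketch.
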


\begin{proof}
Let  $\delta \in \CP_1$  and let $\O$ be a Type I orbit, so that  $\delta(z_i) = \delta(z_{n/k_\O+1-i})$.  We then compute:
\begin{eqnarray*}
F_\O(\delta) &=&  \sum_{i=1}^{n/k_\O}2i \delta(z_i)\\
&=& \sum_{i=1}^{n/k_\O} \left(i \delta(z_i)+i \delta(z_{n/k_\O+1-i})\right)\\
&=& \sum_{i=1}^{n/k_\O} i \delta(z_i)+ \sum_{i=1}^{n/k_\O} i \delta(z_{n/k_\O+1-i})\\
&=& \sum_{i=1}^{n/k_\O} i \delta(z_i)+ \sum_{j=1}^{n/k_\O} (n/k_\O+1-j) \delta(z_j)\\
&=& \sum_{i=1}^{n/k_\O} \left(i \delta(z_i)+(n/k_\O+1-i) \delta(z_i)\right)\\
&=& \left(\frac{n}{k_\O}+1\right)\sum_{v \in \O}  \delta(v)
\end{eqnarray*}

\noindent where the equality at line three comes from setting $j=n/k_\O+1-i$.

Similarly, one can use the symmetry properties of Type II and Type III orbits to see that the same conclusion holds for those orbits as well.  Using these facts as well as the fact that $\delta$ must have total degree equal to zero, we then compute that
\begin{eqnarray*}
F(\delta)&=& \sum_\O \left((\frac{n}{k_\O} +1) \sum_{v \in \O}  \delta(v)\right)\\
&=&\sum_\O \left(\frac{n}{k_\O} \sum_{v \in \O}  \delta(v)\right) + \sum_{v \in G} \delta(v)\\
&=&\sum_\O \left(\frac{n}{k_\O} \sum_{v \in \O}  \delta(v)\right)\\
&=&\frac{n}{\kappa} \sum_\O \left(\frac{\kappa}{k_\O} \sum_{v \in \O}  \delta(v)\right)\\
\end{eqnarray*}

\noindent which is clearly a multiple of $n/\kappa$.  Moreover, for symmetry reasons we know that if $\delta \in \CP_1$ then for each orbit $\O$ we have that $\sum_{v \in \O}  \delta(v)$ will be even, which in turn implies that $\frac{\kappa}{k_\O} \sum_{v \in \O}  \delta(v)$ is even.   The desired result follows for divisors in $\CP_1$.  The argument is similar for divisors in $\CP_2$.
\end{proof}

We are now able to prove a result about the divisors that can be written as the sum of divisors in $\CP_1$ and $\CP_2$.

\begin{theorem}\label{T:P12}
 A divisor $\delta \in \CD$ will be in the set $\CP_1+\CP_2$ if and only if $\delta$ satisfies the following conditions:
\begin{enumerate}[(1)]
\item For each orbit $\O$, we have that $\sum_{v \in \O} \delta(v)$ is even.
\item If $\O = \{x_i\} \cup \{y_i\}$ is an orbit of Type III  then $\sum \delta(x_i)=\sum\delta(y_i)$.
\item $F(\delta)$ is a multiple of $2n/\kappa$, where $\kappa$ is defined as above.
\end{enumerate}
\end{theorem}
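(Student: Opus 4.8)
The plan is to prove the two implications separately, with the reverse direction carrying essentially all of the work. For necessity I would argue by linearity. Since $F$ is linear, Lemma \ref{L:global} immediately gives condition (3) for every $\delta \in \CP_1 + \CP_2$. Conditions (1) and (2) are read off from the symmetry of the generators: if $\delta_1 \in \CP_1$ then on each orbit the non-fixed vertices pair up under $\sigma_1$ while any $\sigma_1$-fixed vertex carries an even value, so $\sum_{v \in \O}\delta_1(v)$ is even; on a Type III orbit $\sigma_1$ interchanges the two suborbits, forcing $\sum\delta_1(x_i) = \sum\delta_1(y_i)$. The same holds for $\CP_2$, and both properties pass to sums.

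For sufficiency I would work one orbit at a time and then glue. Writing $\rho = \sigma_2\sigma_1$, the requirement that $\delta_1$ be $\sigma_1$-invariant with $\delta_2 = \delta - \delta_1$ being $\sigma_2$-invariant is equivalent, on a fixed orbit, to the single equation $(\rho - 1)\delta_1 = (\sigma_2 - 1)\delta$; concretely this unwinds to a telescoping recurrence for $\delta_1$ along the cycle generated by $\tau$ (or along each of the two suborbit cycles in the Type III case). Such a recurrence has an integer solution exactly when the relevant cyclic sum of the right-hand side vanishes, and this vanishing is precisely condition (1) on an inertial orbit and condition (2) on a Type III orbit. A short argument using $\sigma_1^2 = 1$ shows that any solution is automatically $\sigma_1$-invariant, so the only remaining local constraints are the parity conditions at the fixed points; the residual freedom of adding an orbit-constant to $\delta_1$ lets me arrange these, again exactly when condition (1) holds.

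At this stage I have a $\sigma_1$-invariant $\delta_1$ and a $\sigma_2$-invariant $\delta_2 = \delta - \delta_1$ with the correct parities, so $\delta_1$ and $\delta_2$ lie in $\CP_1$ and $\CP_2$ apart from possibly having nonzero total degree. Adding a global $D_n$-invariant divisor $\eta = \sum_\O c_\O\,\mathbf{1}_\O$ (with $c_\O$ even on inertial orbits, to preserve parity) changes $\deg\delta_1$ by an arbitrary element of $\gcd_\O(2n/k_\O)\,\ZZ = (2n/\kappa)\ZZ$, so I can force $\deg\delta_1 = 0$, and hence $\deg\delta_2 = 0$, exactly when $\deg\delta_1 \equiv 0 \pmod{2n/\kappa}$. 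I would then identify this congruence with condition (3): evaluating $F_\O$ on each invariant piece by the averaging identity of Lemma \ref{L:global}, the coefficient of the orbit-degree comes out as $n/k_\O + 1$ for the $\sigma_1$-invariant piece and $n/k_\O + 2$ for the $\sigma_2$-invariant piece on every orbit type, while all correction terms are multiples of $2n/\kappa$. Since these coefficients differ by exactly $1$, and $\sum_\O\deg(\delta|_\O) = 0$ with each $\deg(\delta|_\O)$ even, this yields $F(\delta) \equiv -\deg\delta_1 \pmod{2n/\kappa}$, so condition (3) is equivalent to solvability of the degree adjustment.

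I expect the main obstacle to be this last bookkeeping step: checking that on each of the three orbit types the per-orbit evaluation of $F_\O$ on a $\sigma_1$- or $\sigma_2$-invariant divisor really does reduce, modulo $2n/\kappa$, to the uniform relation with coefficients differing by one. The asymmetry built into $F_\O$ (the $2i$ versus $2i+1$ weights, together with the fact that $\sigma_2$ acts by the shifted reflection $i \mapsto n/k+2-i$ rather than $i \mapsto n/k+1-i$) means the wraparound corrections and the fixed-point contributions must be tracked carefully, and it is precisely here that the specific weights chosen in the definition of $F_\O$ are doing their work.
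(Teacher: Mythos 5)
Your architecture is, at bottom, the paper's own proof. Telescoping your recurrence $(\rho-1)\delta_1=(\sigma_2-1)\delta$ around each $\tau$-cycle produces exactly the explicit partial-sum divisors $\hat{\delta}_1,\hat{\delta}_2$ that the paper writes down; your degree-repair step (adding $D_n$-invariant divisors with even values on inertial orbits, whose degrees sweep out $(2n/\kappa)\ZZ$) is precisely the paper's adjustment by an element of $\CQ$; and your closing congruence $F(\delta)\equiv-\deg\delta_1 \pmod{2n/\kappa}$ is the paper's identity $\sum_{v\in\O}\hat{\delta}_1(v)=(\tfrac{n}{k_\O}+2)\sum_{v\in\O}\delta(v)-F_\O(\delta)$, summed over orbits and combined with condition (1). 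The necessity direction is identical. So you have reconstructed the paper's argument, just phrased through the recurrence rather than through closed-form sums.

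Two of your intermediate claims are false as stated, and one of them is a real (though repairable) gap. First, on an inertial orbit the cyclic obstruction to solving the recurrence is identically zero: the right-hand sides sum to $\sum_{v\in\O}\bigl(\delta(\sigma_2 v)-\delta(v)\bigr)=0$ because $\sigma_2$ permutes $\O$. So solvability there has nothing to do with condition (1); that condition enters only through the fixed-point parities and the degree bookkeeping. This is a misattribution, not a fatal error. Second, and more seriously, the claim that \emph{any} solution is automatically $\sigma_1$-invariant fails on Type III orbits. What is true is that $\sigma_1$ carries solutions to solutions: using $\sigma_1\tau=\sigma_2$ and $\tau\sigma_2=\sigma_1$ one checks that $\delta_1\circ\sigma_1$ satisfies the same recurrence. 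On an inertial orbit solutions differ by a single constant, and a $\sigma_1$-antisymmetric constant must vanish, so there invariance is indeed automatic. But on a Type III orbit solutions differ by an independent constant on each suborbit, and $\delta_1\circ\sigma_1-\delta_1$ has the form $(a,-a)$ on the two suborbits with $a$ not necessarily zero; if you do not fix this, $\delta_1\notin\CP_1$ and then $\delta-\delta_1$ is not $\sigma_2$-invariant either, so the decomposition collapses. The repair is to spend the \emph{relative} suborbit constant, $c_x-c_y=a$, to enforce $\sigma_1$-invariance; since Type III orbits contain no fixed points, no parity constraint competes for that freedom, and the remaining diagonal and inertial-orbit constants still suffice for your parity and degree adjustments. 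With these two corrections your proof closes and coincides with the paper's.
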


\begin{proof}
It is straightforward to check that that the first two conditions holds for all elements in $\CP_1$ and $\CP_2$.   The fact that the third condition holds for elements of $\CP_1$ and $\CP_2$ is the content of Lemma \ref{L:global}.  Moreover, these conditions are all linear and will therefore hold for all elements of $\CP_1+\CP_2$.  It follows that the conditions are necessary for a divisor to be in $\CP_1+\CP_2$.   To check that these conditions are also sufficient, let $\delta$ be a divisor satisfying all three of them.  We wish to define divisors $\delta_1 \in \CP_1$ and $\delta_2 \in \CP_2$ so that $\delta = \delta_1 + \delta_2$.

Using the notation above, we first define auxiliary divisors $\hat{\delta}_1$ and $\hat{\delta}_2$ on orbits of Type III as follows:

\begin{eqnarray*}
\hat{\delta}_1(x_i) &=& (\delta(x_1) + \dotsm + \delta(x_i)) - (\delta(y_{n/k})+\dotsm + \delta(y_{n/k+2-i}))\\
\hat{\delta}_1(y_i) &=& \hat{\delta}_1(x_{n/k+1-i}) \\
\hat{\delta}_2(x_i) &=& (\delta(y_{n/k})+\dotsm + \delta(y_{n/k+2-i}) - (\delta(x_1) + \dotsm + \delta(x_{i-1}))\\
\hat{\delta}_2(y_i) &=& \hat{\delta}_2(x_{n/k+2-i})
\end{eqnarray*}

We note that we need Condition (2) of the Lemma to hold in order for these terms to be well-defined.  It is clear by definition that $\hat{\delta}_1$ and $\hat{\delta}_2$ satisfy the required symmetry properties.  One can easily see that $\hat{\delta}_1(x_i)+\hat{\delta}_2(x_i)=\delta(x_i)$ and we compute:
\begin{eqnarray*}
\hat{\delta}_1(y_i)+\hat{\delta}_2(y_i) &=&\hat{\delta}_1(x_{n/k+1-i}) +\hat{\delta}_2(x_{n/k+2-i})\\
&=& \left(\sum_{j=1}^{n/k+1-i}\delta(x_j)- \sum_{j=i+1}^{n/k}\delta(y_j)\right)+ \left(\sum_{j=i}^{n/k}\delta(y_j)- \sum_{j=1}^{n/k+1-i}\delta(x_j)\right)\\
&=& \delta(y_i)
\end{eqnarray*}

For  inertial orbits, we define $\hat{\delta}_1$ and $\hat{\delta}_2$ as follows:

\begin{eqnarray*}
\hat{\delta}_1(z_i) &=& (\delta(z_1)+ \dotsm +\delta(z_i)) - (\delta(z_{n/k})+ \dotsm +\delta(z_{n/k+2-i}))\\
\hat{\delta}_1(w_i)&=& (\delta(w_1)+ \dotsm +\delta(w_i)) - (\delta(w_{n/k-1})+ \dotsm +\delta(w_{n/k+1-i}))\\
\hat{\delta}_2(z_i) &=& (\delta(z_{n/k})+ \dotsm +\delta(z_{n/k+2-i}))-(\delta(z_1)+ \dotsm +\delta(z_{i-1})) \\
\hat{\delta}_2(w_i) &=&(\delta(w_{n/k-1})+ \dotsm +\delta(w_{n/k+1-i})) - (\delta(w_1)+ \dotsm +\delta(w_{i-1}))\\
\end{eqnarray*}

The reader can check that  each $\hat{\delta}_i$ satisfies the required parity and symmetry conditions on all orbits in order to be in $\CP_i$, as well as the fact that the value of $\hat{\delta}_1+\hat{\delta}_2$ on these vertices agrees with the value of $\delta$.  Unfortunately, it may not be the case that the $\hat{\delta}_i$ have total degree zero, and therefore we must adjust them accordingly.  In particular, we look at the sum of the values of $\hat{\delta}_1$ on specific orbits and see the following:

\begin{alignat*}{4}
\O \text{ Type I:} & \sum_{v \in \O} \hat{\delta}_1(v)  &=& \sum_{i=1}^{n/k_\O} \left(\frac{n}{k_\O}+2-2i\right) \delta(z_i)  \\
\O \text{ Type II:} & \sum_{v \in \O} \hat{\delta}_1(v) &=& \sum_{i=1}^{n/k_\O} \left(\frac{n}{k_\O}+1-2i\right) \delta(w_i) \\
\O \text{ Type III:} & \sum_{v \in \O} \hat{\delta}_1(v) &=&  \sum_{i=1}^{n/k_\O} \left(\frac{n}{k_\O}+2-2i\right) \left(\delta(x_i) + \delta(y_i)\right) \\
\end{alignat*}

Thus, for each orbit $\O$ we have $\sum_{v \in \O} \hat{\delta}_1(v) =  (\frac{n}{k_\O}+2) \sum_{v \in \O}\delta(v) - F_\O(\delta)$, and it therefore follows that
\[\sum_{v \in G} \hat{\delta}_1(v) = 2 \sum_{v \in G} \delta(v) + \sum_\O\left(\frac{n}{k_\O} \sum_{v \in \O}\delta(v) - F_\O(\delta)\right)\]

Using Conditions (1) and (3) in the statement of the lemma, we can conclude that this is a multiple of $2n/\kappa$. Moreover, we note that $\sum_{v \in G} \delta(v) = 0$ so in particular $\sum_{v \in G} \hat{\delta}_1(v) = - \sum_{v \in G} \hat{\delta}_2(v)$.  Recall that, because $\sum_{v \in G} \hat{\delta}_1(v)$ is a multiple of $2n/\kappa$, there exists a divisor $\tilde{\delta} \in \CQ$  with total degree equal to $\sum_{v \in G} \hat{\delta}_1(v)$. Choose one such divisor and set $\delta_1 = \hat{\delta}_1-\tilde{\delta}$ and $\delta_2 = \hat{\delta}_2+\tilde{\delta}$.  One can easily verify that $\delta_1 \in \CP_1$ and $\delta_2 \in \CP_2$, and $\delta_1+\delta_2=\delta$, completing the proof.
\end{proof}

We note that in the case where the graph has any points that are fixed by all group elements there is an orbit of index $n$ and in particular, $\kappa=n$.  In this case, the third condition in Theorem \ref{T:P12} becomes vacuously true.  At the other extreme,  if all orbits have index $1$ then this result is equivalent to \cite[Thm 3.5]{GM}.

\begin{example}
Returning to Example \ref{E:K44}, we have that $n=\kappa=4$ and therefore we again see that the third condition in Theorem \ref{T:P12} follows immediately from the others.  In particular, applying this result to our example, we see that a divisor $\delta$ will be in $\CP_1+\CP_2$ if and only if its total degree is equal to zero and the following conditions all hold:
\begin{itemize}
\item $\delta(w_1)+\delta(w_2)+\delta(w_3)+\delta(w_4)$ is even,
\item $\delta(z_1)$ and $\delta(z_2)$ have the same parity, and
\item $\delta(x)=\delta(y)$
\end{itemize}
\end{example}

The functions $F_\O(\delta)$ are also well-behaved if $\delta \in \CP_3$, and the following lemma is a straightforward calculation from the definitions:

\begin{lemma}\label{L:F3}
Let $\delta$ be a divisor in $\CP_3$.
\begin{itemize}
\item On any orbit $\O$ of Type I, the divisor $\delta$ takes on a constant value of $a_\O k_\O$, and we have $F_\O(\delta) = na_\O (n/k_\O+1)$
\item On any orbit $\O$ of Type II, the divisor $\delta$ takes on a constant value of $a_\O k_\O$, and we have $F_\O(\delta) = na_\O (n/k_\O+2)$
\item On any orbit $\O$ of Type III, the divisor $\delta$ takes on a constant value of $a_\O k_\O$ (resp. $b_\O k_\O$) on the $x_i$ (resp. $y_i$) suborbit, and we can compute that $F_\O(\delta) = n(a_\O+b_\O) (n/k_\O+1)$
\end{itemize}
\end{lemma}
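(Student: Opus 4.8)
The plan is to carry out a direct computation for each of the three orbit types, substituting the explicit form of a divisor $\delta \in \CP_3$ guaranteed by Definition \ref{D:sets} into the definition of $F_\O$ and evaluating the resulting arithmetic sum. The only structural input I need is the characterization of $\CP_3$: on an inertial orbit the value of $\delta$ is constant and a multiple of the index $k_\O$, while on a Type III orbit $\delta$ is constant on each of the two suborbits and again a multiple of $k_\O$. This is exactly what lets me write $\delta(z_i) = a_\O k_\O$ on a Type I orbit, $\delta(w_i) = a_\O k_\O$ on a Type II orbit, and $\delta(x_i) = a_\O k_\O$, $\delta(y_i) = b_\O k_\O$ on a Type III orbit, which is precisely the constant-value assertion stated in each bullet of the Lemma.

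For the computation itself, I would first treat the Type I case. Substituting $\delta(z_i) = a_\O k_\O$ into $F_\O(\delta) = \sum_{i=1}^{n/k_\O} 2i\,\delta(z_i)$ pulls the constant out and leaves $2 a_\O k_\O \sum_{i=1}^{n/k_\O} i$; applying the identity $\sum_{i=1}^{m} i = m(m+1)/2$ with $m = n/k_\O$ and cancelling the factor $k_\O$ yields $n a_\O(n/k_\O + 1)$, as claimed. The Type III case is essentially identical, since $F_\O(\delta) = \sum_{i=1}^{n/k_\O} 2i(\delta(x_i)+\delta(y_i))$ and the bracketed term is the constant $(a_\O+b_\O)k_\O$, so the same sum produces $n(a_\O+b_\O)(n/k_\O+1)$.

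The only case requiring a slightly different sum is Type II, where $F_\O(\delta) = \sum_{i=1}^{n/k_\O}(2i+1)\delta(w_i)$ introduces the extra additive term $\sum_{i=1}^{n/k_\O} 1 = n/k_\O$. After factoring out $a_\O k_\O$ I would combine $\frac{n}{k_\O}\bigl(\frac{n}{k_\O}+1\bigr) + \frac{n}{k_\O} = \frac{n}{k_\O}\bigl(\frac{n}{k_\O}+2\bigr)$, which produces the predicted $n a_\O(n/k_\O+2)$ and explains why the Type II formula carries a $+2$ in place of the $+1$ appearing for the other two types.

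I do not expect any genuine obstacle here; the result is a bookkeeping exercise. The only points demanding care are remembering to factor the index $k_\O$ out of the constant value of $\delta$ before summing, so that the cancellation against $n/k_\O$ is transparent, and correctly handling the extra $+1$ in the Type II weights $(2i+1)$. Notably, no appeal to the total-degree-zero condition nor to any symmetry beyond constancy on the suborbits is needed, since each $F_\O$ depends only on the values of $\delta$ at vertices within the single orbit $\O$.
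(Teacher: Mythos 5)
Your proposal is correct and matches the paper's intent exactly: the paper offers no written proof, calling the lemma ``a straightforward calculation from the definitions,'' and your direct substitution of the constant values $a_\O k_\O$ (and $b_\O k_\O$) into each $F_\O$, followed by the sum $\sum_{i=1}^{m} i = m(m+1)/2$, is precisely that calculation. Your handling of the extra $+1$ in the Type II weights and your observation that neither the degree-zero condition nor further symmetry is needed are both accurate.
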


We now wish to categorize the divisors that are elements of $\CP= \CP_1+\CP_2+\CP_3$.  Our categorization will break into different cases depending on the parity of $n$ and $\kappa$, and we cover three different cases in the next three theorems.  We will give a complete proof of the case where $n$ is odd in Theorem \ref{T:Podd}, but for the subsequent theorems we choose to highlight where the proof is different and leave the remaining details to the reader.

\begin{theorem}\label{T:Podd}
Assume $n$ is odd.  Then a divisor $\delta \in \CP_1+\CP_2+\CP_3$ if and only if $\delta$ satisfies the following conditions:
\begin{itemize}
\item $\sum_{v} \delta(v)=0$
\item For all Type III orbits $\O = \{x_i\} \cup \{y_i\}$, we have that  $\sum \delta(x_i)\equiv \sum\delta(y_i)$ (mod $n$).
\item $F(\delta)$ is a multiple of $n/\kappa$.
\end{itemize}
\end{theorem}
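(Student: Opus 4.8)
The plan is to characterize $\CP=\CP_1+\CP_2+\CP_3$ by reducing to the description of $\CP_1+\CP_2$ already obtained in Theorem \ref{T:P12}. Given a candidate divisor $\delta$, I will peel off an explicit component $\delta_3\in\CP_3$ so that the remainder $\delta'=\delta-\delta_3$ satisfies the three conditions of Theorem \ref{T:P12} and therefore lies in $\CP_1+\CP_2$. Throughout I use that $n$ odd forces each index $k_\O$ to divide $n$ with odd quotient $n/k_\O$, and that there are no Type II orbits.

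For necessity I would check the three conditions on each of $\CP_1+\CP_2$ and $\CP_3$ and then invoke linearity. The total-degree condition is immediate. For the Type III condition, Theorem \ref{T:P12} gives the exact equality $\sum\delta(x_i)=\sum\delta(y_i)$ on $\CP_1+\CP_2$, while on $\CP_3$ Lemma \ref{L:F3} gives $\sum\delta(x_i)=na_\O$ and $\sum\delta(y_i)=nb_\O$, whose difference is divisible by $n$; adding these yields the mod-$n$ congruence. For the $F$-condition, $F(\delta)$ is a multiple of $2n/\kappa$ on $\CP_1+\CP_2$ by Theorem \ref{T:P12}, and by Lemma \ref{L:F3} each $F_\O(\delta)$ on $\CP_3$ is a multiple of $n(n/k_\O+1)$, hence of $2n$ because $n/k_\O$ is odd; either way $F(\delta)$ is a multiple of $n/\kappa$.

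The key observation driving sufficiency is that, for $n$ odd, the formulas $F_\O(\delta)=\sum 2i\,\delta(z_i)$ and $F_\O(\delta)=\sum 2i\,(\delta(x_i)+\delta(y_i))$ on Type I and Type III orbits are even for every divisor, so $F(\delta)$ is always even; since $n/\kappa$ is odd, the hypothesis $n/\kappa \mid F(\delta)$ upgrades for free to $2n/\kappa \mid F(\delta)$. Now, given $\delta$ satisfying the three conditions, I would construct $\delta_3\in\CP_3$ with constant value $a_\O k_\O$ on each Type I orbit and values $a_\O k_\O$, $b_\O k_\O$ on the two suborbits of each Type III orbit. On a Type III orbit set $n c_\O=\sum\delta(x_i)-\sum\delta(y_i)$, an integer by the mod-$n$ hypothesis, and put $b_\O=a_\O-c_\O$; this yields $\sum\delta'(x_i)=\sum\delta'(y_i)$, i.e. condition (2) of Theorem \ref{T:P12}. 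On each Type I orbit I impose the parity $a_\O\equiv\sum_{v\in\O}\delta(v)\pmod 2$, which makes the Type I orbit-sums of $\delta'$ even; the Type III orbit-sums of $\delta'$ are then automatically even, since $a_\O+b_\O\equiv c_\O\equiv\sum_{v\in\O}\delta(v)\pmod 2$. Thus condition (1) of Theorem \ref{T:P12} holds, and condition (3) is free because $F(\delta_3)$ is a multiple of $2n$ while $F(\delta)$ is a multiple of $2n/\kappa$.

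The one remaining requirement, and the main obstacle, is that $\delta_3$ must have total degree zero: the chosen integers must satisfy $\sum_{\text{I}}a_\O+\sum_{\text{III}}(2a_\O-c_\O)=0$ while respecting the Type I parity constraints. I would solve this by fixing all of the constants but one and solving for the last, the only point to verify being that this last value comes out an integer, i.e. that the right-hand side is even. Reducing modulo $2$ and using $a_\O\equiv\sum_{v\in\O}\delta(v)$ on Type I orbits together with $c_\O\equiv\sum_{v\in\O}\delta(v)$ on Type III orbits, this parity equals $\sum_{v}\delta(v)$, which vanishes by the total-degree hypothesis; so the system is solvable (taking the free constant on a Type III orbit when one exists, and otherwise balancing the Type I constants). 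With such a $\delta_3\in\CP_3$ in hand, $\delta'=\delta-\delta_3$ satisfies all three hypotheses of Theorem \ref{T:P12}, hence $\delta'\in\CP_1+\CP_2$ and $\delta=\delta'+\delta_3\in\CP$, which completes the argument.
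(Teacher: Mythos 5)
Your proposal is correct and follows essentially the same route as the paper's proof: both directions reduce to Theorem \ref{T:P12} by subtracting an explicit element of $\CP_3$, both exploit the key fact that the absence of Type II orbits (for $n$ odd) makes $F(\delta)$ even for every divisor, so that divisibility by the odd number $n/\kappa$ upgrades for free to divisibility by $2n/\kappa$, and both end with a case analysis to arrange total degree zero for the subtracted divisor. The only difference is bookkeeping: the paper absorbs the excess degree on a designated Type I orbit when one exists (and otherwise on a Type III orbit, using that $\alpha$ is even), whereas you parametrize symmetrically and solve a linear equation for one free constant, preferring a Type III orbit; the parity computation that makes this solvable is the same in both arguments.
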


\begin{proof}
The conditions in the hypothesis of this Theorem are less restrictive than those in Theorem \ref{T:P12} and therefore any divisor that is in $\CP_1+\CP_2$ will automatically satisfy them.  We wish to show that if $\delta_3 \in \CP_3$ that it will also satisfy the conditions in the statement of the Theorem.  By definition, all divisors in $\CP_3$ have total degree equal to zero, ensuring that $\delta_3$ satisfies the first condition.  From the remarks in Definition \ref{D:sets} we know that if $\delta_3 \in \CP_3$ then it is constant on the $x_i$ and $y_i$ suborbits of all Type III orbits.  In particular, for each Type III orbit we have that $\sum \delta_3(x_i) \equiv \sum \delta_3(y_i) \equiv 0$ mod $n$, so it satisfies the second condition as well.

Recall that the hypothesis that $n$ is odd implies that all orbits are either of Type I or Type III.  Lemma \ref{L:F3} therefore implies that for each $\O$ we have  that $F_\O(\delta_3)$ is a multiple of $n/k_\O$, so will be a multiple of $n/\kappa$.  This implies that $F(\delta)$ is a multiple of $n/\kappa$.  Therefore, we see that any divisor in $\CP_3$ will satisfy the conditions in the statement of the theorem, and by linearity it is clear that any divisor in $\CP=\CP_1+\CP_2+\CP_3$ will as well.

Conversely, assume that $\delta$ satisfies the conditions in the statement of the theorem.  We wish to define a divisor $\delta_3 \in \CP_3$ so that $\delta-\delta_3 \in \CP_1+\CP_2$.  To do this, we first note that for each orbit of Type III we have by hypothesis that $\sum \delta(x_i)\equiv\sum\delta(y_i)$ (mod $n$).  This implies that  $(\sum \delta(x_i)- \sum\delta(y_i))/n$ is an integer, and we set this integer equal to $a_\O$.  Moreover, let $\alpha = \sum_\O a_\O$.  We now consider separately the case where $n$ does and does not have an orbit of Type I.
\vskip .1in
\noindent {\bf Case 1:} At least one orbit of Type I.

On each orbit $\O$ of Type III, let us define $\delta_3$ on the suborbits by setting $\delta_3(x_i) =k_\O a_\O$ and $\delta_3(y_i) = 0$ for all $i$.  By hypothesis, there exists at least one Type I orbit so choose one of them and designate it $\Omega$.  On all other Type I orbits, we define $\delta_3(v) = 0$ if $\sum_{v \in \O} \delta(v)$ is even and $\delta_3(v) = k_\O$ if $\sum_{v \in \O} \delta(v)$ is odd.  For vertices in $\Omega$, we define $\delta_3$ so that the total degree of the divisor is zero; in particular, for all $\omega \in \Omega$ we set $\delta_3(\omega) = -k_\Omega(\alpha+\hat{t}_1)$, where $\hat{t}_1$ is the number of Type I orbits so that $\sum_{v \in \O} \delta(v)$ is odd.  It is clear that $\delta_3$ satisfies the necessary symmetry properties to be in $\CP_3$ and also that $k_\O|\delta(v)$ for each $v \in \O$.  Therefore, $\delta_3 \in \CP_3$ and we set $\hat{\delta}=\delta-\delta_3$.  We wish to show that $\hat{\delta} \in \CP_1+\CP_2$ by showing that it satisfies the hypotheses of Theorem \ref{T:P12}.

To begin, we compute that for any orbit of Type III:
\begin{eqnarray*}
\sum \hat{\delta}(x_i) &=& \sum \delta(x_i) - \sum \delta_3(x_i)\\
&=& \sum \delta(x_i) - \frac{n}{k_\O}k_\O a_\O \\
&=& \sum \delta(x_i) - n \cdot \frac{\sum \delta(x_i)- \sum\delta(y_i)}{n} \\
&=&\sum \delta(y_i)\\
&=&\sum \hat{\delta}(y_i)
\end{eqnarray*}

This shows that Condition (2) is satisfied, and also implies that $\sum_{v \in \O} \hat{\delta}(v)$ is even on orbits of Type III.  For orbits of Type I, one can check that by construction we have that $\sum_{v \in \O} \delta_3(v) \equiv \sum_{v \in \O} \delta(v)$ (mod $2$) and therefore $\sum_{v \in \O} \hat{\delta}(v)$ is even on orbits of Type I as well.  Therefore, the first two conditions of Theorem \ref{T:P12} apply to $\hat{\delta}$.

To see that the third holds as well, we note that the fact that there are no orbits of Type II implies that for any divisor $\beta$, it must be the case that $F_\O(\beta)$ is even. By hypothesis, we know that $F(\delta)$ is a multiple of $n/\kappa$, and  $n/\kappa$ must be odd, so we can conclude that $F(\delta) \equiv 0$ (mod $2n/\kappa$).    On the other hand,  Lemma \ref{L:F3} implies that for all divisors $\beta \in \CP_3$ we have  $F(\beta)$ is a multiple of $n/\kappa$, so  we can similarly conclude that $F(\delta_3)$ is a multiple of $2n/\kappa$ as well.   It is an immediate consequence that $\hat{\delta} \equiv 0$ (mod $2n/\kappa$), showing that $\hat{\delta}$ satisfies all of the conditions of Theorem \ref{T:P12}.  In particular, this shows that $\hat{\delta} \in \CP_1+\CP_2$ and thus $\delta \in \CP$.
\vskip .1in

\noindent {\bf Case 2:} $n$ odd, all orbits are of Type III

This case works in a similar manner to what we did above.  In order to define our divisor $\delta_3$ we first note that If every orbit of $G$ is of Type III, then we have

\begin{eqnarray*}
\alpha &=& \sum_\O a_\O \\
&=& \sum_\O \frac{\sum \delta(x_i)- \sum\delta(y_i)}{n} \\
&=& \sum_\O \frac{\sum \delta(x_i)- \sum\delta(y_i)}{n} + \frac{\sum \delta(x_i)+ \sum\delta(y_i)}{n}\\
&=& \sum_\O \frac{2\sum \delta(x_i)}{n}
\end{eqnarray*}

\noindent and in particular $\alpha$ will be even.   We choose one orbit and designate it $\Omega$.  For the vertices in the $x_i$ suborbit of $\Omega$, we let $\delta_3$ have the value $k_\Omega(a_\Omega-\alpha/2)$ and for the vertices in the $y_i$-suborbit we give $\delta_3$ the value $-k_\Omega\alpha/2$.  On all orbits $\O \ne \Omega$, we define $\delta_3(x_i) =k_\O a_\O$ and $\delta_3(y_i) = 0$ for all $i$. It is easy to check that $\delta_3$ has the necessary symmetry and divisibility properties to be an element of $\CP_3$.   Moreover, one can go through very similar computations as in Case 1 to see that $\hat{\delta} = \delta-\delta_3$ satisfies the conditions of Theorem \ref{T:P12} and thus $\delta \in \CP$.
\end{proof}

\begin{theorem}\label{T:Pevena}
Assume $n$ and $\kappa$ are both even. Then a divisor $\delta \in \CP_1+\CP_2+\CP_3$ if and only if $\delta$ satisfies the following conditions:
\begin{itemize}
\item $\sum_{v} \delta(v)=0$
\item If $\O$ is an orbit of Type III  then $\sum \delta(x_i)\equiv \sum\delta(y_i)$ (mod $n$).
\item $\sum_{v \in \O} \delta(v)$ is even for all inertial orbits.
\item $F(\delta)$ is a multiple of $2n/\kappa$.
\end{itemize}
\end{theorem}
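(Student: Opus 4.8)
The plan is to follow the template of Theorem~\ref{T:Podd}: establish necessity by checking each generator of $\CP=\CP_1+\CP_2+\CP_3$ against the four conditions, and establish sufficiency by peeling off a single divisor $\delta_3\in\CP_3$ from a given $\delta$ and reducing the remainder to Theorem~\ref{T:P12}.

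For necessity, the conditions already hold on $\CP_1+\CP_2$: conditions (1) and (2) are weakenings of the corresponding conclusions of Theorem~\ref{T:P12} (equality of suborbit sums implies the congruence mod $n$), the parity condition on inertial orbits is part of Theorem~\ref{T:P12}(1), and the $F$-condition is exactly Theorem~\ref{T:P12}(3). So it remains to verify the four conditions on a divisor $\delta_3\in\CP_3$. The degree and suborbit-constancy conditions are immediate from the description of $\CP_3$ in Definition~\ref{D:sets}; since $\delta_3$ equals the constant $a_\O k_\O$ on each inertial orbit $\O$, its sum over $\O$ is $na_\O$, which is even precisely because $n$ is even, giving the third condition, while the suborbit sums on a Type III orbit are multiples of $n$, giving the second. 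For the $F$-condition I would invoke Lemma~\ref{L:F3}: each $F_\O(\delta_3)$ is $n/k_\O$ times an integer, hence a multiple of $n/\kappa$, and the extra factor of $2$ comes for free from $\kappa$ being even, writing $F_\O(\delta_3)=(n/\kappa)\cdot\kappa\cdot(\cdots)$. This is the one place where the hypothesis ``$\kappa$ even'' does real work, and it explains why the bound is $2n/\kappa$ rather than $n/\kappa$.

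For sufficiency, given $\delta$ satisfying the four conditions I would build $\delta_3\in\CP_3$ with $\delta-\delta_3\in\CP_1+\CP_2$ by verifying the hypotheses of Theorem~\ref{T:P12} for $\hat{\delta}:=\delta-\delta_3$. On each Type III orbit set $a_\O=(\sum\delta(x_i)-\sum\delta(y_i))/n$, an integer by condition (2), and put $\delta_3(x_i)=k_\O a_\O$, $\delta_3(y_i)=0$; a short computation then gives $\sum\hat{\delta}(x_i)=\sum\hat{\delta}(y_i)$, which is Theorem~\ref{T:P12}(2) and also forces the Type III orbit sums of $\hat{\delta}$ to be even. On inertial orbits the sums of $\delta$ are even by hypothesis and those of any $\CP_3$-divisor are even (again $na_\O$ with $n$ even), so Theorem~\ref{T:P12}(1) holds throughout. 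The $F$-condition for $\hat{\delta}$ is then automatic, since $F(\delta)$ is a multiple of $2n/\kappa$ by hypothesis and $F(\delta_3)$ is one by the necessity computation. Compared with the odd case this is a genuine simplification, as there the divisibility of $F$ had to be upgraded by hand.

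The remaining, and I expect hardest, point is arranging $\delta_3$ to have total degree zero. The construction above contributes total degree $n\alpha$, where $\alpha=\sum_\O a_\O$ ranges over Type III orbits, and this must be cancelled within $\CP_3$ without disturbing the two verifications just made. When at least one inertial orbit $\Omega$ is present this is easy: redefine $\delta_3$ to be the constant $-k_\Omega\alpha$ on $\Omega$, which shifts the total degree by $-n\alpha$ and leaves every orbit sum even. The genuine obstacle is the sub-case in which every orbit is of Type III, so the correction can only be absorbed on a Type III orbit; keeping condition (2) intact then forces one to split the correction symmetrically between its two suborbits, which is possible over $\ZZ$ only when $\alpha$ is even. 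In the odd case one proves $\alpha$ even using $\gcd(2,n)=1$, but that argument collapses when $n$ is even, and the $F$-condition does not control the parity of $\alpha$. Resolving this case, either by a new parity argument forcing $\alpha$ even under the stated hypotheses or by a more clever $\delta_3$ distributing the correction across several Type III orbits, is where I expect the real work to lie, and it is the step I would scrutinize most carefully before trusting the statement exactly as written.
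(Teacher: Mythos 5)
Where your proposal is complete, it is the paper's proof: necessity is argued exactly as you suggest (Lemma \ref{L:F3} gives $n \mid F_\O(\delta_3)$ for $\delta_3 \in \CP_3$, and the paper gets the extra factor of two ``for free'' by writing $n = \frac{2n}{\kappa}\cdot\frac{\kappa}{2}$), and in the sufficiency direction, when an inertial orbit exists, the paper makes the same construction you do --- $\delta_3(x_i)=k_\O a_\O$, $\delta_3(y_i)=0$ on Type III orbits, the constant $-k_\Omega\alpha$ on a chosen Type I orbit $\Omega$, then reduction of $\hat{\delta}=\delta-\delta_3$ to Theorem \ref{T:P12}.

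The sub-case you declined to certify --- all orbits of Type III --- is exactly where the published proof is deficient, so your caution is vindicated. The paper does precisely what you predicted: it picks an orbit $\Omega$, puts $k_\Omega(a_\Omega-\alpha/2)$ on its $x_i$-suborbit and $-k_\Omega\alpha/2$ on its $y_i$-suborbit, and asserts ``it is clear that $\delta_3 \in \CP_3$.'' That requires $\alpha$ to be even, which is never justified; as you note, the parity argument from Case 2 of Theorem \ref{T:Podd} (that $n\alpha = 2\sum_\O\sum_i\delta(x_i)$ forces $2\mid\alpha$) collapses when $n$ is even. Moreover, no argument can close this gap, because the statement fails in this sub-case. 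Take $n=2$ and the connected simple graph with vertices $x_1,x_2,y_1,y_2,x',y'$, edges $\{x_1x',\, x_2x',\, y_1y',\, y_2y',\, x_1y_1,\, x_2y_2\}$, and $\sigma_1=(x_1\,y_2)(x_2\,y_1)(x'\,y')$, $\sigma_2=(x_1\,y_1)(x_2\,y_2)(x'\,y')$. This action is harmonic (only $\sigma_2$ fixes edges, and it swaps their endpoints), and there are two Type III orbits, of index $1$ and $2$, so $\kappa=2$. The divisor $\delta$ with $\delta(x')=1$, $\delta(y')=-1$, and $\delta=0$ elsewhere satisfies all four conditions (here $F(\delta)=0$ and $1\equiv -1 \pmod 2$), yet $\delta\notin\CP_1+\CP_2+\CP_3$: in any decomposition $\delta=\delta_1+\delta_2+\delta_3$, the vanishing of the sum on the four-element orbit together with the symmetry of each piece forces $\delta_1$ and $\delta_2$ to be constant on that orbit, so their degree-zero conditions make $\delta_1(x')+\delta_2(x')$ even, whence $\delta_3(x')=1-\delta_1(x')-\delta_2(x')$ is odd, contradicting the requirement that $\delta_3$ be divisible by the index $2$ on $\{x',y'\}$. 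Note $\alpha=1$ here --- exactly the obstruction you isolated. So your write-up is as complete as the paper's; the step you flagged is not a routine verification but a genuine error, and in this degenerate case the theorem (and Theorem \ref{T:DP}, which inherits the problem) needs either an added hypothesis (e.g., the existence of an inertial orbit) or an added condition, namely that $\alpha=\sum_\O\bigl(\sum\delta(x_i)-\sum\delta(y_i)\bigr)/n$ be even, which one can check is necessary for membership in $\CP$ and restores the sufficiency argument.
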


\begin{proof}

The proof of this Theorem is similar to that of Theorem \ref{T:Podd}.  In particular, it is straightforward to check that all four conditions hold for divisors that are in $\CP_1+\CP_2$, and the first three conditions are again straightforward for divisors in $\CP_3$.  To see the fourth condition must hold for divisors in $\CP_3$, we note that Lemma \ref{L:F3} implies that if $\delta_3 \in \CP_3$ then  $F_\O(\delta_3)$ is a multiple of $n$ for all orbits $\O$.  Moreover, because $\kappa$ is even we see that $n=\frac{2n}{\kappa}\cdot \frac{\kappa}{2}$ and therefore that $F_\O(\delta_3)$ is a multiple of $2n/\kappa$.

To see that the conditions are sufficient, we define $\delta_3(x_i) =k_\O a_\O$ at all vertices in the $x_i$-suborbit of each orbit $\O$ of Type III, where $a_\O$ is defined as above.  If there are any orbits of Type I, we choose one and designate it  $\Omega$, and as above we define $\delta_3(z_i)=-k_\Omega \alpha$ for each vertex in $\Omega$.  For all other vertices, we define $\delta_3(v)=0$.   The proof of Case 1 of Theorem \ref{T:Podd} then carries through with minor modifications.

If all orbits have Type III then we can follow the argument from Case 2 of Theorem \ref{T:Podd} by choosing one orbit $\Omega$ and setting $\delta_3(\omega)=k_\Omega(a_\Omega-\alpha/2)$ for the vertices in the $x_i$-suborbit and $\delta_3(\omega)=-k_\Omega\alpha/2$ for the vertices in the $y_i$-suborbit of $\Omega$.  As above, we define $\delta_3(x_i) =k_\O a_\O$ and $\delta_3(y_i) = 0$ for the orbits $\O \ne \Omega$.  It is clear that $\delta_3 \in \CP_3$, and it is straightforward to check that $\hat{\delta}= \delta-\delta_3$ satisfies the conditions of Theorem \ref{T:P12}.  The theorem follows.
 \end{proof}

\begin{theorem} \label{T:Pevenb}
Assume $n$ is  even and $\kappa$ is odd.  Then a divisor $\delta \in \CP_1+\CP_2+\CP_3$ if and only if $\delta$ satisfies the following conditions:
\begin{itemize}
\item $\sum_{v} \delta(v)=0$
\item If $\O$ is an orbit of Type III  then $\sum \delta(x_i)\equiv \sum\delta(y_i)$ (mod $n$).
\item If $\O$ is an inertial orbit, we have  $\sum_{v \in \O} \delta(v)$ is even.
\item $F(\delta)$ is a multiple of $n/\kappa$.
\end{itemize}
\end{theorem}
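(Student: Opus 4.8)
The plan is to follow the template of Theorems \ref{T:Podd} and \ref{T:Pevena}, isolating only the places where the hypotheses ``$n$ even, $\kappa$ odd'' force a different argument. The first thing I would record is a structural observation: since $\kappa$ is odd every index $k_\O$ is odd, and because $n$ is even this makes $n/k_\O$ even for \emph{every} orbit $\O$. In particular every inertial orbit has even size, so any divisor that is constant on an inertial orbit automatically has even sum there. This is exactly what makes the evenness condition appear as a genuine hypothesis rather than a consequence of the others, and it simultaneously trivializes several parity checks for the auxiliary divisor $\delta_3$ constructed below. I would also note that, unlike the odd case, Type II orbits may now occur.

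For necessity, the first three conditions are checked as before: they hold on $\CP_1+\CP_2$ because they are weakenings of the conditions of Theorem \ref{T:P12}, and on $\CP_3$ they follow from the description in Definition \ref{D:sets} together with the evenness of $n$. For the fourth condition, Lemma \ref{L:F3} shows that every $\delta_3\in\CP_3$ has $F_\O(\delta_3)$ a multiple of $n$, hence of $n/\kappa$, so $F$ is a multiple of $n/\kappa$ on $\CP_3$; combined with the $2n/\kappa$-divisibility on $\CP_1+\CP_2$ this yields the (weaker) $n/\kappa$ bound on all of $\CP$, and linearity finishes this direction.

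For sufficiency I would, given $\delta$ satisfying the four conditions, build a divisor $\delta_3\in\CP_3$ with $\hat\delta=\delta-\delta_3\in\CP_1+\CP_2$ and verify membership via Theorem \ref{T:P12}. As in Case 1 of Theorem \ref{T:Podd}, on each Type III orbit I set $\delta_3(x_i)=k_\O a_\O$ and $\delta_3(y_i)=0$, where $a_\O=(\sum\delta(x_i)-\sum\delta(y_i))/n$; this is an integer by the second condition and forces $\sum\hat\delta(x_i)=\sum\hat\delta(y_i)$, which is Condition (2) of Theorem \ref{T:P12}. Condition (1) of Theorem \ref{T:P12} is then automatic: on Type III orbits the two suborbit sums of $\hat\delta$ agree so the orbit sum is even, while on inertial orbits $\sum_{v\in\O}\delta(v)$ is even by hypothesis and $\sum_{v\in\O}\delta_3(v)$ is even because $n/k_\O$ is even. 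Total degree is corrected by absorbing the excess on one designated inertial orbit, exactly as in the earlier proofs.

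The hard part will be Condition (3) of Theorem \ref{T:P12}, namely promoting $F(\hat\delta)\equiv 0 \pmod{n/\kappa}$ to $F(\hat\delta)\equiv 0 \pmod{2n/\kappa}$. In the odd case this was free because $2$ does not divide $n/\kappa$, but here $n/\kappa$ is even and that shortcut is gone. The device I would use is that Lemma \ref{L:F3} gives $F_\O(\delta_3)=na_\O(n/k_\O+2)$ on a Type II orbit, and $n/k_\O+2$ is even; after dividing by $n/\kappa$ (an even number, with $\kappa$ odd) the parity of $F(\delta_3)/(n/\kappa)$ is governed precisely by the constant values chosen on the Type II orbits. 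Hence, when a Type II orbit is present, I can tune its value (compensating on the designated Type I orbit to keep total degree zero, which is available by the standing convention that Type I orbits exist whenever Type II orbits do) so that $F(\delta_3)/(n/\kappa)$ matches the parity of $F(\delta)/(n/\kappa)$, giving $F(\hat\delta)\equiv 0\pmod{2n/\kappa}$. The subcase I expect to be genuinely delicate, and the step I would scrutinize most carefully, is when \emph{no} Type II orbit occurs: there $F(\delta_3)/(n/\kappa)$ is forced to be even, so the argument closes only if the hypotheses already force $F(\delta)/(n/\kappa)$ to be even, and verifying whether divisibility by $n/\kappa$ truly suffices in that case (or must be sharpened to $2n/\kappa$) is the crux on which the whole theorem turns.
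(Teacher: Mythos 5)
Your proposal tracks the paper's own proof everywhere the paper's proof is sound. The necessity direction is identical. In the sufficiency direction, when a Type II orbit is present your ``tuning'' device is exactly the paper's construction: the paper puts the value $k_\Theta$ on a chosen Type II orbit $\Theta$ and $-k_\Omega(\alpha+1)$ on a chosen Type I orbit $\Omega$, and checks $F(\delta_3)\equiv n/\kappa \pmod{2n/\kappa}$, so that $\hat\delta=\delta-\delta_3$ falls under Theorem~\ref{T:P12}. One bookkeeping correction: the Type II term itself contributes \emph{evenly} to $F(\delta_3)/(n/\kappa)$, since $\kappa(n/k_\Theta+2)$ is even; the parity flip is produced by the compensating coefficient on the Type I orbit, whose per-unit contribution $\kappa(n/k_\Omega+1)$ is odd. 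Combined with the degree-zero constraint this is equivalent to your statement that the parity of $F(\delta_3)/(n/\kappa)$ equals the parity of the Type II coefficients, and the construction goes through.

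The genuine gap is the subcase you flagged and left open: no Type II orbits and $F(\delta)\equiv n/\kappa\pmod{2n/\kappa}$. You were right not to claim it, because it cannot be proved: the hypotheses do \emph{not} force $F(\delta)/(n/\kappa)$ to be even there, and the theorem as stated fails when $t_2=0$. The paper closes this subcase with the single unsupported sentence that $F(\delta)\equiv n/\kappa \pmod{2n/\kappa}$ forces a Type II orbit to exist, and that assertion is false. Concretely, take $n=2$ and let $G$ have vertices $z_1,z_2,x_1,x_2,y_1,y_2$, with each $z_i$ joined to each of $x_1,x_2,y_1,y_2$, together with the two extra edges $x_1y_1$ and $x_2y_2$; let $\sigma_1=(z_1\,z_2)(x_1\,y_2)(x_2\,y_1)$ and $\sigma_2=(x_1\,y_1)(x_2\,y_2)$. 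This action is harmonic, $\{z_1,z_2\}$ is Type I, $\{x_1,x_2,y_1,y_2\}$ is Type III, both of index $1$, so $\kappa=1$ is odd and $t_2=0$. The divisor $\delta=z_1-z_2$ satisfies all four conditions of the theorem, since $F(\delta)=2-4=-2$ is a multiple of $n/\kappa=2$; yet $\delta\notin\CP_1+\CP_2+\CP_3$. Indeed, by Lemma~\ref{L:global} every element of $\CP_1+\CP_2$ has $F\equiv 0\pmod 4$, while every $\delta_3\in\CP_3$ here has values $(a,a,b,b,c,c)$ on $(z_1,z_2,x_1,x_2,y_1,y_2)$ with $a+b+c=0$, whence $F(\delta_3)=6(a+b+c)=0$; so $F\equiv 0\pmod 4$ on all of $\CP$, but $F(\delta)\equiv 2\pmod 4$. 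This computation is general: if $t_2=0$ then Lemma~\ref{L:F3} plus the degree-zero constraint give, for every $\delta_3\in\CP_3$,
\[F(\delta_3)\equiv \frac{n}{\kappa}\Bigl(\sum_{\O \text{ Type I}}a_\O+\sum_{\O \text{ Type III}}(a_\O+b_\O)\Bigr)=0 \pmod{2n/\kappa},\]
so condition 4 cannot be relaxed below $2n/\kappa$-divisibility in that case.

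So your proof is incomplete exactly where the statement itself breaks down, and your suspicion that the last condition ``must be sharpened to $2n/\kappa$'' when no Type II orbit exists is the correct resolution: the theorem is true as stated when $t_2\ge 1$ (your argument, which is the paper's, proves it), and when $t_2=0$ it becomes true after replacing condition 4 by ``$F(\delta)$ is a multiple of $2n/\kappa$.'' The error propagates mildly into the paper: in the proof of Theorem~\ref{T:DP} the map $\alpha$ is not surjective when $t_2=0$ (its first coordinate $F$ takes only even values), though the stated isomorphism class of $\CD/\CP$ survives because the even residues form a subgroup $2\ZZ/(2n/\kappa)\ZZ\cong\ZZ/(n/\kappa)\ZZ$.
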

\begin{proof}

As above, we first check that the conditions are all necessary.  The proof is identical to the previous theorem except that if $\kappa$ is odd then we have that $n \equiv \frac{n}{\kappa}$ mod $\frac{2n}{\kappa}$, which means that we can only conclude that for divisors $\delta_3 \in \CP_3$ we have that $F(\delta_3)$ is a multiple of $n/\kappa$ rather than $2n/\kappa$.

In order to check the sufficiency of these conditions, we consider two cases.  If it is the case that $F(\delta)$ is a multiple of $2n/\kappa$ then we are able to use the same constructions as in the proof of Theorem \ref{T:Pevena} and the proofs will carry through exactly.

It remains to consider the case where  $F(\delta) \equiv n/\kappa$ (mod $2n/\kappa$).   We note that if this is the case then there must be at least one Type II orbit, which in turn implies the existence of at least one Type I orbit.  We wish to choose one Type I orbit and designate it by $\Omega$ and  one Type II orbit and designate it by $\Theta$.  As in the previous cases, we define $\delta_3(x_i) =k_\O a_\O$ at all vertices in the $x_i$-suborbit of each orbit $\O$ of Type III and $\delta_3(y_i)=0$ for the vertices in the other suborbits.  For each vertex $\theta \in \Theta$ we define $\delta_3(\theta) = k_\Theta$, and for each $\omega \in \Omega$  we set $\delta_3(\omega)=-k_\Omega (\alpha+1)$.  Finally, we set $\delta_3(v)=0$ for all vertices in Type I and II orbits other than $\Omega$.    It is clear that this defines a divisor in $\CP_3$, and moreover we can compute:

\begin{eqnarray*}
F(\delta_3) &=& \sum_{\O \text{ Type III}} a_\O n \left(\frac{n}{k_\O}+1 \right) + n \left(\frac{n}{k_\Theta}+2 \right) - (\alpha + 1)  n \left(\frac{n}{k_\Omega}+1 \right) \\
&=&\sum_{\O \text{ Type III}}n a_\O\left(\frac{n}{k_\O}-\frac{n}{k_\Omega} \right) + n \left(\frac{n}{k_\Theta}-\frac{n}{k_\Omega} \right) + n \\
& \equiv & n \text{ (mod $\frac{2n}{\kappa}$)} \\
& \equiv &  \frac{n}{\kappa}  \text{ (mod $\frac{2n}{\kappa}$)} \\
\end{eqnarray*}

In particular, if we define $\hat{\delta} = \delta - \delta_3$ then we see that $F(\hat{\delta})$ is a multiple of $2n/\kappa$.  It is straightforward to check that $\hat{\delta}$ satisfies the other conditions in the statement of Theorem \ref{T:P12}, which proves the Theorem.
\end{proof}

We now wish to use the structure of $\CP$ to analyze the group $\CD/\CP$.

\begin{example}
Before proving the general case, we return to the case of the graph $K_{4,4}$ discussed in Example \ref{E:K44}.  In particular, we have $n=\kappa=4$, and therefore according to Theorem \ref{T:Pevena}, a divisor of total degree zero will be in $\CP$ if and only if we have:
\begin{itemize}
\item $\delta(x)\equiv \delta(y)$ (mod $4$).
\item $\delta(w_1)+\delta(w_2)+\delta(w_3)+\delta(w_4)$ is even, and
\item $\delta(z_1) + \delta(z_2)$ is even.
\end{itemize}
In particular, the divisor must satisfy one condition mod $4$ and two conditions mod $2$.  However, we note that the first condition implies that $\delta(x)$ and $\delta(y)$ have the same parity and in particular that $\delta(x)+\delta(y)$ will be even. Because the total degree of the divisor is zero, the third condition will therefore follow immediately from the first two.  This implies that the group $\CD/\CP$ is isomorphic to $(\ZZ/4\ZZ) \oplus (\ZZ/2\ZZ)$.
\end{example}

More generally, a similar phenomenon will occur and some of the conditions will be automatically satisfied if the others are.  Explicitly, we have the following.

\begin{theorem}\label{T:DP}
Let $\CD$ be the set of divisors of degree zero on $G$.  Then
\[\CD/\CP \cong \begin{cases}\ZZ/\frac{n}{\kappa}\ZZ \oplus ( \ZZ/n\ZZ)^{t_3}\text{ if $n$ odd}\\
\ZZ/\frac{n}{\kappa}\ZZ \oplus( \ZZ/n\ZZ)^{t_3} \oplus(\ZZ/2\ZZ)^{\tilde{t}}\text{ if $n$ even and $\kappa$ is odd} \\
\ZZ/\frac{2n}{\kappa}\ZZ \oplus( \ZZ/n\ZZ)^{t_3} \oplus(\ZZ/2\ZZ)^{\tilde{t}}\text{ if $n$ and $\kappa$ are both even}
\end{cases}\]
\noindent where $\tilde{t} = \min(t_1-1,0)+\min(t_2-1,0)$.
\end{theorem}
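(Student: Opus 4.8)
The plan is to realize $\CD/\CP$ as the image of a single homomorphism and then read off its structure. Each of the characterizations of $\CP$ (Theorems \ref{T:Podd}, \ref{T:Pevena}, \ref{T:Pevenb}) presents membership of a degree-zero divisor in $\CP$ as a finite list of congruence conditions: one condition $\sum\delta(x_i)\equiv\sum\delta(y_i)\pmod n$ for each of the $t_3$ Type III orbits, one parity condition $\sum_{v\in\O}\delta(v)\equiv 0\pmod 2$ for each of the $t_1+t_2$ inertial orbits, and one condition $F(\delta)\equiv 0\pmod m$, where $m=n/\kappa$ when $\kappa$ is odd and $m=2n/\kappa$ when $n$ and $\kappa$ are both even. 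Bundling these into a single homomorphism
\[\Phi\colon \CD \longrightarrow \ZZ/m\ZZ \,\oplus\, (\ZZ/n\ZZ)^{t_3} \,\oplus\, (\ZZ/2\ZZ)^{t_1+t_2}\]
that records the residue measuring the failure of each condition (the parity block being omitted entirely when $n$ is odd), the relevant characterization theorem says precisely that $\CP=\ker\Phi$. Hence $\CD/\CP\cong \operatorname{im}\Phi$ by the first isomorphism theorem, and the whole problem reduces to computing this image as a subgroup of the displayed product.

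First I would identify the relations that hold identically on $\operatorname{im}\Phi$. There are two, both modulo $2$. The degree-zero hypothesis gives $\sum_\O\sum_{v\in\O}\delta(v)=0$, and reducing mod $2$ (using that the orbit-sum of a Type III orbit is congruent mod $2$ to its Type III invariant) yields one relation tying together all the inertial parities and the parities of the Type III coordinates. The definition of $F$ gives the second: since $F_\O(\delta)$ is even on every Type I and Type III orbit, one has $F(\delta)\equiv\sum_{\O\ \mathrm{Type\ II}}\sum_{v\in\O}\delta(v)\pmod 2$, so the $F$-coordinate is pinned mod $2$ to the sum of the Type II parities. The main content of this step is then a surjectivity claim: every tuple compatible with these two relations is actually attained. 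I would prove this constructively, building target divisors orbit by orbit exactly as in the sufficiency arguments of Theorems \ref{T:Podd}--\ref{T:Pevenb} --- adjusting a Type III coordinate with a divisor supported on that orbit's suborbits, flipping a parity by adding a single vertex of the relevant inertial orbit, and hitting a generator of the $F$-factor using an orbit whose index realizes the least common multiple $\kappa$ --- while restoring total degree zero through a designated correction orbit.

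With $\operatorname{im}\Phi$ identified as the subgroup cut out by these two mod-$2$ relations, the final step is to compute its invariant factors and match them to the stated direct sum. This is where the three-way case split arises naturally: a mod-$2$ relation only constrains coordinates whose modulus is even, so when $n$ is odd (whence $m=n/\kappa$ is odd and there are no parity coordinates) both relations become vacuous on $\operatorname{im}\Phi$ and the image is the full product $\ZZ/\frac{n}{\kappa}\ZZ\oplus(\ZZ/n\ZZ)^{t_3}$; when $n$ is even both relations genuinely cut the $2$-primary part, lowering the rank of the inertial $(\ZZ/2\ZZ)$-block from $t_1+t_2$ to $\tilde t$, while the parity of $\kappa$ only toggles the modulus $m$ of the $F$-factor between $n/\kappa$ and $2n/\kappa$.

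I expect this last step to be the principal obstacle, for two linked reasons. The abstract isomorphism type is sensitive to how the two mod-$2$ relations interact with the $F$-coordinate: when a Type II orbit is present, the $F$-factor retains its full modulus $m$ because a generator of $\ZZ/m\ZZ$ can be paired with parity bits and still lie in $\operatorname{im}\Phi$, whereas the degenerate configurations (no Type II orbit, or no Type I orbit) force the designated correction orbit and the available generators to be chosen differently, echoing the Case~1 / Case~2 dichotomy already needed in the proof of Theorem \ref{T:Podd}. Carefully tracking the $2$-primary invariant factors through these degenerate cases, and confirming that the surjectivity constructions still go through when the correcting orbit must be drawn from a different type, is the delicate part of the argument; once it is settled, reading off $\tilde t$ and the modulus of the cyclic $F$-factor is routine.
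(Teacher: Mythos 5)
Your proposal follows the same route as the paper's own proof: the paper also packages the congruence conditions of Theorems \ref{T:Podd}--\ref{T:Pevenb} into orbit-wise maps $A_\O$ and a bundled homomorphism $\alpha$ (your $\Phi$) on $\CD$ with kernel $\CP$, and in the $n$ even case it uses exactly your two mod-$2$ relations. On one point you are actually more careful than the paper: the paper asserts that $\alpha$ surjects onto the full product and then derives the two relations, which contradict that assertion; you correctly identify the image as the subgroup cut out by the relations and treat surjectivity onto that subgroup as something to be proved.

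The genuine gap is the step you defer as ``the delicate part,'' and it cannot be closed, because the stated formula is false in exactly those degenerate configurations. When $n$ is even and $t_2=0$, your relation $F(\delta)\equiv\sum_{\O \text{ Type II}}A_\O(\delta)\pmod 2$ degenerates to $F(\delta)\equiv 0 \pmod 2$; since the modulus of the $F$-coordinate ($n/\kappa$ or $2n/\kappa$) is even whenever $n$ is even, this relation halves the cyclic $F$-factor instead of absorbing one inertial parity bit, so $\operatorname{im}\Phi$ is strictly smaller than the displayed group. This situation really occurs: the paper's own octahedron example has $n=2$, $\kappa=2$, $t_1=2$, $t_2=0$, $t_3=1$; there every divisor in $\CP=\CP_1+\CP_2+\CP_3$ has $\delta(x)$ and $\delta(y)$ even, and a direct lattice computation gives $\CD/\CP\cong(\ZZ/2\ZZ)^2$, whereas the theorem predicts $(\ZZ/2\ZZ)^3$ (reading $\min$ as the evidently intended $\max$; as literally written the exponent $\tilde t$ would be negative there). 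So your plan succeeds, and reproduces the paper's argument, precisely when $n$ is odd or when $t_1\ge 1$ and $t_2\ge 1$; in the remaining cases the ``routine'' final matching fails, not through any defect of your method but because the theorem needs either the hypothesis $t_2\ge 1$ or a corrected formula (the $F$-factor halved and inertial exponent $t_1-1$ when $n$ is even and $t_2=0$). This defect is inherited from the paper, whose proof silently assumes that a Type I coordinate and a Type II coordinate are each available to absorb one relation; a matching discrepancy in Theorem \ref{T:K} compensates for it in the paper's later order counts, which is presumably why it went unnoticed.
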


\begin{proof}
Let $\delta$ be a divisor in $\CD$.  For each inertial orbit $\O$, we define the map $A_\O:\CD \rightarrow \ZZ/2\ZZ$ by setting $A_\O(\delta) = \sum_{v \in \O} \delta(v)$.  On the other hand, if $\O=\{x_i\} \cup \{y_i\}$ is a Type III orbit then we define $A_\O:\CD \rightarrow \ZZ/n\ZZ$ by setting $A_\O = \sum(x_i)-\sum(y_i)$.

If $n$ is odd then we define the surjective map $\alpha: \CD \rightarrow \ZZ/(n/\kappa)\ZZ \oplus ( \ZZ/n\ZZ)^{t_3}$ by setting \[\alpha(\delta) =\left (F(\delta), \bigoplus_{\O \text{ Type III}} A_\O(\delta)\right)\]
\noindent and we note that Theorem \ref{T:Podd} gives us that the kernel of $\alpha$ is exactly the elements of $\CP$, proving the theorem in this case.

If $n$ is even, we set $\epsilon=1$ (resp. $0$) if $\kappa$ is odd (resp. even). Similar to the above, we define the map $\alpha: \CD \rightarrow \ZZ/\frac{\epsilon n}{\kappa}\ZZ \oplus ( \ZZ/n\ZZ)^{t_3} \oplus (\ZZ/2\ZZ)^{t_1+t_2}$ by setting \[\alpha(\delta) = \left(F(\delta), \bigoplus_{\O \text{ Type III}} A_\O(\delta),\bigoplus_{\O \text{ Type I}} A_\O(\delta), \bigoplus_{\O \text{ Type II}} A_\O(\delta)\right )\]
\noindent and once again it is clear that this map is surjective.  The content of Theorems \ref{T:Pevena} and \ref{T:Pevenb} is that elements of $\CP$ will be in the kernel of $\alpha$, but in this case these conditions are not sufficient. In particular, we note that if $\delta \in \CP$ then $F(\delta)$ will be even, and therefore we have that
\begin{eqnarray*}
0 & \equiv &  F(\delta) \\
& \equiv & \sum_{\O \text{ Type II}} F_\O(\delta)\\
& \equiv & \sum_{\O \text{ Type II}}\left(\sum_{v \in \O} \delta(v)\right) \\
& \equiv & \sum_{\O \text{ Type II}} A_\O(\delta)\text{ (mod $2$)}\\
\end{eqnarray*}

Moreover, we know that for a divisor $\delta \in \CP$ and a Type III orbit $\O=\{x_i\} \cup \{y_i\}$ we have $\sum \delta(x_i) \equiv \sum \delta(y_i)$ mod $n$, and because $n$ is even this implies that $\sum_{v \in \O} \delta(v)$ is even.  Because we know that the total degree of $\delta$ is equal to zero, these facts further imply that $\sum_{\O \text{ Type I}} A_\O(\delta)$ is even as well.  This implies that $\CD/\CP \cong \ZZ/\frac{\epsilon n}{\kappa}\ZZ \oplus( \ZZ/n\ZZ)^{t_3} \oplus(\ZZ/2\ZZ)^{\tilde{t}}$, as desired.
\end{proof}

\section{Intersections of Pullbacks}

In this section, we wish to determine the structure of $\CK$, which we defined to be the kernel of the map $\Phi: \Jac(H_1) \oplus \Jac(H_2) \oplus \Jac(H_3) \rightarrow \CP/\CL'$.  In some sense, the previous section computed $\CP_1  + \CP_2 + \CP_3$ and in this section we wish to understand the difference between that set and  $\CP_1 \oplus \CP_2 \oplus \CP_3$.   In order to do this, we want to look at the intersections of the groups  $\CP_i$.

\begin{example}
Considering the graph in Example \ref{E:K44}, we display in Figure \ref{F:Kint} the generic elements of both $\CP_0$ and $\CP_1 \cap \CP_2$.

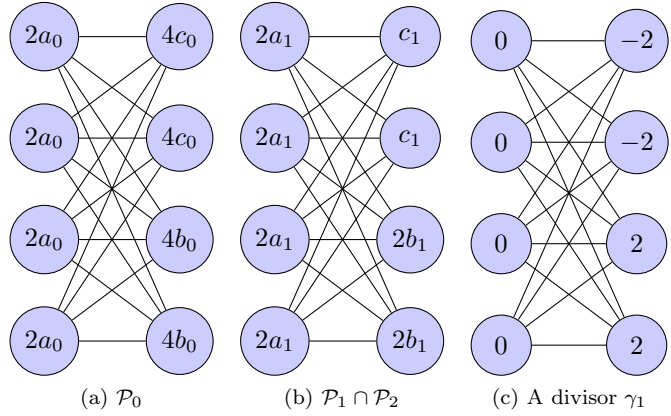
\begin{figure}[!htbp]
\centering
\subfloat[$\CP_0$]{\begin{tikzpicture}
  [scale=.9,auto=left,every node/.style={state,minimum size=.8cm,fill=blue!20}]
  \node(x1) at (-1,1) {$2a_0$};
  \node(x2) at (-1,2.5){$2a_0$};
  \node(x3) at (-1,4){$2a_0$};
  \node(x4) at (-1,5.5) {$2a_0$};
  \node(y1) at (1,1){$4b_0$};
  \node(y2) at (1,2.5){$4b_0$};
  \node(z1) at (1,4) {$4c_0$};
  \node(z2) at (1,5.5){$4c_0$};
  \foreach \from/\to in {x1/y1,x1/y2,x1/z1,x1/z2,x2/y1,x2/y2,x2/z1,x2/z2,x3/y1,x3/y2,x3/z1,x3/z2,x4/y1,x4/y2,x4/z1,x4/z2}
    \draw (\from) -- (\to);
\end{tikzpicture}} \quad
\subfloat[$\CP_1 \cap \CP_2$]{\begin{tikzpicture}
  [scale=.9,auto=left,every node/.style={state,minimum size=.8cm,fill=blue!20}]
  \node(x1) at (-1,1) {$2a_1$};
  \node(x2) at (-1,2.5){$2a_1$};
  \node(x3) at (-1,4){$2a_1$};
  \node(x4) at (-1,5.5) {$2a_1$};
  \node(y1) at (1,1){$2b_1$};
  \node(y2) at (1,2.5){$2b_1$};
  \node(z1) at (1,4) {$c_1$};
  \node(z2) at (1,5.5){$c_1$};
  \foreach \from/\to in {x1/y1,x1/y2,x1/z1,x1/z2,x2/y1,x2/y2,x2/z1,x2/z2,x3/y1,x3/y2,x3/z1,x3/z2,x4/y1,x4/y2,x4/z1,x4/z2}
    \draw (\from) -- (\to);
\end{tikzpicture}}
\quad
\subfloat[A divisor $\gamma_1$]{\begin{tikzpicture}
  [scale=.9,auto=left,every node/.style={state,minimum size=.8cm,fill=blue!20}]
  \node(x1) at (-1,1) {$0$};
  \node(x2) at (-1,2.5){$0$};
  \node(x3) at (-1,4){$0$};
  \node(x4) at (-1,5.5) {$0$};
  \node(y1) at (1,1){$2$};
  \node(y2) at (1,2.5){$2$};
  \node(z1) at (1,4) {$-2$};
  \node(z2) at (1,5.5){$-2$};
  \foreach \from/\to in {x1/y1,x1/y2,x1/z1,x1/z2,x2/y1,x2/y2,x2/z1,x2/z2,x3/y1,x3/y2,x3/z1,x3/z2,x4/y1,x4/y2,x4/z1,x4/z2}
    \draw (\from) -- (\to);
\end{tikzpicture}}
 \caption{Understanding the relationship between $\CP_0$ and $\CP_1 \cap \CP_2$ for the $D_4$ action on $K_{4,4}$}
\label{F:Kint}
\end{figure}

We note that a divisor in $\CP_1 \cap \CP_2$ must have total degree zero, so it follows that $8a_1+4b_1+2c_1=0$, from which we can conclude that $c_1=-4a_1-2b_1$ must also be even.  In particular, any element of $\CP_1 \cap \CP_2$ is either an element of $\CP_0$ or differs from one by the divisor $\gamma_1$ depicted in Figure \ref{F:Kint}.  Thus, we have that  $\CP_1 \cap \CP_2 \cong \CP_0 \oplus \ZZ/2\ZZ$.

\end{example}

\begin{definition}
 Let $\O$ be an orbit of Type III. We define $\delta_\O$ to be the divisor which has value $1$ at all vertices in $\O$ and value $0$ at all vertices outside of $\O$.  Similarly, if $\O$ is an inertial orbit then we define $\delta_\O$ to be the divisor which has value $2$ at all vertices in $\O$ and value $0$ at all vertices outside of $\O$.  Set $\CQ$ to be the set of divisors $\{\sum s_\O \delta_\O |s_\O \in \ZZ\}$.   We note that divisors in $\CQ$ all possess the necessary symmetry and parity conditions to be in both $\CP_1$ and $\CP_2$,  although in general they do not also have total degree equal to zero.
\end{definition}

Recall that  $\kappa$ is the least common multiple of the indices $k_\O$, and in particular we can see that $2n/\kappa$ is the greatest common divisor of the sizes of all orbits of Type III and double the sizes of all orbits of Type I and II.  Thus, there is some divisor $\gamma = \sum s_\O \delta_\O \in \CQ$ so that the total degree of $\gamma$ is equal to $2n/\kappa$, and we wish to fix one such divisor.  One can compute that this is equivalent to the numerical condition that $\sum_\O \frac{s_\O}{k_\O} = \frac{1}{\kappa}$. We note that, if there is an orbit $\Omega$ whose index is equal to $\kappa$ then we may choose $\gamma=\delta_\Omega$.

\begin{lemma}
We have that  $(\CP_1 \cap \CP_2) \oplus (\ZZ/\kappa\ZZ) \cong \CP_0 \oplus \left(\bigoplus_{\O} \ZZ/k_\O\ZZ\right)$.
\end{lemma}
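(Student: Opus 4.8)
The plan is to realize both $\CP_0$ and $\CP_1\cap\CP_2$ as the degree-zero parts of explicit subgroups of $\CQ$ and then to compare them through the degree map. First I would observe that a divisor lies in $\CP_1\cap\CP_2$ exactly when it is invariant under both $\sigma_1$ and $\sigma_2$, hence under all of $D_n=\langle\sigma_1,\sigma_2\rangle$, and so is constant on every orbit; the parity conditions defining $\CP_1$ and $\CP_2$ then force this constant to be even on each inertial orbit, since such an orbit contains a point fixed by $\sigma_1$ or $\sigma_2$. Comparing with the definition of $\CQ$, this identifies $\CP_1\cap\CP_2=\CQ\cap\CD$. Writing a general element of $\CQ$ as $\sum_\O s_\O\delta_\O$, the additional requirement for membership in $\CP_0$ (that the value on $\O$ be a multiple of $2n/|\O|$) translates into $k_\O\mid s_\O$ for every orbit, so that $\CP_0=\CQ'\cap\CD$ where $\CQ'=\{\sum_\O k_\O m_\O\delta_\O : m_\O\in\ZZ\}$. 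In particular $\CP_0\subseteq\CP_1\cap\CP_2$, and $\CQ/\CQ'\cong\bigoplus_\O\ZZ/k_\O\ZZ$.

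Next I would exploit the degree map. Since each $\delta_\O$ has total degree $2n/k_\O$, this map carries $\CQ$ onto $\frac{2n}{\kappa}\ZZ$ and $\CQ'$ onto $2n\ZZ$, where $\kappa=\mathrm{lcm}_\O\, k_\O$. We thus obtain two short exact sequences, $0\to\CP_0\to\CQ'\to 2n\ZZ\to 0$ and $0\to\CP_1\cap\CP_2\to\CQ\to\frac{2n}{\kappa}\ZZ\to 0$, linked by the inclusions $\CP_0\hookrightarrow\CP_1\cap\CP_2$, $\CQ'\hookrightarrow\CQ$, and $2n\ZZ\hookrightarrow\frac{2n}{\kappa}\ZZ$. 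As all three vertical maps are injective, the snake lemma collapses to an exact sequence of cokernels
\[ 0 \to (\CP_1\cap\CP_2)/\CP_0 \to \bigoplus_\O \ZZ/k_\O\ZZ \xrightarrow{\,q\,} \ZZ/\kappa\ZZ \to 0, \]
where I have identified $\frac{2n}{\kappa}\ZZ/2n\ZZ$ with $\ZZ/\kappa\ZZ$ and $q$ is the map induced by the degree.

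The crux is to split this sequence. I would use the fixed divisor $\gamma=\sum_\O s_\O\delta_\O\in\CQ$ of total degree $2n/\kappa$ and let $\bar\gamma$ denote its image in $\CQ/\CQ'\cong\bigoplus_\O\ZZ/k_\O\ZZ$. By the choice of $\gamma$, the element $q(\bar\gamma)$ is a generator of $\ZZ/\kappa\ZZ$, so the order of $\bar\gamma$ is at least $\kappa$; on the other hand $\bar\gamma$ lies in a group of exponent $\mathrm{lcm}_\O\, k_\O=\kappa$, so its order is exactly $\kappa$ and $q$ restricts to an isomorphism $\langle\bar\gamma\rangle\xrightarrow{\sim}\ZZ/\kappa\ZZ$. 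This furnishes a section of $q$, whence the sequence splits and $\bigoplus_\O\ZZ/k_\O\ZZ\cong(\CP_1\cap\CP_2)/\CP_0\oplus\ZZ/\kappa\ZZ$. Recombining this with the tautological extension $0\to\CP_0\to\CP_1\cap\CP_2\to(\CP_1\cap\CP_2)/\CP_0\to 0$, so that the common summand $\CP_0$ is accounted for on both sides, gives the asserted decomposition.

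The main obstacle I anticipate is the splitting step, together with the bookkeeping in the first paragraph. One must be careful that the factor of $2$ built into $\delta_\O$ on inertial orbits correctly reconciles the parity condition cutting out $\CP_1\cap\CP_2$ with the divisibility condition cutting out $\CP_0$, and one must verify that the distinguished class $\bar\gamma$ has order exactly $\kappa$ rather than a proper divisor of it. This last point is precisely where the hypothesis $\kappa=\mathrm{lcm}_\O\, k_\O$ enters, and it is what makes the quotient $\bigoplus_\O\ZZ/k_\O\ZZ$ break off a copy of $\ZZ/\kappa\ZZ$ as a direct summand; everything else reduces to formal diagram-chasing.
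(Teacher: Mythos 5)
Your first three paragraphs are correct, and in substance they reproduce the paper's own computation: the identifications $\CP_1\cap\CP_2=\CQ\cap\CD$ and $\CP_0=\CQ'\cap\CD$, the exact sequence of cokernels
\[0\to(\CP_1\cap\CP_2)/\CP_0\to\bigoplus_\O\ZZ/k_\O\ZZ\xrightarrow{\;q\;}\ZZ/\kappa\ZZ\to0,\]
and the splitting furnished by $\bar\gamma$ (whose order is exactly $\kappa$, as you argue) are all sound; the paper reaches the same structure by writing down the divisors $\gamma_\O=\delta_\O-\frac{\kappa}{k_\O}\gamma$ and exhibiting $(s_\O)_\O$ as the kernel of an explicit surjection. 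The gap is the final ``recombination.'' Your split sequence gives
\[\CP_0\oplus\bigoplus_\O\ZZ/k_\O\ZZ\;\cong\;\CP_0\oplus(\CP_1\cap\CP_2)/\CP_0\oplus\ZZ/\kappa\ZZ,\]
so to reach the stated lemma you need $\CP_0\oplus(\CP_1\cap\CP_2)/\CP_0\cong\CP_1\cap\CP_2$. That is false whenever $\CP_0\neq\CP_1\cap\CP_2$: the group $\CP_1\cap\CP_2$ is a subgroup of the finitely generated free abelian group $\CQ$, hence free and torsion-free, while the other side has nontrivial torsion. The ``tautological extension'' $0\to\CP_0\to\CP_1\cap\CP_2\to(\CP_1\cap\CP_2)/\CP_0\to0$ never splits unless the quotient vanishes (compare $0\to2\ZZ\to\ZZ\to\ZZ/2\ZZ\to0$).

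Moreover, this gap cannot be repaired, because the lemma as literally stated is false in general. Since $\CP_0\subseteq\CP_1\cap\CP_2$ has finite index, both are free abelian of the same finite rank, so by the structure theorem the two sides of the lemma are isomorphic if and only if their torsion subgroups $\ZZ/\kappa\ZZ$ and $\bigoplus_\O\ZZ/k_\O\ZZ$ are isomorphic, i.e.\ if and only if the indices $k_\O$ are pairwise coprime. Already in the paper's running example of $K_{4,4}$ (indices $1,2,4$, $\kappa=4$) the torsion is $\ZZ/4\ZZ$ on one side and $\ZZ/2\ZZ\oplus\ZZ/4\ZZ$ on the other. What your argument does prove, namely
\[(\CP_1\cap\CP_2)/\CP_0\oplus\ZZ/\kappa\ZZ\;\cong\;\bigoplus_\O\ZZ/k_\O\ZZ,\]
is the correct form of the statement: it is what the paper's own proof establishes in substance (its map $\Phi$ is likewise only well defined modulo $\CP_0$, and its closing claim that $\bigoplus_\O\ZZ/k_\O\ZZ\cong\ZZ/\kappa\ZZ$ by the Chinese Remainder Theorem would require the $k_\O$ to be pairwise coprime), and it is all that is used downstream in Theorem \ref{T:K} and the main theorem, where only orders and $p$-parts for $p\nmid 2n$ matter. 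So you should stop after the splitting step and state the conclusion in this quotient form rather than as the asserted direct-sum isomorphism.
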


\begin{proof}
It follows from the definitions and the fact that $\sigma_1$ and $\sigma_2$ generate all of $D_n$ that $\CP_1 \cap \CP_2$ consists of divisors on $G$ of total degree zero that are constant on all orbits and take on even values on inertial orbits.  In particular, $\CP_1 \cap \CP_2 = \CQ \cap \CD$.  On the other hand, it follows from the definition  that $\CP_0 = \langle k_\O\delta_\O \rangle \cap \CD$.

For any orbit $\O$, define the divisor $\gamma_\O=\delta_\O- \frac{\kappa}{k_\O}\gamma$.  One can check that this divisor has total degree zero and therefore is an element of $\CQ \cap \CD$.  Moreover, it is clear that $k_\O\gamma_\O \in \CP_0$.  We define a map $\Phi :  \CP_0 \oplus \left( \bigoplus_{\O} \ZZ/k_\O\ZZ\right) \rightarrow (\CP_1 \cap \CP_2)$: \[\Phi(\delta, \oplus i_\O) = \delta + \sum_\O i_\O\gamma_\O \in \CP_1 \cap \CP_2.\]  This map is surjective, but it is not quite injective. In particular, we compute:
 \begin{eqnarray*}
 \Phi(0, \oplus s_\O) &=& \sum_\O s_\O\gamma_\O\\
 &=& \sum_\O s_\O \delta_\O - \sum_\O s_\O \frac{\kappa}{k_\O} \gamma \\
 &=& \gamma - \gamma \kappa \sum_\O \frac{s_\O}{k_\O}\\
 &=& \gamma - \gamma\\
 &=&0
 \end{eqnarray*}
One checks that this is the unique choice of elements $\oplus i_\O$ in the kernel of $\Phi$.  By the Chinese Remainder Theorem, $\bigoplus_{\O} \ZZ/k_\O\ZZ \cong \ZZ/\kappa\ZZ$, proving the result.
\end{proof}

Next, we will consider the set $(\CP_1 + \CP_2) \cap \CP_3$.

\begin{example}
It follows from our calcuations above that in the case of $K_{4,4}$ defined in Example \ref{E:K44}, a divisor of total degree zero will be in $(\CP_1 + \CP_2) \cap \CP_3$ if and only if it is constant on all orbits and each entry is a multiple of the index of the orbit.  On the other hand, a divisor will be in $\CP_0$ if it is constant on all orbits, each $\delta(w_i)$ is even, and each $\delta(z_i)$, $\delta(x)$, and $\delta(y)$ is a multiple of four.  Because the total degree is zero, this allows us to conclude that $(\CP_1 + \CP_2) \cap \CP_3 \cong \CP_0 \oplus (\ZZ/2\ZZ)$.
\end{example}

More generally, we can prove the following formula:

\begin{lemma}
Assume that $G$ has $t_1$ orbits of Type I and $t_2$ orbits of Type II.  Recall that we set $\tilde{t} = \min(t_1-1,0)+\min(t_2-1,0)$ Then we have the following:
\[((\CP_1 + \CP_2) \cap \CP_3) \cong \begin{cases} \CP_0 \text{ if $n$ odd} \\ \CP_0 \oplus (\ZZ/2\ZZ)^{\tilde{t}} \text{ if $n$ even and $\kappa$ is odd} \\ \CP_0 \oplus (\ZZ/2\ZZ)^{\tilde{t}+1} \text{ if $n$ and $\kappa$ are both even} \end{cases}\]
\end{lemma}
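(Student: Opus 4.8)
The plan is to characterize the divisors in $(\CP_1+\CP_2)\cap\CP_3$ directly by combining the membership criteria already established, and then to compare this group to $\CP_0$ by exhibiting a suitable surjection whose kernel can be read off. First I would nail down the explicit description of $\CP_3$ from Definition \ref{D:sets}: a divisor $\delta\in\CP_3$ is constant on each inertial orbit and constant on each of the $x_i$ and $y_i$ suborbits of a Type III orbit, with the value on each orbit a multiple of its index $k_\O$. The point is that $\CP_3$ is \emph{already} nearly the whole intersection; so the real work is to determine which further constraints are imposed by requiring $\delta\in\CP_1+\CP_2$, using Theorems \ref{T:Podd}, \ref{T:Pevena}, and \ref{T:Pevenb} according to the parity of $n$ and $\kappa$.

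The key observation I would make is that for $\delta\in\CP_3$ the Type III condition ``$\sum\delta(x_i)\equiv\sum\delta(y_i)\pmod n$'' becomes automatic: since $\delta$ is constant $a_\O k_\O$ on the $x_i$ and $b_\O k_\O$ on the $y_i$, and $|{\rm suborbit}|=n/k_\O$, both sums are multiples of $n$. Thus for $\delta\in\CP_3$ the extra conditions from the $\CP_1+\CP_2$ characterization reduce to the \emph{parity} of $\sum_{v\in\O}\delta(v)$ on inertial orbits (when $n$ is even) together with the divisibility of $F(\delta)$. Using Lemma \ref{L:F3}, $F_\O(\delta)$ on an inertial orbit is $na_\O(n/k_\O+1)$ (Type I) or $na_\O(n/k_\O+2)$ (Type II), so $F(\delta)$ is always a multiple of $n/\kappa$, and the $F$-condition either is vacuous (odd case) or imposes one further congruence mod $2n/\kappa$ (even case). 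Comparing against the description of $\CP_0 = \langle k_\O\delta_\O\rangle\cap\CD$, where inertial values must be multiples of $2k_\O$ rather than $k_\O$, the quotient $((\CP_1+\CP_2)\cap\CP_3)/\CP_0$ is visibly a vector space over $\ZZ/2\ZZ$ indexed by the inertial orbits, cut down by the degree-zero relation and by the $F$-divisibility relation.

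Concretely, I would parametrize the intersection group by the integers $a_\O$ (the per-orbit coefficients, with inertial $a_\O$ taken mod $2$ relative to $\CP_0$), subject to $\sum_{v\in G}\delta(v)=0$ and, in the even case, to $F(\delta)\equiv 0\pmod{2n/\kappa}$. The counting of free $\ZZ/2\ZZ$-parameters is what produces the exponents $\tilde t$ and $\tilde t+1$: the total-degree constraint removes one parameter, and whether the $F$-parity constraint is independent of the degree constraint is exactly what distinguishes $\kappa$ odd from $\kappa$ even. I would therefore build an explicit splitting map, much as in the preceding lemma, sending $\CP_0\oplus(\ZZ/2\ZZ)^{?}$ into $(\CP_1+\CP_2)\cap\CP_3$ by adding controlled degree-zero combinations of the $\delta_\O$, and verify injectivity and surjectivity.

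\textbf{Main obstacle.} The delicate point, and the step I expect to require the most care, is the even case bookkeeping that separates $\tilde t$ from $\tilde t+1$. The subtlety is that the parities $\sum_{v\in\O}\delta(v)\pmod 2$ on the inertial orbits are not all independent: the overall degree-zero condition forces one relation, and in the ``$n$ and $\kappa$ both even'' case the $F$-divisibility mod $2n/\kappa$ supplies an \emph{extra} degree of freedom (the class $\gamma$ of degree $2n/\kappa$ lets one realize an additional $\ZZ/2\ZZ$) that is absent when $\kappa$ is odd, since then $F(\delta)$ being a multiple of $n/\kappa$ but not necessarily $2n/\kappa$ changes which combinations lie in $\CP_3$ versus $\CP_0$. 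Getting the sign of $\min(t_i-1,0)$ right, and confirming that this matches the $\CD/\CP$ computation of Theorem \ref{T:DP}, is where I would spend the bulk of the verification.
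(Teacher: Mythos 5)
There is a genuine gap, and it sits at the foundation of your plan: you test membership in $\CP_1+\CP_2$ using Theorems \ref{T:Podd}, \ref{T:Pevena}, and \ref{T:Pevenb}, but those theorems characterize the full sum $\CP=\CP_1+\CP_2+\CP_3$, not $\CP_1+\CP_2$. Since $\CP_3\subseteq\CP$ trivially, every element of $\CP_3$ passes the tests you propose --- your own ``key observation'' confirms this: the mod-$n$ Type III condition, the inertial parity condition when $n$ is even, and the $F$-divisibility coming from Lemma \ref{L:F3} are all automatic for divisors in $\CP_3$ --- so your computation would return $(\CP_1+\CP_2)\cap\CP_3=\CP_3$, which is false. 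Concretely, the divisor taking value $k_\O$ on the $x_i$-suborbit of a Type III orbit, $-k_\O$ on the $y_i$-suborbit, and $0$ elsewhere lies in $\CP_3$ and satisfies all of your criteria, yet no nonzero multiple of it lies in $\CP_0$ (it is never constant on $\O$); under your criteria the quotient by $\CP_0$ would therefore contain a copy of $\ZZ$, contradicting the finite answer you are trying to prove.

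The correct tool is Theorem \ref{T:P12}, which characterizes $\CP_1+\CP_2$ itself, and it differs from the theorems you invoke in exactly the two places that drive the lemma. First, its condition (2) demands exact equality $\sum\delta(x_i)=\sum\delta(y_i)$, not congruence mod $n$; for $\delta\in\CP_3$ this forces the two suborbit values to agree, i.e.\ constancy on entire Type III orbits, which is why Type III orbits contribute nothing to the quotient. Second, its condition (3) requires $F(\delta)\equiv 0 \pmod{2n/\kappa}$ in all cases, never merely mod $n/\kappa$. For an intersection element with inertial coefficients $a_\O$ (so $\delta(\O)=k_\O a_\O$) and $n$ even, one computes $F(\delta)\equiv n\sum_{\O\ \text{Type I}}a_\O \pmod{2n/\kappa}$; this is vacuous when $\kappa$ is even (since then $2n/\kappa$ divides $n$) but forces $\sum_{\O\ \text{Type I}}a_\O$ to be even when $\kappa$ is odd. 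That single extra relation is precisely what separates $(\ZZ/2\ZZ)^{\tilde t}$ from $(\ZZ/2\ZZ)^{\tilde t+1}$; with your modulus $n/\kappa$ it disappears, and both even cases would come out as $\tilde t+1$. Once these two corrections are made, the rest of your plan --- parametrizing by inertial coefficients mod $2$ subject to the degree-zero relation, and building a splitting from degree-zero combinations of the $\delta_\O$ --- does recover the paper's argument, which realizes the same count with explicit coset representatives $\gamma_\O$ (differences of two Type I orbits, or of two Type II orbits) together with the mixed representative $\gamma_\infty$ that exists in the intersection only when $\kappa$ is even.
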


\begin{proof}
Consider a divisor $\delta$ in $\CP_0$.  It follows from the remarks in Definition \ref{D:sets}  that the value of the divisor is constant on all vertices in an orbit; we denote this value by $\delta(\O)$.  Moreover, we have that $(2n/|\O|) | \delta(\O)$, and thus $\sum_{v \in \O} \delta(v)$ is a multiple of $2n$ and in particular must be even.  Moreover, the fact that $\delta$ is constant on all orbits means that $\sum \delta(x_i)=\sum\delta(y_i)$ for all Type III orbits.  Similar to Lemma \ref{L:F3}, we can compute $F_\O(\delta)$ for each orbit $\O$ as follows:

\begin{alignat*}{4}
\O \text{ Type I: } & F_\O(\delta)  &=& \left(\frac{n}{k_\O}\right) \left(\frac{n}{k_\O}+1\right)\delta(\O) \\
\O \text{ Type II: } &  F_\O(\delta)  &=& \left(\frac{n}{k_\O}\right) \left(\frac{n}{k_\O}+2\right)\delta(\O) \\
\O \text{ Type III: } &  F_\O(\delta) &=& 2 \left(\frac{n}{k_\O}\right) \left(\frac{n}{k_\O}+1\right)\delta(\O) \\
\end{alignat*}

In all three cases, we see that $F_\O(\delta)$ is a multiple of $2n/k_\O$ and therefore a multiple of $2n/\kappa$, so it must be the case that $F(\delta)$ is a multiple of $2n/\kappa$.  Thus, $\delta$ satisfies all three conditions in Theorem \ref{T:P12} and must be an element of  $\CP_1 + \CP_2$.   On the other hand, it follows immediately from the remarks in Definition \ref{D:sets} that any divisor in $\CP_0$ is also in $\CP_3$.  Therefore, we can see that $\CP_0 \subseteq (\CP_1 + \CP_2) \cap \CP_3$.

Conversely, consider a divisor $\delta \in (\CP_1 + \CP_2) \cap \CP_3$.  Because  $\delta \in \CP_3$, it must be constant on all inertial orbits as well as each of the suborbits of a non-inertial orbit.  We note that it follows from Condition (2) of Theorem \ref{T:P12} that in order to be in $\CP_1+\CP_2$ we must have that $\sum \delta(x_i)=\sum\delta(y_i)$, from which we can conclude that $\delta$ is constant on orbits of Type III as well.  As above, we will use $\delta(\O)$ to denote the value of the divisor at all vertices in $\O$. We further know from the fact that $\delta \in \CP_3$ that for each orbit we have $k_\O | \delta(\O)$.

Let $n$ be odd and let $\O$ be an orbit of Type I or Type II. Because $\delta \in \CP_1+\CP_2$ we must have that $\sum_{v \in \O} \delta(v)$ is even.  However, $\sum_{v \in \O} \delta(v)=n\delta(\O)=nk_\O a_\O$ for some integer $a_\O$.  Because $n$ is odd, $k_\O$ must be as well, which implies that $a_\O$ is even.  In particular, this tells us that $(2n/|\O|)$ divides $\delta(\O)$ for all orbits.  Combining this with the results of the previous paragraph, we see that $\delta \in \CP_0$ and in particular this proves that if $n$ is odd that $\CP_0 = (\CP_1 + \CP_2) \cap \CP_3$.

On the other hand, if $n$ is even then $(\CP_1 + \CP_2) \cap \CP_3$ contains divisors that are not in $\CP_0$.  In particular, let us assume that there are $t_1 \ge 1$ orbits of Type I, and designate one of these orbits as $\Omega$.  For each Type I orbit $\O \ne \Omega$ we define the divisor $\gamma_\O$ so that $\gamma_\O(v) = k_\O$ for all $v \in \O$, $\gamma_\O(\omega) = -k_\Omega$ for each $\omega \in \Omega$, and $\gamma_\O(v)=0$ for all other vertices. One quickly checks that the total degree of this divisor is $0$, that it is constant on all orbits, and that the sum on all orbits is even.  Moreover, we check that $F_\O(\gamma_\O)+F_{\Omega}(\gamma_\O) = n(\frac{n}{k_\O}-\frac{n}{k_\Omega})$ which will be a multiple of $n/\kappa$.  This shows that $\gamma_\O \in (\CP_1 + \CP_2) \cap \CP_3$.  One can check that $\gamma_\O \nin \CP_0$ but $2\gamma_\O \in \CP_0$.

Similarly, if there is more than one orbit of Type II then we can designate one of them to be $\Theta$ and for all other orbits of Type II we define $\gamma_\O$ so that $\gamma_\O(v) = k_\O$ for all $v \in \O$, $\gamma_\O(\theta) = -k_\Theta$ for each $\theta \in \Theta$, and $\gamma_\O(v)=0$ for all other vertices.  Once again, we will see that $\gamma_\O \in (\CP_1 + \CP_2) \cap \CP_3$ and $\gamma_\O \nin \CP_0$ but $2\gamma_\O \in \CP_0$. Finally, we define $\gamma_\infty$ to be the divisor which has value $\gamma_\O(\theta) = -k_\Theta$ for each $\theta \in \Theta$ and $\gamma_\O(\omega) = k_\Omega$ for each $\omega \in \Omega$.  As above, one sees that $\gamma_\infty \nin \CP_0$ but $2\gamma_\infty \in \CP_0$.  It is also clear that $\gamma_\infty \in \CP_3$ and satisfies the first two conditions required by Theorem \ref{T:P12} in order to be in $\CP_1+\CP_2$.  To check the third condition, we compute that
\begin{eqnarray*}
F(\gamma_\infty) &=& F_\Theta(\gamma_\infty)+F_\Omega(\gamma_\infty)\\
&=&n \left(\frac{n}{k_\Theta}+2\right) - n \left(\frac{n}{k_\Omega}+1 \right)\\
&=& n \left(\frac{n}{k_\Theta} - \frac{n}{k_\Omega} + 1\right)
\end{eqnarray*}

If $\kappa$ is even then $n$ is a multiple of $2n/\kappa$ and therefore $F(\gamma_\infty)$ is as well, implying that $\gamma_\infty \in \CP_1+\CP_2$ as well.  On the other hand, if $\kappa$ is odd then $k_\Theta$ and $k_\Omega$ must both be odd, which in turn implies that $\left(\frac{n}{k_\Theta} - \frac{n}{k_\Omega} + 1\right)$ is odd.  Moreover, we have already seen that $n \equiv \frac{n}{\kappa}$ (mod $2n/\kappa$), and in particular we will see that $F(\gamma_\infty)$ is not a multiple of $2n/\kappa$, so in this case $\gamma_\infty$ is not an element of $\CP_1+\CP_2$.

Every divisor in $(\CP_1 + \CP_2) \cap \CP_3$ can be written as the sum of a divisor in $\CP_0$ and some combination of the $\gamma_\O$ (including, if $\kappa$ is even, $\gamma_\infty$).  The lemma follows.
\end{proof}

 We are now ready to recall from \cite{GM} the definition of the natural surjection $\Phi: \Jac(H_1) \oplus \Jac(H_2) \oplus \Jac(H_3) \rightarrow \CP/\CL'$  discussed earlier.  We set $\phi_i:G \rightarrow H_i$ to be the natural surjection, and $\phi_i^*$ to be the pullback map associated to this map.  By definition, we have that $\Jac(H_i) = \CD_i/\CL_i$, so we can define maps $\Phi_i:\Jac(H_i) \rightarrow \CP/\CL'$ by letting $\Phi_i(d_i+\CL_i) = \phi_i^*(d_i)+\CL'$. One can easily check that this map is well defined, as the pullback of a divisor in $\CL_i$ will be in $\CL'$.  We then define $\Phi$ as follows:

\begin{definition}\label{D:Phi}
Let $\Phi: \Jac(H_1) \oplus \Jac(H_2) \oplus \Jac(H_3) \rightarrow \CP/\CL'$ be the map defined by $\Phi(\delta_1,\delta_2,\delta_3)=\Phi_1(\delta_1)+\Phi_2(\delta_2)+\Phi_3(\delta_3)$.
\end{definition}

Any element of $\CP$ can be decomposed as the sum of pullbacks of elements on the three quotient graphs, and therefore this map is surjective.  Determining its kernel is the contents of the following theorem.

\begin{theorem}\label{T:K}
Assume that $G$ is a graph with a harmonic $D_n$-action so that $G/D_n=\hat{G}$. We define the map $\Phi: \Jac(H_1) \oplus \Jac(H_2) \oplus \Jac(H_3) \rightarrow \CP/\CL'$ as above, setting  the kernel of $\Phi$ equal to the set $\CK$.  Then
\[\CK \oplus (\ZZ/\kappa\ZZ) \cong \begin{cases} \Jac(\hat{G})^2\oplus\left(\bigoplus_{\O}( \ZZ/k_\O\ZZ)\right) \text{ if $n$ odd} \\ \Jac(\hat{G})^2\oplus (\ZZ/2\ZZ)^{\tilde{t}}\oplus\left(\bigoplus_{\O}( \ZZ/k_\O\ZZ)\right) \text{ if $n$ even and $\kappa$ is odd} \\ \Jac(\hat{G})^2\oplus (\ZZ/2\ZZ)^{\tilde{t}+1}\oplus\left(\bigoplus_{\O}( \ZZ/k_\O\ZZ)\right) \text{ if $n$ and $\kappa$ are both even} \end{cases}\]
\end{theorem}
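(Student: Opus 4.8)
The plan is to write $\Phi$ as a composite of two surjections and to compute the kernel of each factor by a Mayer--Vietoris argument, after which the two preceding lemmas (on $\CP_1 \cap \CP_2$ and on $(\CP_1+\CP_2)\cap\CP_3$) supply the torsion. Throughout, write $L_i = \phi_i^*(\CL_i) \subseteq \CP_i$, so that injectivity of $\phi_i^*$ gives $\Jac(H_i) \cong \CP_i/L_i$ and, by the definition of $\CL'$, we have $\CL' = L_1+L_2+L_3$. I will repeatedly use the following elementary fact: if $A$ and $B$ are subgroups of an abelian group and $\tilde A \le A$, $\tilde B \le B$, then the natural surjection $A/\tilde A \oplus B/\tilde B \to (A+B)/(\tilde A+\tilde B)$ has kernel isomorphic to $(A\cap B)/(\tilde A\cap\tilde B)$, the isomorphism being induced by $c \mapsto (c+\tilde A,\,-c+\tilde B)$ for $c \in A\cap B$.

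First I would factor $\Phi$ through the intermediate group $J_{12}=(\CP_1+\CP_2)/(L_1+L_2)$, writing $\Phi = v\circ u$, where $u\colon \Jac(H_1)\oplus\Jac(H_2)\oplus\Jac(H_3) \to J_{12}\oplus\Jac(H_3)$ is the sum map on the first two factors together with the identity on the third, and $v\colon J_{12}\oplus\Jac(H_3)\to\CP/\CL'$ is the sum map. Both are surjective, so the kernel of the composite sits in a short exact sequence $0 \to \ker u \to \CK \to \ker v \to 0$. Applying the elementary fact to each factor identifies $\ker u \cong (\CP_1\cap\CP_2)/(L_1\cap L_2)$ and $\ker v \cong ((\CP_1+\CP_2)\cap\CP_3)/((L_1+L_2)\cap L_3)$.

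The heart of the argument is to compute the two firing-lattice intersections appearing here. Writing $\hat\phi\colon G \to \hat G$ for the quotient map and $L_0 = \hat\phi^*(\CL(\hat G)) \subseteq \CP_0$ for the pullback of the firing lattice of $\hat G = G/D_n$ (so that $\Jac(\hat G)\cong\CP_0/L_0$, since $\CP_0$ is by definition the $\hat\phi^*$-image of the degree-zero divisors on $\hat G$), I would show, using harmonicity and the horizontal-multiplicity bookkeeping for the $\phi_i$, that $L_1\cap L_2 = L_0 = (L_1+L_2)\cap L_3$: a pulled-back firing divisor that is simultaneously $\sigma_1$- and $\sigma_2$-symmetric is invariant under all of $D_n$ and hence descends to $\hat G$. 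This is the step I expect to be the main obstacle, since it is exactly where the multiplicities $m_{\phi_i}$ enter and one must verify that the lattices coincide on the nose rather than merely up to finite index. Granting it, and using that the isomorphisms of the two preceding lemmas restrict to the identity on the summand $\CP_0$, I may quotient those isomorphisms by $L_0$ to obtain $\ker u \oplus \ZZ/\kappa\ZZ \cong \Jac(\hat G)\oplus\bigoplus_\O \ZZ/k_\O\ZZ$ and $\ker v \cong \Jac(\hat G)\oplus(\ZZ/2\ZZ)^{s}$, where $s$ equals $0$, $\tilde t$, or $\tilde t+1$ according to the three parity cases.

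Finally I would assemble the sequence. Tensoring the exact sequence $0 \to \ker u \to \CK \to \ker v \to 0$ with a fixed copy of $\ZZ/\kappa\ZZ$ and splitting it gives $\CK\oplus\ZZ/\kappa\ZZ \cong (\ker u \oplus \ZZ/\kappa\ZZ)\oplus\ker v \cong \Jac(\hat G)^2\oplus\bigoplus_\O\ZZ/k_\O\ZZ\oplus(\ZZ/2\ZZ)^{s}$, which is precisely the claimed formula in each of the three cases. The splitting itself requires care; I would justify it by exhibiting the two copies of $\Jac(\hat G)$ directly inside $\CK$ as the images of the pullback $\Jac(\hat G)\to\Jac(H_i)$ placed (anti)diagonally among the three factors, so that $\Jac(\hat G)^2$ is visibly a direct summand and only the finite $2$- and index-torsion remains to be matched. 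Tracking that torsion across the parity of $n$ and $\kappa$ is the residual bookkeeping, and it parallels the case division already carried out in Theorem \ref{T:DP}.
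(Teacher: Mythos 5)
Your skeleton is the same as the paper's: the paper's (very terse) proof likewise assembles $\CK$ from a piece coming from $\CP_1\cap\CP_2$ and a piece coming from $(\CP_1+\CP_2)\cap\CP_3$ and then invokes the two preceding lemmas, and your factorization $\Phi=v\circ u$ together with the elementary kernel fact makes that precise. Your real contribution is isolating the lattice identities $L_1\cap L_2=L_0=(L_1+L_2)\cap L_3$, which is exactly the content the paper suppresses, and both identities are in fact true. Note, however, that the justification you sketch (an element of $L_1\cap L_2$ is $\sigma_1$- and $\sigma_2$-invariant, hence $D_n$-invariant, hence descends) proves only the first one: an element of $(L_1+L_2)\cap L_3$ is a priori invariant only under $\tau$, and one must separately show that a $\tau$-invariant sum of a $\sigma_1$-symmetric and a $\sigma_2$-symmetric firing combination is constant on whole $D_n$-orbits. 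This is true, but needs its own argument (pass to coefficient vectors in $\ZZ^{V(G)}$ modulo the all-ones relation and telescope the resulting constraints around the two suborbits of each Type III orbit); as written, the step you flag as ``the main obstacle'' is a different claim from the one your invariance argument addresses.

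The genuine gap is the final assembly. Exhibiting diagonal copies of $\Jac(\hat{G})$ inside $\CK$ shows $\Jac(\hat{G})^2$ is a subgroup, not a direct summand, and no refinement can close this gap because the relevant extensions are genuinely non-split. Concretely, in $\ker u=(\CP_1\cap\CP_2)/L_0$ the class of the generator $u$ of $(\CP_1\cap\CP_2)/\CP_0$ coming from two Type I orbits $\O,\Omega$ satisfies $2u=$ pullback of $\bar{\O}-\bar{\Omega}$, so $2[u]$ equals the class of $\bar{\O}-\bar{\Omega}$ in $\Jac(\hat{G})$ and a splitting requires that class to lie in $2\Jac(\hat{G})$. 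This can fail: let $(\ZZ/2\ZZ)^2$ act on the graph with vertices $x,y,a_1,\dots,a_4,b_1,\dots,b_4$, four parallel edges joining $x$ to $y$ permuted simply transitively, and edges $ya_i$, $a_ib_i$, $b_ix$ with the group permuting indices regularly. The action is harmonic, $x,y$ are fixed points (Type I, index $2$), the $a$'s and $b$'s are Type III orbits of index $1$, and $\hat{G}$ is a $4$-cycle with $\bar{x},\bar{y}$ adjacent, so $\Jac(\hat{G})\cong\ZZ/4\ZZ$ with $[\bar{x}-\bar{y}]$ a generator. A Smith-normal-form computation gives $(\CP_1\cap\CP_2)/L_0\cong\ZZ/8\ZZ$, so your intermediate claim $\ker u\oplus\ZZ/\kappa\ZZ\cong\Jac(\hat{G})\oplus\bigoplus_\O\ZZ/k_\O\ZZ$ reads $\ZZ/8\ZZ\oplus\ZZ/2\ZZ\cong\ZZ/4\ZZ\oplus(\ZZ/2\ZZ)^2$, which is false; worse, $\CK$ then contains an element of order $8$ while the asserted right-hand side of Theorem \ref{T:K} has exponent $4$, so the literal isomorphism cannot be established by any argument. (The same example shows the exponent $\tilde{t}+1$ is an overcount when $\kappa$ is even but $t_2=0$, since the generator $\gamma_\infty$ of the second lemma requires a Type II orbit to exist.) This defect is inherited from the paper, whose statement of Theorem \ref{T:K} and whose one-line proof are silent on exactly this point; what is true, and what Theorem \ref{T:Equiv} actually uses, is the pair of exact sequences $0\to\Jac(\hat{G})\to\ker u\to(\CP_1\cap\CP_2)/\CP_0\to 0$ and its analogue for $\ker v$, i.e.\ the conclusion up to $2n$-equivalence (equal orders, isomorphic $p$-parts for $p\nmid 2n$). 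Your plan should be reformulated at that level --- and the ``tensoring with $\ZZ/\kappa\ZZ$ and splitting'' step (direct-summing is meant, in any case) dropped in favor of the same order-and-odd-part bookkeeping.
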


\begin{proof}
We note that the only elements in the kernel of the map come from situations where elements in $\CP$ can be decomposed as elements of the $\CP_i$ in more than one way.  In particular, $\Phi$ will be injective if and only if the sets $\CP_1, \CP_2, \CP_3$ are all strongly disjoint in the sense that for any permutation of the three sets we have that $\CP_i \cap (\CP_j+\CP_k) = \{0\}$.

More explicitly, we know that the kernel of this map can be computed as having a piece coming from $\CP_1 \cap \CP_2$ and another piece coming from $(\CP_1 + \CP_2) \cap \CP_3$.  Computing these sets was the content of the previous two lemmata, and the Theorem is an immediate consequence.
\end{proof}

\begin{example}
In the case of our running example of the bipartite graph $K_{4,4}$, we note that it is easy to compute the various quotient graphs, as depicted in Figure \ref{F:Kquot}.  In particular,  $K_{4,4}/D_4$ is a tree and  it is straightforward to compute the critical groups $\Jac(K_{4,4}/\langle \sigma_1 \rangle) \cong \Jac(K_{4,4}/\langle \sigma_2 \rangle) \cong \ZZ/4\ZZ \oplus \ZZ/8\ZZ$ and $\Jac(K_{4,4}/\langle \tau \rangle) \cong \ZZ/2\ZZ$.
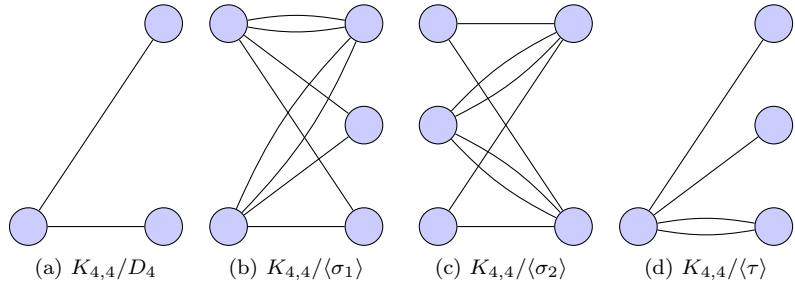
\begin{figure}[!htbp]
\centering
\subfloat[$K_{4,4}/D_4$]{\begin{tikzpicture}
  [scale=.9,auto=left,every node/.style={state,minimum size=.5cm,fill=blue!20}]
  \node(z) at (-1,1) {};
  \node(w) at (1,1){};
  \node(xy) at (1,4) {};
  \foreach \from/\to in {w/z, z/xy}
    \draw (\from) -- (\to);
\end{tikzpicture}} \quad
\subfloat[$K_{4,4}/\langle \sigma_1 \rangle$]{\begin{tikzpicture}
  [scale=.9,auto=left,every node/.style={state,minimum size=.5cm,fill=blue!20}]
  \node(z14) at (-1,1) {};
  \node(z23) at (-1,4){};
   \node(w1) at (1,1){};
  \node(w2) at (1,2.5){};
  \node(xy) at (1,4) {};

  \foreach \from/\to in {z14/w1,z14/w2,z23/w1,z23/w2}
    \draw (\from) -- (\to);
   \foreach \from/\to in {z23/xy,xy/z23,z14/xy,xy/z14}
    \draw (\from) to[bend right=10] (\to);

\end{tikzpicture}}
 \quad
\subfloat[$K_{4,4}/\langle \sigma_2 \rangle$]{
\begin{tikzpicture}
  [scale=.9,auto=left,every node/.style={state,minimum size=.5cm,fill=blue!20}]
  \node(z1) at (-1,1) {};
  \node(z24) at (-1,2.5){};
  \node(z3) at (-1,4){};
  \node(w12) at (1,1){};
  \node(xy) at (1,4) {};
  \foreach \from/\to in {z1/w12,z3/w12,z1/xy,z3/xy}
    \draw (\from) -- (\to);
  \foreach \from/\to in {z24/w12,w12/z24,z24/xy,xy/z24}
    \draw (\from) to[bend right=10] (\to);
\end{tikzpicture}} \quad
\subfloat[$K_{4,4}/\langle \tau \rangle$]{\begin{tikzpicture}
  [scale=.9,auto=left,every node/.style={state,minimum size=.5cm,fill=blue!20}]
  \node(z) at (-1,1) {};
  \node(w12) at (1,1){};
  \node(x) at (1,2.5){};
  \node(y) at (1,4) {};
  \foreach \from/\to in {z/x,z/y}
    \draw (\from) -- (\to);
  \foreach \from/\to in {z/w12,w12/z}
    \draw (\from) to[bend right=10] (\to);
\end{tikzpicture}}

\caption{Quotient Graphs from the $D_4$ action on $K_{4,4}$}
\label{F:Kquot}
\end{figure}

We further recall that we have one orbit of each type (so in particular $\tilde{t}=0$) and that the indices of the three orbits are $1,2.$ and $4$.   Thus, this theorem tells us that $\CK \oplus \ZZ/4\ZZ \cong (\ZZ/2\ZZ) \oplus (\ZZ/2\ZZ) \oplus (\ZZ/4\ZZ)$.  In particular, this implies that there is a short exact sequence given by:
\[ 1 \rightarrow (\ZZ/2\ZZ)^2 \rightarrow \Jac(K_{4,4}/\langle \sigma_1 \rangle) \oplus \Jac(K_{4,4}/\langle \sigma_2 \rangle) \oplus \Jac(K_{4,4}/\langle \tau \rangle) \rightarrow \CP/\CL' \rightarrow 1\]
This allows us to conclude that $\CP/\CL'$ will be a subgroup of $(\ZZ/2\ZZ) \oplus (\ZZ/4\ZZ)^2 \oplus (\ZZ/8\ZZ)^2$ whose quotient is $(\ZZ/2\ZZ)^2$.
\end{example}

\section{Firing Vertices}\label{S:LL'}

Recall that the set $\CL$ consists of divisors on $G$ that correspond to firing at combinations of vertices in $G$, while the set $\CL'$ is generated by pulling back divisors on $G/\langle\sigma_1\rangle,G/\langle\sigma_2\rangle,$ and $G/\langle\tau\rangle$ that correspond to firing vertices on those graphs.  The goal of this section is to understand the relationship between $\CL$ and $\CL'$.

\begin{example}
If we represent divisors on  $K_{4,4}$ by column vectors corresponding to the transpose of $[\delta(w_1), \delta(w_2), \delta(w_3), \delta(w_4), \delta(z_1),\delta(z_2,\delta(x), \delta(y)]$ then one can easily determine that the set $\CL$ consists of all integer linear combinations of the divisors:
\[ \left\{ \left[
             \begin{array}{c}
               -4 \\
               0 \\
               0 \\
               0 \\
               1 \\
               1 \\
               1 \\
               1 \\
             \end{array}
           \right],\left[
             \begin{array}{c}
              0 \\
               -4 \\
               0 \\
               0 \\
               1 \\
               1 \\
               1 \\
               1 \\
             \end{array}
           \right],\left[
             \begin{array}{c}
               0 \\
               0 \\
               -4 \\
               0 \\
               1 \\
               1 \\
               1 \\
               1 \\
             \end{array}
           \right],\left[
             \begin{array}{c}
               0 \\
               0 \\
               0 \\
               -4 \\
               1 \\
               1 \\
               1 \\
               1 \\
             \end{array}
           \right], \left[
             \begin{array}{c}
               1 \\
               1 \\
               1 \\
               1 \\
               -4 \\
               0\\
               0 \\
               0 \\
             \end{array}
           \right],\left[
             \begin{array}{c}
               1 \\
               1 \\
               1 \\
               1 \\
               0 \\
               -4\\
               0 \\
               0 \\
             \end{array}
           \right]\left[
             \begin{array}{c}
               1 \\
               1 \\
               1 \\
               1 \\
               0 \\
               0\\
               -4 \\
               0 \\
             \end{array}
           \right]\left[
             \begin{array}{c}
               1 \\
               1 \\
               1 \\
               1 \\
               0 \\
               0\\
               0 \\
               -4 \\
             \end{array}
           \right]\right\}\]

On the other hand, the set $\CL'$ will be defined by looking at the pullbacks of the divisors corresponding to `firing' vertices on the quotient graphs as depicted in Figure \ref{F:Kquot}.  Explicitly, the pullbacks of the divisors corresponding to `firing' the vertices on $K_{4,4}/\langle \sigma_1 \rangle$ are given by :

\[  \left\{ \left[
             \begin{array}{c}
               -4 \\
               0 \\
               0 \\
               -4 \\
               2 \\
               2 \\
               2 \\
               2 \\
             \end{array}
           \right],\left[
             \begin{array}{c}
              0 \\
               -4 \\
               -4 \\
               0 \\
               2 \\
               2 \\
               2 \\
               2 \\
             \end{array}
           \right],\left[
             \begin{array}{c}
               2 \\
               2 \\
               2 \\
               2 \\
               0 \\
               0 \\
               -4 \\
               -4\\
             \end{array}
           \right],\left[
             \begin{array}{c}
               1 \\
               1 \\
               1 \\
               1 \\
               -4 \\
               0 \\
               0 \\
               0 \\
             \end{array}
           \right], \left[
             \begin{array}{c}
               1 \\
               1 \\
               1 \\
               1 \\
               0 \\
               -4\\
               0 \\
               0 \\
             \end{array}
           \right]\right\}\]
The set $\CL'$ will be generated by integer combinations of those vectors along with the set of vectors that are pullbacks of `firing' on the other two quotients:
\[ \left\{ \left[
             \begin{array}{c}
               -4 \\
               0 \\
               0 \\
               0 \\
               1 \\
               1 \\
               1 \\
               1 \\
             \end{array}
           \right],\left[
             \begin{array}{c}
              0 \\
               -4 \\
               0 \\
               -4 \\
               2 \\
               2 \\
               2 \\
               2 \\
             \end{array}
           \right],\left[
             \begin{array}{c}
               0 \\
               0 \\
               0 \\
               -4 \\
               1 \\
               1 \\
               1 \\
               1 \\
             \end{array}
           \right], \left[
             \begin{array}{c}
               2 \\
               2 \\
               2 \\
               2 \\
               -4 \\
               -4\\
               0 \\
               0 \\
             \end{array}
           \right],\left[
             \begin{array}{c}
               2 \\
               2 \\
               2 \\
               2 \\
               0 \\
               0\\
               -4 \\
               -4\\
             \end{array}
           \right],\left[
             \begin{array}{c}
               1 \\
               1 \\
               1 \\
               1 \\
               0 \\
               0\\
               -4 \\
               0 \\
             \end{array}
           \right],\left[
             \begin{array}{c}
               1 \\
               1 \\
               1 \\
               1 \\
               0 \\
               0\\
               0 \\
               -4 \\
             \end{array}
           \right], \left[
             \begin{array}{c}
               -4 \\
               -4 \\
               -4 \\
               -4 \\
               4 \\
               4\\
               4 \\
               4 \\
             \end{array}
           \right]\right\}\]
It is an elementary, if tedious, exercise to see that in this case $\CL = \CL'$.
\end{example}

\begin{lemma}\label{L:TypeIL}
If $v$ is a vertex in a Type I or Type II orbit then $\ell_v \in \CL'$.
\end{lemma}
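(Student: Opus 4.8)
The plan is to pin down a single firing divisor from the orbit inside $\CL'$ as an ``anchor'' and then spread membership across the whole orbit using the transitivity of the $D_n$-action. The technical engine is a pullback computation: for the quotient map $\phi_i \colon G \to H_i$ and any vertex $\bar w$ of $H_i$, the pullback of the firing divisor $\ell_{\bar w}$ on $H_i$ should satisfy
\[\phi_i^*(\ell_{\bar w}) = \sum_{u \in \phi_i^{-1}(\bar w)} \ell_u .\]
I would verify this directly from the definition of $\phi_i^*$ together with the horizontal multiplicity $m_{\phi_i}$: the value at a vertex $u$ over $\bar w$ is $m_{\phi_i}(\bar w)\,\ell_{\bar w}(\bar w) = -\,m_{\phi_i}(\bar w)\deg_{H_i}(\bar w) = -\deg_G(u)$ by the harmonic degree relation, while the value at a vertex $u'$ over some $\bar z \neq \bar w$ is $m_{\phi_i}(\bar z)$ times the number of edges $\bar w\,\bar z$ in $H_i$, which is exactly the number of edges of $G$ joining $u'$ to the fiber over $\bar w$. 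Since $\phi_i^*(\ell_{\bar w})$ is by definition a generator of $\CL'$, this identity places all such fiber sums inside $\CL'$.

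Recalling that a fiber $\phi_i^{-1}(\bar v)$ is precisely the $\langle \sigma_i\rangle$-orbit $\{v,\sigma_i(v)\}$ of $v$, I extract two consequences. First, if $\sigma_i(v)=v$ the fiber is a singleton, so $\ell_v = \phi_i^*(\ell_{\bar v}) \in \CL'$. Second, if $\sigma_i(v)\neq v$ then $\ell_v + \ell_{\sigma_i(v)} = \phi_i^*(\ell_{\bar v}) \in \CL'$, whence $\ell_v \in \CL'$ forces $\ell_{\sigma_i(v)} \in \CL'$ and conversely. Putting these together for $i=1,2$: membership of a firing divisor in $\CL'$ propagates from $v$ to both $\sigma_1(v)$ and $\sigma_2(v)$.

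To begin the propagation I use exactly the defining property of an inertial orbit. A Type~I orbit contains a vertex $v_0$ fixed by $\sigma_2$, so the singleton-fiber case applied to $\phi_2$ yields the anchor $\ell_{v_0}\in\CL'$; a Type~II orbit contains a vertex fixed by $\sigma_1$, giving the anchor through $\phi_1$. Since $\langle\sigma_1,\sigma_2\rangle = D_n$ acts transitively on the orbit $\O$ (indeed $\langle\tau\rangle$ already does), every vertex of $\O$ is the image of $v_0$ under a word in $\sigma_1$ and $\sigma_2$, and applying the propagation step along such a word gives $\ell_v\in\CL'$ for all $v\in\O$. I expect the delicate point to be the pullback identity itself, where the horizontal multiplicities must be tracked carefully and one should observe that the fibers $\{v,\sigma_i(v)\}$ carry no internal edges (so the fiber sum contributes no diagonal cross terms); by contrast, once that identity is available the argument is short, and it is worth emphasizing that it relies only on $\phi_1$ and $\phi_2$, with the anchor supplied precisely by the inertial hypothesis.
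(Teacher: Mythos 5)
Your architecture is close to the paper's: your anchor step (singleton fiber at a vertex fixed by $\sigma_i$, giving $\phi_i^*(\ell_{\phi_i(v_0)})=\ell_{v_0}\in\CL'$) is exactly the computation the paper performs, and where the paper then finishes by asserting that $\CL'$ is symmetric under the $D_n$-action, you instead propagate along the orbit using the two-element-fiber identity $\phi_i^*(\ell_{\bar v})=\ell_v+\ell_{\sigma_i(v)}$ and cancellation. That variant of the propagation is legitimate and arguably more self-contained, since the $D_n$-invariance of $\CL'$ is not immediate from its definition.

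However, there is a genuine error at the point you yourself call delicate: the verification of the pullback identity for two-element fibers. You claim that the fibers $\{v,\sigma_i(v)\}$ carry no internal edges, and correspondingly that $m_{\phi_i}(\bar w)\deg_{H_i}(\bar w)=\deg_G(v)$. Both claims are false in general. The paper's definition of a harmonic action explicitly \emph{permits} $\sigma_i$ to fix an edge provided it swaps the endpoints, and such an edge is precisely an edge joining $v$ to $\sigma_i(v)$, i.e., an internal edge of the fiber; this occurs in the paper's own Figure \ref{F:orbits}(a) (the edge $x_1y_3$, where $\sigma_1(x_1)=y_3$), and it occurs inside inertial orbits as well: take $G$ to be a triangle $\{z_1,z_2,z_3\}$ with its full (harmonic) $D_3$-symmetry, where the $\phi_1$-fiber $\{z_1,z_3\}$ contains the edge $z_1z_3$. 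In that example the edge $z_1z_3$ becomes a loop in $H_1$ and contributes nothing to firing, so $m_{\phi_1}\cdot\deg_{H_1}(\overline{z_1})=1\neq 2=\deg_G(z_1)$; the value of $\phi_1^*(\ell_{\overline{z_1}})$ at $z_1$ is $-1$, not $-\deg_G(z_1)=-2$. The identity $\phi_i^*(\ell_{\bar v})=\ell_v+\ell_{\sigma_i(v)}$ is nevertheless true, because the loop-degree missing on the left is exactly $e_{v,\sigma_i(v)}$, which is exactly the cross term $\ell_{\sigma_i(v)}(v)$ you dropped on the right -- the two omissions cancel. So your proof is repairable, but as written the computation justifying your main engine is wrong in precisely the cases where harmonicity allows swapped edges, and the loop/cross-term bookkeeping must be added. (Your anchor step is immune to this: a $\sigma_i$-fixed vertex has a singleton fiber, which cannot contain an edge, which is why the paper's computation for fixed vertices goes through verbatim.)
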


\begin{proof}
Let $v$ be a vertex that is fixed by $\sigma_1$, and let $\phi_1: G \rightarrow H_1=G /\langle \sigma_1 \rangle$ be the natural quotient map.  With a possible abuse of notation, let us define $f_{x,y}$ to be the number of edges of $H_1$ between two vertices $x,y \in V(H_1)$ and let $\ell_{\phi_1(v)}$  be the divisor that corresponds to firing the vertex $\phi_1(v)$ on $H_1$.  We can compute:
\begin{eqnarray*}
\phi_1^*(\ell_{\phi_1(v)})&=&\sum_{y \in V(G_1)} \sum_{\substack{x \in V(G) \\ \phi(x)=y}} m_{\phi_1}(x) (\ell_{\phi_1(v)}(y) ) (x)\\
&=& \sum_{\substack{x \in V(G) \\ \sigma_1(x)=x}} 2 \ell_{\phi_1(v)} x + \sum_{\substack{\{x,y\} \subset V(G) \\ \sigma_1(x)=y}} \ell_{\phi_1(v)}(\phi_1(x))(x+y) \\
&=& -2 deg_{H_1}(\phi_1(v) )+ \sum_{\substack{x \in V(G)  \\ \sigma_1(x)=x \\ x \ne v}} 2 f_{\phi_1(v),\phi_1(x)} x + \sum_{\substack{\{x,y\} \subset V(G) \\ \sigma_1(x)=y}} f_{\phi_1(v),\phi_1(x)}(x+y) \\
&=&-deg_G(v) v + \sum_{\substack{x \in V(G) \\ x \ne v}} e_{v,x} x \\
&=&\ell_v
\end{eqnarray*}

This shows that $\ell_v \in \CL'$ for any vertex fixed by $\sigma_1$ and a similar argument will show that $\ell_v \in \CL'$ for any vertex fixed by $\sigma_2$.  However, we know that the set $\CL'$ is symmetric under the $D_n$-action, and therefore $\ell_v \in \CL'$ for all vertices in inertial orbits.
\end{proof}

\begin{theorem}\label{T:LL'}
$\CL/\CL'  \cong \bigoplus_\O \ZZ/(\frac{n}{k_\O}\ZZ)$, where the sum ranges over all non-inertial orbits.
\end{theorem}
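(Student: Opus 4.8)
The plan is to use Lemma~\ref{L:TypeIL} to discard the inertial vertices and then read off a presentation of $\CL/\CL'$ generated only by firing at vertices of the Type III orbits. Since $\ell_v \in \CL'$ for every $v$ in an inertial orbit, the quotient $\CL/\CL'$ is generated by the images of the $\ell_v$ as $v$ ranges over the non-inertial orbits. Moreover $\CL$ is itself the free abelian group on the symbols $\ell_v$ modulo the single relation $\sum_{v \in V(G)} \ell_v = 0$ (valid since $G$ is connected), so it will suffice to identify, inside this presentation, the extra relations imposed by the generators of $\CL'$.

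To find those relations I would compute the pullbacks of the firing divisors at the images of Type III vertices, exactly as in the computation of Lemma~\ref{L:TypeIL}, using that for a harmonic morphism $\phi$ the pullback of the firing divisor at a vertex $w$ equals the sum of $\ell_u$ over the fiber $\phi^{-1}(w)$. Writing a Type III orbit as $\{x_i\}\cup\{y_i\}$ with $\sigma_1(x_i)=y_{n/k+1-i}$ and $\sigma_2(x_i)=y_{n/k+2-i}$ as in Remark~\ref{R:suborbit}, the maps $\phi_1,\phi_2$ identify each $x_i$ with a single $y$, while $\phi_3$ collapses each suborbit to a point. Thus $\CL'$ contains, for each such orbit, the divisors
\begin{gather*}
\ell_{x_i}+\ell_{y_{n/k+1-i}}, \qquad \ell_{x_i}+\ell_{y_{n/k+2-i}} \quad (1\le i\le n/k),\\
\textstyle\sum_{i} \ell_{x_i}, \qquad \sum_i \ell_{y_i}.
\end{gather*}
I would also check that the pullbacks coming from images of inertial vertices add nothing new: the fiber of such a vertex lies entirely inside a single inertial orbit, so these pullbacks already lie in $\langle \ell_v : v \text{ inertial}\rangle$. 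Hence, together with the inertial relations $\ell_v\equiv 0$, the displayed divisors generate all of $\CL'$ inside the presentation of $\CL$, and the presentation is complete and exact.

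It then remains to solve the resulting presentation, which decouples over the Type III orbits. Setting the inertial generators to zero, the relations $\ell_{x_i}\equiv -\ell_{y_{n/k+1-i}}$ and $\ell_{x_i}\equiv -\ell_{y_{n/k+2-i}}$ in $\CL/\CL'$ identify consecutive $\ell_{y_j}$ within a fixed orbit, hence force all of the $\ell_{y_j}$ (and so all of the $\ell_{x_i}$) to equal a single generator; the suborbit-sum relation then reads $(n/k_\O)\cdot(\text{generator})\equiv 0$, while the global relation $\sum_v \ell_v=0$ becomes a consequence of the two suborbit-sum relations and is therefore redundant. This yields a factor $\ZZ/(n/k_\O)\ZZ$ for each non-inertial orbit with no interaction between distinct orbits, giving $\CL/\CL'\cong\bigoplus_\O \ZZ/(\tfrac{n}{k_\O}\ZZ)$ summed over the non-inertial orbits. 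The one step requiring genuine care — and the main obstacle — is the pullback computation: one must track the horizontal multiplicities $m_{\phi_i}$ and the possibility of edges internal to a fiber, in order to be certain that $\phi_i^*(\ell_{\phi_i(x)})$ really is the plain sum $\sum_{\phi_i(u)=\phi_i(x)}\ell_u$ so that the relations above are exactly right. Once that identification is secured, the remaining argument is routine integer linear algebra.
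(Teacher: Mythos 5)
Your proposal is correct and follows essentially the same route as the paper's proof: reduce to the Type III orbits via Lemma~\ref{L:TypeIL}, then use the pullbacks $\phi_1^*(\ell_{\phi_1(x_i)}) = \ell_{x_i}+\ell_{y_{n/k_\O+1-i}}$, $\phi_2^*(\ell_{\phi_2(x_i)}) = \ell_{x_i}+\ell_{y_{n/k_\O+2-i}}$, and $\phi_3^*(\ell_{\phi_3(x_1)}) = \sum_i \ell_{x_i}$ to show each non-inertial orbit contributes a cyclic group of order $\frac{n}{k_\O}$. If anything, your bookkeeping via the presentation $\CL \cong \ZZ^{V(G)}/\langle \sum_v \ell_v \rangle$ --- checking that inertial-image pullbacks add nothing new, that the global relation is redundant, and that the listed relations are \emph{all} of the relations --- is more careful than the paper's argument, which exhibits the generators $\ell_{x_1}$ and the relations $\frac{n}{k_\O}\ell_{x_1}\equiv 0$ (hence a surjection from $\bigoplus_\O \ZZ/\frac{n}{k_\O}\ZZ$ onto $\CL/\CL'$) but leaves the absence of further relations implicit.
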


\begin{proof}
The set $\CL$ is generated by the elements $\ell_v$ as $v$ ranges over all vertices $v \in V(G)$.  It follows from Lemma \ref{L:TypeIL} that for vertices in inertial orbits then $\ell_v \in \CL'$ as well, so it suffices to consider vertices in orbits of Type III.

Let $\O=\{x_i\} \cup \{y_i\}$ is an orbit of Type III and index $k_\O$.  One can compute in a manner similar to the proof of Lemma \ref{L:TypeIL} that  $\phi_1^*(\ell_{\phi_1(x_i)}) = \ell_{x_i} + \ell_{y_{n+1-i}}$, and therefore we have that  $\ell_{y_{n+1-i}} = -\ell_{x_i}  \in \CL/\CL'$ .  Moreover, by the symmetry of the $D_n$-action, one can see that for any pair $x_i, y_j$ we will have that $\ell_{y_{j}} = -\ell_{x_i}  \in \CL/\CL'$, and in particular the full contribution of the orbit $\O$ to $\CL'$  can be represented by the element $\ell_{x_1}$.   We further note that if we consider the natural quotient map $\phi_3: G \rightarrow H_3=G/ \langle \tau \rangle$ then we can compute that $\phi_3^*(\ell_{\phi_3(x_1)}) =  \sum_{i=1}^{n/k} \ell_{x_i} \in \CL' $.  In other words, the pullback of firing $\phi_3(x_1)$ corresponds to firing all of the vertices in the $x_i$-suborbit of $\O$.  In particular, this implies that $\frac{n}{k_\O} \ell_{x_1} \in \CL/\CL'$, which concludes the proof of the theorem.
\end{proof}

In particular, we note that $\CL = \CL'$ if there are no Type III orbits or if all Type III orbits are of index $k_\O = n$.  This is the case in the example of the $D_4$ action on $K_{4,4}$ that we have previously discussed.

\section{Summary of Results}\label{S:summary}

Now that we have been able to precisely determine the structure of many of the sets defined in Section \ref{S:Notation}, we wish to consider what this tells us about the Jacobian of the original graph $G$. Before we do this in general, let us revisit our running example one final time.

\begin{example}
Recall that in the case of the $D_4$ action on $K_{4,4}$ defined in Example \ref{E:K44}, we have shown the following facts:
\begin{itemize}
\item $\CD/\CP \cong \ZZ/4\ZZ \oplus \ZZ/2\ZZ$
\item $\CP/\CL'$ is a finite abelian group of order $2^{9}$
\item $\CL=\CL'$
\end{itemize}
Looking at the exact sequences in Theorem \ref{T:Exact}, we see that the fact that $\CL=\CL'$ implies that $\CD/\CP \cong \CD/ (\CP + \CL)$ and that we have a short exact sequence
\[ 1 \rightarrow \CP/\CL' \rightarrow \Jac(K_{4,4}) \rightarrow \CD/\CP \rightarrow 1\]
We can therefore conclude that $\Jac(K_{4,4})$ is a finite abelian group of order $2^{12}$. In fact, a more careful examination of these results would allow us to eliminate many of the groups of this order but these results do not allow us to specify the group exactly.  We note that it follows from a direct computation or from \cite{Lor1} that $\Jac(K_{4,4}) \cong (\ZZ/4\ZZ)^4 \oplus (\ZZ/16\ZZ)$.

\end{example}

More generally, we would like to revisit the exact sequences of Theorem \ref{T:Exact} in light of the results of the previous sections. Recall that $\tilde{t}=\min(t_1-1,0)+\min(t_2-1,0)$ appeared in the earlier results when $n$ is even; for notational simplicity, we set $\tilde{t}=0$ in the case where $n$ is odd.  Further recall that we defined $\epsilon$ to be $2$ if $n$ and $\kappa$ are both even, and $1$ otherwise. Then the results of the previous sections imply that we have the following exact sequences:

\begin{small}
\[ 1 \rightarrow (\CP \cap \CL)/\CL' \rightarrow \bigoplus_{\O \text{ Type III}} \ZZ/(\frac{n}{k_\O}\ZZ) \rightarrow (\ZZ/2\ZZ)^{\tilde{t}}\oplus( \ZZ/n\ZZ)^{t_3} \oplus \ZZ/(\frac{\epsilon n}{\kappa}\ZZ) \rightarrow \CD/(\CP + \CL) \rightarrow 1\]
\[ 1 \rightarrow (\CP \cap \CL)/\CL' \rightarrow \CP/\CL' \rightarrow \Jac(G) \rightarrow \CD/(\CP + \CL) \rightarrow 1\]
\[ 1 \rightarrow \ZZ/\kappa\ZZ \rightarrow \Jac(\hat{G})^2\oplus (\ZZ/2\ZZ)^{\tilde{t}+\epsilon-1}\oplus \left(\bigoplus_{\O}( \ZZ/k_\O\ZZ)\right) \rightarrow \bigoplus_{i=1,2,3} \Jac(H_i) \rightarrow \CP/\CL' \rightarrow 1\]
\end{small}

In order to help understand what this says about the relationship between $\Jac(G),  \Jac(\hat{G})$ and $\Jac(H_1) \oplus \Jac(H_2) \oplus \Jac(H_3)$ we first make the following definition:

\begin{definition}
Let $A$ (resp. $B$) be a finite abelian group, and for each prime $p$ let $A_p$ (resp. $B_p$) be its $p$-Sylow subgroup.  Then for any positive integer $m$ we say that $A$ and $B$ are $m$-equivalent if $|A_p|=|B_p|$ for all $p$ and moreover $A_p \cong B_p$ for all $p \nmid m$.
\end{definition}

We are now able to prove our main theorem, which we state in the following form:

\begin{theorem}\label{T:Equiv}
Let $G$ be a graph which admits a harmonic $D_n$-action.  Then  the group $\Jac(H_1) \oplus \Jac(H_2) \oplus \Jac(H_3)\oplus \ZZ/n\ZZ$ is $2n$-equivalent to the group  $\displaystyle \Jac(G) \oplus \Jac(\hat{G})^2\oplus \left(\bigoplus_{\O \text{ inertial}}( \ZZ/k_\O\ZZ)\right)$.
\end{theorem}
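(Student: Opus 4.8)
The plan is to extract the theorem from the three exact sequences displayed immediately above, which already encode Theorem~\ref{T:Exact} together with the explicit descriptions of $\CD/\CP$, $\CK$, and $\CL/\CL'$ coming from Theorems~\ref{T:DP}, \ref{T:K}, and \ref{T:LL'}. I would split the verification of $2n$-equivalence into its two halves: first that the two groups have the same order at every prime, and then that their $p$-Sylow subgroups are isomorphic for every $p\nmid 2n$.

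For the order statement, recall that in any four-term exact sequence $1\to A\to B\to C\to D\to 1$ of finite abelian groups one has $|A|\,|C|=|B|\,|D|$, and that this persists after passing to $p$-primary parts. Applying this to the three sequences and writing $J_H=\Jac(H_1)\oplus\Jac(H_2)\oplus\Jac(H_3)$, I would solve the second and third sequences for $|\CP/\CL'|$ and the first for $|\CD/(\CP+\CL)|$; the groups $(\CP\cap\CL)/\CL'$, $\CP/\CL'$, and $\CD/(\CP+\CL)$ then cancel, leaving a single identity among the orders of $\Jac(G)$, $\Jac(\hat{G})$, $J_H$, and the various cyclic correction factors. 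Simplifying it with the elementary identities $\prod_{\O}k_\O\cdot\prod_{\O\text{ Type III}}(n/k_\O)=n^{t_3}\prod_{\O\text{ inertial}}k_\O$ and $2^{\epsilon-1}=\epsilon$ collapses everything to $|J_H|\cdot n=|\Jac(G)|\,|\Jac(\hat{G})|^2\prod_{\O\text{ inertial}}k_\O$, which is exactly the assertion that the two groups in the theorem have equal order. Since taking $p$-parts is exact, running the identical bookkeeping prime-by-prime gives $|A_p|=|B_p|$ for all $p$.

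For the isomorphism away from $2n$, the key observation is that $\kappa\mid n$ and every index $k_\O$ divides $n$, so all of the cyclic factors appearing in the three sequences ($\ZZ/\kappa\ZZ$, the $\ZZ/k_\O\ZZ$, $\ZZ/n\ZZ$, $\ZZ/(\epsilon n/\kappa)\ZZ$, $\ZZ/(n/k_\O)\ZZ$, and the copies of $\ZZ/2\ZZ$) have order dividing $2n$ and therefore vanish $p$-locally. Localizing at a prime $p\nmid 2n$, the first sequence collapses to force $((\CP\cap\CL)/\CL')_p=0$ and $(\CD/(\CP+\CL))_p=0$; the second then yields $(\CP/\CL')_p\cong\Jac(G)_p$; and the third becomes a short exact sequence $0\to\Jac(\hat{G})_p^2\to (J_H)_p\to\Jac(G)_p\to 0$. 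On the other side of the claimed equivalence every cyclic summand likewise vanishes, leaving $\Jac(G)_p\oplus\Jac(\hat{G})_p^2$, so at the prime $p$ the theorem reduces to showing this short exact sequence splits.

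Exhibiting the splitting is the main obstacle. I would obtain it from the harmonic quotient morphisms $\psi_i\colon H_i\to\hat{G}$ induced by the inclusions $\langle\sigma_i\rangle\subset D_n$ and $\langle\tau\rangle\subset D_n$, whose degrees are $n$, $n$, and $2$ respectively. Tracking the two intersection lemmas shows that the inclusion $\Jac(\hat{G})_p^2\hookrightarrow (J_H)_p$ is, modulo those isomorphisms, assembled from the pullbacks $\psi_i^{*}$; since the corresponding pushforwards satisfy $\psi_{i,*}\circ\psi_i^{*}=[\deg\psi_i]$ and each $\deg\psi_i$ divides $2n$ and is hence a unit in $\ZZ_{(p)}$, a left inverse can be built coordinatewise, so $\Jac(\hat{G})_p^2$ is a direct summand and the sequence splits. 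This is precisely the point at which the hypothesis $p\nmid 2n$ is used: when $p\mid 2n$ the degrees need no longer be units, the splitting can fail, and one recovers only the equality of orders, consistent with the caveat in Theorem~\ref{T:Main}.
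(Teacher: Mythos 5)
Your proposal is correct, and at the decisive step it takes a genuinely different route from the paper. The paper never localizes at a prime: it derives three $2n$-equivalences directly from the displayed exact sequences (invoking the blanket principle that alternating direct sums across an exact sequence of finite abelian groups are $m$-equivalent when the torsion involved is supported on primes dividing $m$), and then proves the theorem by purely formal manipulation --- adding $\ZZ/\frac{n}{\kappa}\ZZ$ and $\CD/(\CP+\CL)$ to both sides of the third equivalence, substituting the second into it, simplifying with the first, and cancelling common summands. You instead do the order bookkeeping explicitly (your identities $\prod_\O k_\O\cdot\prod_{\O \text{ Type III}}(n/k_\O)=n^{t_3}\prod_{\O\text{ inertial}}k_\O$ and $2^{\epsilon-1}=\epsilon$ do collapse the three order relations to $n\,|J_H|=|\Jac(G)|\,|\Jac(\hat G)|^2\prod_{\O\text{ inertial}}k_\O$, where $J_H=\Jac(H_1)\oplus\Jac(H_2)\oplus\Jac(H_3)$), and then, for $p\nmid 2n$, you localize and reduce to splitting the short exact sequence $0\to\Jac(\hat G)_p^2\to (J_H)_p\to\Jac(G)_p\to 0$. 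This is exactly the point the paper's formal argument glosses over: its stated principle literally applies only when \emph{every} group in the sequence is supported on primes dividing $2n$, which fails for the second and third sequences, and for the third sequence the claimed $p$-local isomorphism $(J_H)_p\cong\Jac(\hat G)_p^2\oplus(\CP/\CL')_p$ is a genuine splitting assertion rather than a consequence of exactness (compare $0\to\ZZ/p\ZZ\to\ZZ/p^2\ZZ\to\ZZ/p\ZZ\to 0$). Your retraction supplies that splitting, and it can be made fully concrete: the map $(\alpha,\beta)\mapsto\bigl(\psi_1^*\alpha,\;\psi_2^*(\beta-\alpha),\;-\psi_3^*\beta\bigr)$ lands in $\CK=\ker\Phi$ because $\phi_i^*\psi_i^*=\pi^*$ for the full quotient $\pi:G\to\hat G$, and composing with $(\delta_1,\delta_2,\delta_3)\mapsto(\psi_{1*}\delta_1,\psi_{3*}\delta_3)$ gives $(\alpha,\beta)\mapsto(n\alpha,-2\beta)$, which is invertible on $p$-primary parts; together with the equality $|\CK_p|=|\Jac(\hat G)_p|^2$ from Theorem \ref{T:K}, this shows $\CK_p$ is a direct summand of $(J_H)_p$ --- note that you only need the \emph{order} of $\CK_p$ here, so the vaguer step of ``tracking the two intersection lemmas'' can be avoided. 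Two details should still be pinned down: that the induced maps $\psi_i:H_i\to\hat G$ are harmonic (so that $\psi_i^*$ and $\psi_{i*}$ act on Jacobians and $\psi_{i*}\psi_i^*=\deg\psi_i$), which follows since $\pi=\psi_i\circ\phi_i$ with $\pi$ and $\phi_i$ surjective harmonic, and the degrees $n,n,2$. In sum, the paper's route is shorter but stays at the level of group-order formalism; yours isolates exactly where the hypothesis $p\nmid 2n$ does real work, mirrors the Kani--Rosen idempotent technique (degrees become units away from $|D_n|$), and in doing so makes rigorous a step that the paper's write-up leaves implicit.
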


\begin{proof}
We note that the definition above implies that given an exact sequence
\[1 \rightarrow A_1 \rightarrow A_2 \rightarrow \ldots \rightarrow A_k \rightarrow 1\]
\noindent then we have that $\bigoplus_{i \text{ odd}}A_i$ is $m$-equivalent to $\bigoplus_{i \text{ even}}A_i$ as long as every prime dividing $|A_i|$ is also a divisor of $m$.  Therefore, if we use $\sim$ to denote $2n$-equivalence then the above exact sequences imply the following equivalences:

\[(\CP \cap \CL)/\CL' \oplus (\ZZ/2\ZZ)^{\tilde{t}}\oplus( \ZZ/n\ZZ)^{t_3} \oplus \ZZ/(\frac{\epsilon n}{\kappa}\ZZ) \sim \left( \bigoplus_{\O \text{ Type III}} \ZZ/(\frac{n}{k_\O}\ZZ)\right) \oplus \CD/(\CP + \CL)\]
\[(\CP \cap \CL)/\CL' \oplus \Jac(G) \sim \CP/\CL' \oplus \CD/(\CP + \CL)\]
\[\ZZ/\kappa\ZZ\oplus \left( \bigoplus_{i=1,2,3} \Jac(H_i)\right) \sim \Jac(\hat{G})^2\oplus (\ZZ/2\ZZ)^{\tilde{t}+\epsilon-1}\oplus \left(\bigoplus_{\O}( \ZZ/k_\O\ZZ)\right) \oplus \CP/\CL'\]

If we start with the third equation and add copies of $\ZZ/\frac{n}{\kappa}\ZZ$ and $\CD/(\CP + \CL)$ to both sides and then plug in the second equation to the third, we obtain that $\CD/(\CP + \CL) \oplus \ZZ/n\ZZ \oplus \left(\bigoplus_{i=1,2,3} \Jac(H_i)\right)$ is $2n$-equivalent to \[\ZZ/\frac{n}{\kappa}\ZZ \oplus \Jac(\hat{G})^2\oplus (\ZZ/2\ZZ)^{\tilde{t}+\epsilon-1}\oplus \left(\bigoplus_{\O}( \ZZ/k_\O\ZZ)\right) \oplus (\CP \cap \CL)/\CL' \oplus \Jac(G).\]  Using the first equation, this second term is equivalent to \[ \Jac(G) \oplus \Jac(\hat{G})^2\oplus \left(\bigoplus_{\O \text{ Type I or II}}( \ZZ/k_\O\ZZ)\right) \oplus \CD/(\CP + \CL)\]  \noindent from which the theorem follows. \end{proof}

\section{Examples and Applications}

\subsection{Klein Four Actions}

Let us begin by considering what Theorem \ref{T:Equiv} tells us about graphs admitting a Klein-Four action.  In particular, we note that the only orbits which contribute to the results are the orbits consisting of a single point fixed by the entire group action; all other orbits either are Type III or have $k=1$.

\begin{corollary} \label{C:Klein}
  Let $G$ be a graph which admits a $K$-action so that there are $\o\ge 1$ points that are fixed by the entire group $K$. Then for any prime $p \ne 2$, the $p$-part of $\Jac(G)\oplus \Jac(\hat{G})^2$ is isomorphic to the $p$-part of the direct sum $\Jac(H_1) \oplus \Jac(H_2) \oplus \Jac(H_3)$.  Moreover, if the $2$-part of $\Jac(H_1) \oplus \Jac(H_2) \oplus \Jac(H_3)$ is of order $2^n$ and the $2$-part of $\Jac(\hat{G})$ is of order $2^m$ then the $2$-part of $\Jac(G)$ has order $2^{n-\o-2m+1}$.
\end{corollary}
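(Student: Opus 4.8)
The plan is to deduce this directly from Theorem~\ref{T:Equiv} by specializing to $n=2$, since a Klein-four action is precisely a harmonic $D_2$-action with $\sigma_3 = \tau = \sigma_1\sigma_2$. With $n=2$ we have $2n=4$, whose only prime divisor is $2$; consequently $2n$-equivalence amounts to saying that the $2$-Sylow subgroups have equal order while the $p$-Sylow subgroups are genuinely isomorphic for every odd prime $p$. So once I translate the statement of Theorem~\ref{T:Equiv} into the $n=2$ setting, the two halves of the corollary should fall out by reading off odd-primary parts and $2$-primary orders respectively.

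The one genuine computation I would carry out is the identification of the inertial contribution $\bigoplus_{\O \text{ inertial}} \ZZ/k_\O\ZZ$. Each inertial orbit has size dividing $n=2$, so its index is $k_\O = n/|\O| \in \{1,2\}$. An inertial orbit of size $2$ has $k_\O = 1$ and hence contributes a trivial summand, while an orbit of size $1$ is a point fixed by all of $K$ (necessarily Type~I, as it is fixed by $\sigma_2$) and has $k_\O = 2$. By hypothesis there are exactly $\o$ such fixed points, so $\bigoplus_{\O \text{ inertial}} \ZZ/k_\O\ZZ \cong (\ZZ/2\ZZ)^{\o}$. Since also $\ZZ/n\ZZ = \ZZ/2\ZZ$, Theorem~\ref{T:Equiv} specializes to the assertion that
\[ \Jac(H_1)\oplus\Jac(H_2)\oplus\Jac(H_3)\oplus \ZZ/2\ZZ \]
is $2n$-equivalent (with $2n=4$) to
\[ \Jac(G)\oplus\Jac(\hat{G})^2\oplus(\ZZ/2\ZZ)^{\o}. \]

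From here the two conclusions are immediate. For an odd prime $p$, every $\ZZ/2\ZZ$ factor above has trivial $p$-part, so the isomorphism of $p$-Sylow subgroups (valid since $p \nmid 4$) yields that the $p$-part of $\Jac(H_1)\oplus\Jac(H_2)\oplus\Jac(H_3)$ is isomorphic to the $p$-part of $\Jac(G)\oplus\Jac(\hat{G})^2$, which is the first claim. For the prime $2$, the $4$-equivalence guarantees equality of the orders of the $2$-Sylow subgroups; writing the $2$-parts of $\Jac(H_1)\oplus\Jac(H_2)\oplus\Jac(H_3)$, of $\Jac(\hat{G})$, and of $\Jac(G)$ as $2^n$, $2^m$, and $2^a$ respectively, this equality reads $n + 1 = a + 2m + \o$, whence $a = n - \o - 2m + 1$, as desired.

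I expect no serious obstacle here: essentially all of the content has already been packaged into Theorem~\ref{T:Equiv}, and the only place where care is needed is the orbit bookkeeping, namely making sure that size-$2$ inertial orbits drop out and that the $\o$ global fixed points are exactly the orbits contributing a $\ZZ/2\ZZ$ summand. The arithmetic with Sylow orders is then routine.
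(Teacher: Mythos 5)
Your proposal is correct and follows exactly the paper's route: the paper likewise derives Corollary \ref{C:Klein} by specializing Theorem \ref{T:Equiv} to $n=2$, observing that the only orbits contributing to the inertial sum are the $\o$ points fixed by all of $K$ (each giving a $\ZZ/2\ZZ$, since all other inertial orbits have index $1$), and then reading off odd-primary isomorphism and $2$-primary order equality from the definition of $2n$-equivalence. Your orbit bookkeeping and the Sylow-order arithmetic match the paper's argument.
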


As an example, let $G$ be the graph corresponding to the vertices and edges of a regular octahedron, as pictured in Figure \ref{F:Octo}.  In particular, let $\{a,b,c,d\}$ be four vertices connected in a cycle, and let $x$ and $y$ be two additional vertices which are each connected to all of $\{a,b,c,d\}$ but not to each other.  We can define a Klein-Four action on this graph by letting $\sigma_1$ permute the vertices by $(a,b)(c,d)$ and $\sigma_2$ permute them as $(a,d)(b,c)$.  In particular, both $x$ and $y$ are fixed by the entire group action.

\begin{figure}[!htbp]
\centering
\subfloat[$W_8$]{\begin{tikzpicture}
  [scale=.8,auto=left,every node/.style={state,minimum size=.5cm,fill=blue!20}]
  \node (x) at (0,2) {x};
  \node (a) at (-1,1) {a};
  \node (b) at (1,1)  {b};
  \node (c) at (1,-1)  {c};
  \node (d) at (-1,-1) {d};
  \node (y) at (0,-2) {y};

\foreach \from/\to in {a/b,b/c,c/d,d/a,x/a,x/b,x/c,x/d,y/a,y/b,y/c,y/d}
    \draw (\from) -- (\to);

\end{tikzpicture}} \quad
\subfloat[$H_1 \cong H_2$]{
\begin{tikzpicture}
 [scale=.7,auto=left,every node/.style={state,minimum size=.5cm,fill=blue!20}]
  \node (0) at (0,2) {};
  \node (1) at (1,0) {};
  \node (2) at (-1,0)  {};
  \node (3) at (0,-2) {};
 \foreach \from/\to in {0/1,0/2,1/3,2/3,1/2}
    \draw (\from) -- (\to);
\end{tikzpicture}} \quad
\subfloat[$H_3$]{ \begin{tikzpicture}
  [scale=.7,auto=left,every node/.style={state,minimum size=.5cm,fill=blue!20}]
  \node (0) at (0,2) {};
  \node (1) at (1,0) {};
  \node (2) at (-1,0)  {};
  \node (3) at (0,-2) {};
 \foreach \from/\to in {0/1,0/2,1/3,2/3}
    \draw (\from) -- (\to);
    \draw (1) to[bend right=45] (2);
      \draw (1) to[bend left=45] (2);
\end{tikzpicture}}
\quad
\subfloat[$\hat{G}$]{\makebox[.1\textwidth]{\begin{tikzpicture}
  [scale=.7,auto=left,every node/.style={state,minimum size=.5cm,fill=blue!20}]
  \node (0) at (0,2) {};
  \node (1) at (0,0) {};
  \node (3) at (0,-2) {};
 \foreach \from/\to in {0/1,1/3}
    \draw (\from) -- (\to);
\end{tikzpicture}}}
\caption{The octahedron graph and its various quotients}
\label{F:Octo}
\end{figure}
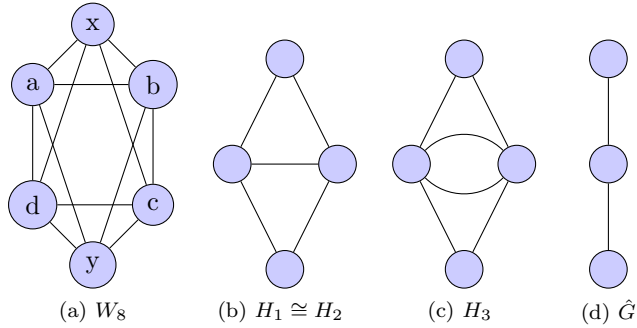

The quotients $H_1$ and $H_2$ are each isomorphic to two triangles attached at an edge, so we can easily compute that $\Jac(H_1) \cong \Jac(H_2) \cong \ZZ/8\ZZ$.  At the same time, the quotient by the involuton $\sigma_1\sigma_2$ will have critical group $\Jac(H_3) \cong \ZZ/12\ZZ$.  Because $\hat{G}=G/D_2$ is a tree, Corollary \ref{C:Klein} tells us that $\Jac(G) \cong \ZZ/3\ZZ \oplus K$, where $K$ is a finite abelian group of order $2^7$.  In fact, an explicit calculation shows that $\Jac(G) \cong \ZZ/3\ZZ \oplus \ZZ/2\ZZ \oplus (\ZZ/8\ZZ)^2$.  We note that this graph is also the circulant graph $C_6^{1,2}$ and as such it also admits an action by the group $D_3$; one can derive the same result by decomposing the graph according to this action and Theorem \ref{T:Equiv}.

\subsection{Graphs with large and small orbits}

In \cite{GM}, we considered the situation where all of the $D_n$-orbits had either $n$ or $2n$ points.  In the language of this paper, that meant that all orbits of Type I or II had index $1$ and orbits of Type III had index either $1$ or $2$.  In particular, Theorem \ref{T:Equiv} then implies that we have that $\Jac(H_1) \oplus \Jac(H_2) \oplus \Jac(H_3)\oplus \ZZ/n\ZZ$ is $2n$-equivalent to the group  $\Jac(G) \oplus \Jac(\hat{G})^2$.  This is equivalent to the conclusions of that paper.

At the other end of the spectrum, if we assume that $G$ is a graph that has a harmonic $D_n$-action such that a point $\omega$  is fixed by the entire action, then the orbit $\{\omega\}$ is a Type I orbit of index $n$.  In particular, the theorem then tells us that $\Jac(H_1) \oplus \Jac(H_2) \oplus \Jac(H_3)$ is $2n$-equivalent to the group  $\displaystyle \Jac(G) \oplus \Jac(\hat{G})^2\bigoplus_{\substack{\O \text{ Type I or II}\\ \O \ne \{\omega\}}}( \ZZ/k_\O\ZZ)$

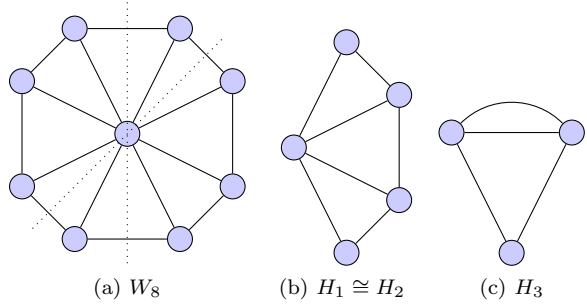
\begin{figure}[!htbp]
\centering
\subfloat[$W_8$]{\begin{tikzpicture}
  [scale=.7,auto=left,every node/.style={state,minimum size=.3cm,fill=blue!20}]
  \node (0) at (0,0) {};
  \node (1) at (1,-2) {};
  \node (2) at (-1,-2)  {};
  \node (3) at (-2,-1)  {};
  \node (4) at (-2,1) {};
  \node (5) at (-1,2) {};
  \node (6) at (1,2) {};
  \node (7) at (2,1) {};
  \node (8) at (2,-1) {};

\foreach \from/\to in {0/1,0/2,0/3,0/4,0/5,0/6,0/7,0/8,1/2,2/3,3/4,4/5,5/6,6/7,7/8,8/1}
    \draw (\from) -- (\to);
    \draw[dotted] (0,2.5) -- (0,-2.5) ;
    \draw[dotted] (-1.8,-1.8) -- (1.8,1.8);

\end{tikzpicture}} \quad
\subfloat[$H_1 \cong H_2$]{
\begin{tikzpicture}
 [scale=.7,auto=left,every node/.style={state,minimum size=.3cm,fill=blue!20}]
  \node (0) at (0,0) {};
  \node (1) at (1,-2) {};
  \node (6) at (1,2) {};
  \node (7) at (2,1) {};
  \node (8) at (2,-1) {};

\foreach \from/\to in {0/1,0/6,0/7,0/8,6/7,7/8,8/1}
    \draw (\from) -- (\to);
\end{tikzpicture}} \quad
\subfloat[$H_3$]{\begin{tikzpicture}
  [scale=.8,auto=left,every node/.style={state,minimum size=.3cm,fill=blue!20}]
  \node (0) at (0,0) {};
  \node (1) at (1,2) {};
  \node (2) at (-1,2)  {};
 \foreach \from/\to in {0/1,0/2,1/2}
    \draw (\from) -- (\to);
 \foreach \from/\to in {1/2}
    \draw (\from) to[bend right=45] (\to);
\end{tikzpicture}}
\caption{The graph $W_8$ and its various quotients}
\label{F:Wheel}
\end{figure}

As an example of a family of graphs with a fixed point, we consider the wheel graphs $W_{2n}$ obtained by starting with a $2n$-cycle and adding a single vertex in the center which is connected to each of the other vertices.  (We illustrate the specific case where $n=4$ in Figure \ref{F:Wheel}) This graph admits a harmonic $D_n$-action where $\sigma_1$ and $\sigma_2$ act by reflections through opposing pairs of edges.  One can see that the quotients $H_1=W_{2n}/\langle \sigma_1 \rangle$ and $H_2=W_{2n}/\langle \sigma_2 \rangle$ are each isomorphic to a `chain' of $n-1$ triangles, and it follows from results in \cite{BG} (or a direct calculation) that $\Jac(H_1) \cong \Jac(H_2) \cong  \ZZ/F_{2n-1}\ZZ$ where $F_k$ is the $k^{th}$ Fibonacci number, defined by the recurrence $F_0=F_1=1$ and $F_k=F_{k-1}+F_{k-2}$ for all $k \ge 2$. Moreover, one can see that $H_3=W_{2n}/\langle \sigma_1\sigma_2\rangle$ consists of a triangle with one edge doubled, and therefore $\Jac(H_3) \cong \ZZ/5\ZZ$.  Because the graph $\hat{G} = W_{2n}/D_n$ is a tree, our theorem therefore implies that $\Jac(W_{2n})$ is $2n$-equivalent to $(\ZZ/F_{2n-1}\ZZ)^2 \oplus \ZZ/5\ZZ$.  In many cases we have that $gcd(5F_{2n-1},2n)=1$, in which case this equivalence is actually an isomorphism.  We note that the case of a wheel graph was previously considered in \cite{Biggs} using different techniques.

\subsection{Square webs}
Consider the family of graphs $SW_n$ given by placing $n$ concentric squares on top of four radial lines, an example of which is  illustrated in Figure \ref{F:SW}.
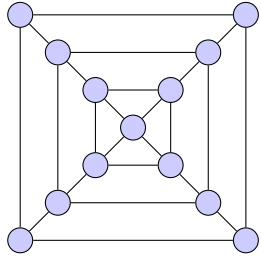
\begin{figure}[!htbp]
\begin{center}
\begin{tikzpicture}
  [scale=.5,auto=left,every node/.style={state,minimum size=0.3cm,fill=blue!20}]
  \node(a3) at (-3,-3) {};
  \node(a2) at (-2,-2){};
  \node(a1) at (-1,-1){};
  \node(b3) at (-3,3) {};
  \node(b2) at (-2,2){};
  \node(b1) at (-1,1){};
  \node(c3) at (3,3) {};
  \node(c2) at (2,2){};
  \node(c1) at (1,1){};
  \node(d3) at (3,-3) {};
  \node(d2) at (2,-2){};
  \node(d1) at (1,-1){};
  \node(x) at (0,0){};
  \foreach \from/\to in {a1/b1,b1/c1,c1/d1,d1/a1,a2/b2,b2/c2,c2/d2,d2/a2,a3/b3,b3/c3,c3/d3,d3/a3,a3/a2,a2/a1,a1/x,b3/b2,b2/b1,b1/x,c3/c2,c2/c1,c1/x,d3/d2,d2/d1,d1/x}
    \draw (\from) -- (\to);
\end{tikzpicture}
\caption{Square Web $SW_3$}
\label{F:SW}
\end{center}
\end{figure}

We will define a harmonic action of the group $(\ZZ/2\ZZ)^2$ on this graph by letting $\sigma_1$ be reflection in the $x$-axis and $\sigma_2$ be reflection in the $y$-axis.   This action will lead to a single fixed point and $n$ non-inertial orbits that have four vertices apiece.  Moreover, it is clear that $SW_n/\langle \sigma_1 \rangle \cong  SW_n/\langle \sigma_2 \rangle$ and $SW_n/(\ZZ/2\ZZ)^2$ is a tree, so corollary \ref{C:Klein} implies that $\Jac(SW_n)$ is $2$-equivalent to $(\Jac(SW_n/\langle \sigma_1 \rangle)^2 \oplus \Jac(SW_n/\langle \sigma_1\sigma_2 \rangle)$.   The graphs $SW_n/\langle \sigma_1 \rangle$ and $SW_n/\langle \sigma_1\sigma_2 \rangle$ can be viewed in Figure \ref{F:SWquot}.

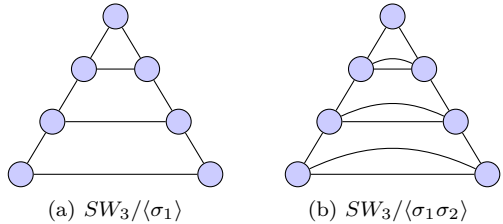
\begin{figure}[!htbp]
\centering
\subfloat[$SW_3/\langle \sigma_1 \rangle$]{\begin{tikzpicture}
  [scale=.7,auto=left,every node/.style={state,minimum size=.3cm,fill=blue!20}]
  \node (x) at (0,0) {};
  \node (a1) at (.6,-1) {};
  \node (a2) at (-.6,-1)  {};
  \node (b1) at (1.2,-2)  {};
  \node (b2) at (-1.2,-2) {};
  \node (c1) at (1.8,-3) {};
  \node (c2) at (-1.8,-3) {};

\foreach \from/\to in {x/a1,x/a2,a1/a2,a1/b1,b1/c1,a2/b2,b2/c2,b1/b2,c1/c2}
   \draw (\from) -- (\to);
\end{tikzpicture}}
\qquad \subfloat[$SW_3/\langle \sigma_1\sigma_2 \rangle$]{
\begin{tikzpicture}
   [scale=.7,auto=left,every node/.style={state,minimum size=.3cm,fill=blue!20}]
  \node (x) at (0,0) {};
  \node (a1) at (.6,-1) {};
  \node (a2) at (-.6,-1)  {};
  \node (b1) at (1.2,-2)  {};
  \node (b2) at (-1.2,-2) {};
  \node (c1) at (1.8,-3) {};
  \node (c2) at (-1.8,-3) {};

\foreach \from/\to in {x/a1,x/a2,a1/a2,a1/b1,b1/c1,a2/b2,b2/c2,b1/b2,c1/c2}
   \draw (\from) -- (\to);

    \foreach \from/\to in {a1/a2,b1/b2,c1/c2}
    \draw (\from) to[bend right=25] (\to);

\end{tikzpicture}}
\caption{The quotient graphs of $SW_3$}
\label{F:SWquot}
\end{figure}

Each of these quotient graphs can be thought of as a chain of polynomials in the sense of \cite{BG}, and in particular that paper shows that each of these critical groups will be cyclic.  More precisely, Theorem 3.1 of that paper implies that $\Jac(SW_n/\langle \sigma_1 \rangle) \cong \ZZ/a_n\ZZ$ and $\Jac(SW_n/\langle \sigma_1\sigma_2 \rangle) \cong \ZZ/b_n\ZZ$, where $a_n$ satisfies the recurrence relation $a_0=1, a_1=3, a_n=4a_{n-1}-a_{n-2}$ for $n \ge 2$ and $b_n$ satisfies the recurrence relations $b_0=1, b_1=5, b_{n}=6b_{n-1}-b_{n-2}$ for $n \ge 2$.  One can easily deduce that all of the $a_n$ and $b_n$ are odd, and we thus obtain that  $\Jac(SW_n) \cong (\ZZ/a_n\ZZ)^2 \oplus \Jac(\ZZ/b_n\ZZ)$.  We note that $a_n$ (resp. $b_n$) is sequence $A001835$ (resp. $A001653$) in the Online Encyclopedia of Integer Sequences\cite{OEIS}, and explicit formulas are given by
\[a_n=\frac{(1+\sqrt{3})(2+\sqrt{3})^n+(1-\sqrt{3})(2-\sqrt{3})^n}{2}\] \[ b_n=\frac{(\sqrt{2}+1)(3+2\sqrt{2})^n+(\sqrt{2}-1)(3-\sqrt{2})^n}{2\sqrt{2}}\]

\subsection{Example where groups are isomorphic}

Let $G$ be the graph in Figure \ref{F:counter}, and let $\sigma_1$ act on the vertices by the permutation $( x \, y )(a \, b)$  and $\sigma_2$ act by the permutation  $( x \, y )(a \, c)$.  Then $\langle \sigma_1,\sigma_2 \rangle \cong D_3$ and $\{x,y\}$ is an orbit of Type III and index $3$ while $\{a,b,c\}$ is an orbit of Type I and index $1$.  One can compute that $\Jac(G/\langle\sigma_1\rangle) \cong \Jac(G/\langle\sigma_2\rangle) \cong \ZZ/2\ZZ$ and see that $G/\langle\sigma_1\sigma_2\rangle$ is a tree so has trivial Jacobian, which also implies that $\Jac(\hat{G})$ is trivial.  Moreover, it is a straightforward calculation to see that $\Jac(G) \cong \ZZ/2\ZZ \oplus \ZZ/6\ZZ$.

\begin{figure}[!htbp]
\begin{center}
\begin{tikzpicture}
  [scale=1.1,auto=left,every node/.style={state,minimum size=0.6cm,fill=blue!20}]
  \node(x) at (-2,0) {$x$};
  \node(y) at (2,0){$y$};
  \node(a) at (0,1){$a$};
  \node(b) at (0,0){$b$};
  \node(c) at (0,-1){$c$};
  \foreach \from/\to in {a/x,a/y,b/x,b/y,c/x,c/y}
    \draw (\from) -- (\to);
\end{tikzpicture}
\caption{Graph with $D_3$ action}
\label{F:counter}
\end{center}
\end{figure}
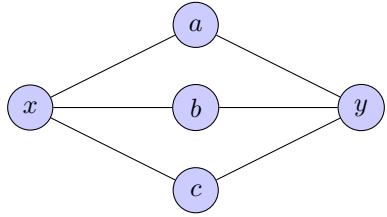

In particular, $\displaystyle \Jac(G) \oplus \Jac(\hat{G})^2\oplus \left(\bigoplus_{\O \text{ Type I or II}}( \ZZ/k_\O\ZZ)\right) \cong \ZZ/2\ZZ \oplus \ZZ/6\ZZ$ and $\Jac(H_1) \oplus \Jac(H_2) \oplus \Jac(H_3) \oplus (\ZZ/n\ZZ) \cong (\ZZ/2\ZZ)^2 \oplus (\ZZ/3\ZZ)$.  As the theorem predicts, these two groups are $6$-equivalent.  In fact, in this case they are isomorphic.

\vskip .2in

\noindent {\bf Acknowledgments}

The author would like to thank Criel Merino for helpful conversations and ideas throughout this project, and the anonymous referees for suggestions that improved the exposition.

\end{document}